\chardef\bslash=`\\ 
\newtheorem{thm}{Theorem}[section]
\newtheorem{cor}[thm]{Corollary}
\newtheorem{lem}[thm]{Lemma}
\newtheorem{prop}[thm]{Proposition}
\theoremstyle{definition}
\newtheorem{defn}{Definition}[section]
\newtheorem{rem}{Remark}[section]
\newtheorem{obs}{Observation}[section]
\theoremstyle{remark}
\newcommand{\eval}[2][\right]{\relax
  \ifx#1\right\relax \left.\fi#2#1\rvert}
\begin{document}
\title{On semisimple $\ell$-modular Bernstein-blocks 
of a $p$-adic general linear group}
\author{David-Alexandre Guiraud\footnote{
Interdisciplinary Center for Scientific Computing,
Heidelberg University, Germany david.guiraud@iwr.uni-heidelberg.de}
}
\maketitle
\vspace{2cm}
\begin{abstract}
Let $G_n=\operatorname{GL}_n(F)$, where $F$ is a non-archimedean local 
field with
residue characteristic $p$. Our starting point is the 
Bernstein-decomposition
of the representation category of $G_n$ over an algebraically closed field
of characteristic $\ell \neq p$ into blocks. 
In level zero, we associate to each block a replacement for the
Iwahori-Hecke algebra which provides a Morita-equivalence just as in 
the complex case. Additionally, we will explain how this gives rise
to a description of 
an arbitrary $G_n$-block in terms of simple $G_m$-blocks (for
$m\leq n$), paralleling the approach of Bushnell and Kutzko in 
the complex setting.
\end{abstract}
\tableofcontents

\section{Introduction}
Consider the group $G = G_n = 
\operatorname{GL}_n(F)$, where $F$ is a local non-archimedean field
with residue characteristic $p$. As part of the Bernstein-decomposition,
it is now a classical result that the category $\mathfrak{R}_{\mathbb{C}}(G)$
of smooth, complex $G$-representations decomposes as
\begin{equation}\label{sdnfoibewogiwbebgfoiwferfajsbviqwe89}
\mathfrak{R}_{\mathbb{C}}(G) = 
\Bigl(\bigoplus_{(\mathscr{P}, \rho)} 
\mathfrak{R}_{\mathbb{C}, (\mathscr{P}, \rho)}
(G)\Bigr)\; \oplus
\text{ positive-level part},
\end{equation}
where
\begin{itemize}
\item $(\mathscr{P}, \rho)$ runs through all equivalence classes
of level-$0$ $G$-types: $\mathscr{P}\subset G$ is
a parahoric subgroup (see Definition \ref{ahfdougbuewgbrfuog398rz327z4237grif})
and $\rho$ a 
$\mathscr{P}$-representation inflated from a supercuspidal 
representation $\overline{\rho}$
of the reductive quotient $\mathscr{P}/\mathscr{P}(1) = \mathcal{M}$. 
Up to equivalence (see Definition \ref{siovnowebogibwoiefhoiwehoifbowie}),
we can assume that $(\mathscr{P}, \rho)$ is in an arranged form, i. e. 
\begin{equation*}
\mathcal{M}= \prod_{i=1}^k
\bigl(\operatorname{GL}_{n_i}(q)\bigr)^{m_i} \text{ and } 
\overline{\rho}= \boxtimes_{i=1}^k (\overline{\rho}_i)^{m_i},
\end{equation*}
where $k, n_i, m_i\in\mathbb{N}$ with $\sum_{i=1}^k n_i m_i = n$ and
each $\overline{\rho}_i$ is a supercuspidal 
$\operatorname{GL}_{n_i}(q)$-representation
with $\overline{\rho}_i\not\cong \overline{\rho}_j$ for $i\neq j$.
\item The subcategory 
$\mathfrak{R}_{\mathbb{C}, (\mathscr{P}, \rho)}(G)$ consists
of representations which have all their irreducible subquotients isomorphic
to subquotients of $\operatorname{ind}_{\mathscr{P}}^G(\rho)$.
\end{itemize}
This reduces the representation theory
of $G$ to an investigation of the blocks 
$\mathfrak{R}_{\mathbb{C}, (\mathscr{P}, \rho)}$. 
Bushnell and Kutzko provided in \cite{bushnell1998smooth}
a Morita-equivalence\footnote{Technically speaking, this is
not a Morita-equivalence as there is just one ring. Anyways, the alternative 
characterisation of (\ref{askvnfoiwebgoiebwiorh3w8i93hbnwiofevbnioho}) as
$\mathscr{H}(G)_{(\mathscr{P}, \rho)} \underset{\text{Morita}}{\cong}
\mathscr{H}(G, \mathscr{P}, \rho)$ 
(with $\mathscr{H}(G)_{(\mathscr{P}, \rho)}$ the 
part of the global Hecke algebra
$\mathscr{H}(G)$ lying in $\mathfrak{R}_{\mathbb{C}, (\mathscr{P}, \rho)}$)
justifies this abuse of notation.}
\begin{equation}\label{askvnfoiwebgoiebwiorh3w8i93hbnwiofevbnioho}
\mathfrak{R}_{\mathbb{C}, (\mathscr{P}, \rho)} \cong
\mathscr{H}(G, \mathscr{P}, \rho)\!-\!\textbf{Mod},
\end{equation}
where $\mathscr{H}(G, \mathscr{P}, \rho) = \operatorname{End}_G\bigl(
\operatorname{ind}_{\mathscr{P}}^G(\rho)\bigr)$ is the Iwahori-Hecke
algebra associated to the type $(\mathscr{P}, \rho)$.
For a
simple type (i. e. with $k=1$), 
they established in \cite{bushnell1993admissible} an isomorphism
\begin{equation}\label{sionweoigbiowebgbweortfbowegbhfouwegbgvof}
\mathscr{H}(G, \mathscr{P}, \rho) \cong 
\mathscr{H}(\operatorname{GL}_{m_1}(F^{n_1}), {I}, 
\mathbb{C}_{\text{triv}}),
\end{equation}
where $F^{n_1}$ is the unramified extension of $F$ with degree $n_1$ and
${I}$ denotes the Iwahori-subgroup of $\operatorname{GL}_{m_1}(F^{n_1})$.
Modules over Hecke algebras of this kind were classified by
Kazhdan-Lusztig and Ginzburg. \\
As a final step, Bushnell and Kutzko decompose in \cite{bushnell1999semisimple}
the Hecke algebra of a general type
as
\begin{equation}\label{asfbnoiwebgfihw0ghoiwegfrbowrbhtguheuigbfiwra}
\mathscr{H}(G, \mathscr{P}, \rho) \cong 
\bigotimes_I \mathscr{H}(G_{m_in_i}, \mathscr{P}_i, \rho_i),
\end{equation}
where all occurring 
pairs $(\mathscr{P}_i, \rho_i)$ are simple $G_{m_in_i}$-types and can henceforth
be treated as in (\ref{sionweoigbiowebgbweortfbowegbhfouwegbgvof}).\vspace{0.3cm}\\
If one replaces the base field $\mathbb{C}$ by some algebraically 
closed field $R$ of positive characteristic $\ell\neq p$, the decomposition
(\ref{sdnfoibewogiwbebgfoiwferfajsbviqwe89}) carries over. 
Although types and Hecke
algebras can still be defined and continue to be an important concept, one
loses the
Morita-equivalence of (\ref{askvnfoiwebgoiebwiorh3w8i93hbnwiofevbnioho}). 
Therefore, a different
concept is needed if one is interested in the structure of the blocks.
The main achievement of this paper is the construction of a 
pair $(\mathscr{P}_{\tt{max}}, \tilde{\rho})$ (called the \textit{supercover}) 
which is a suitable replacement for the type in the sense that it provides
both a Morita-equivalence like (\ref{askvnfoiwebgoiebwiorh3w8i93hbnwiofevbnioho})
and a tensor-decomposition like 
(\ref{asfbnoiwebgfihw0ghoiwegfrbowrbhtguheuigbfiwra}):\\
Let $(\mathscr{P}, \rho)$ be a type given in the arranged form as described
above, then we can form the unique standard parahoric subgroup
$\mathscr{P}_{\tt{max}}$ with reductive quotient 
\begin{equation*}
\mathcal{M}_{\tt{max}} =
\prod_{i=1}^k \operatorname{GL}_{n_im_i}(q).
\end{equation*}
$\mathcal{M}$ is a Levi-subgroup of $\mathcal{M}_{\tt{max}}$ and we
can consider the Harish-Chandra induced 
$\begin{bf}i\end{bf}_{\mathcal{M}}^{\mathcal{M}_{\tt{max}}}(\overline{\rho})$.
Denote by $\Psi$ the set of isomorphism classes of irreducible subquotients
of 
$\begin{bf}i\end{bf}_{\mathcal{M}}^{\mathcal{M}_{\tt{max}}}(\overline{\rho})$.
Any (representative of an) element of $\Psi$ can be written as 
$\boxtimes_{i=1}^k X_i$, where $X_i$ is an irreducible 
representation of $\operatorname{GL}_{n_im_i}(q)$. Any such $X_i$
admits a projective cover $\hat{X}_i$.
Then $\hat{X} \coloneqq \boxtimes_{i=1}^k \hat{X}_i$ can be inflated to a
$\mathscr{P}_{\tt{max}}$-representation
$\tilde{X}$ 
and we set (cf.
Definition
\ref{i3h802hrf8032grt892z4392876432t754t7g})
\begin{equation*}
\tilde{\rho}= \bigoplus_{X\in \Psi} \tilde{X}.
\end{equation*}
In Section \ref{su0938482HDOFH983r09zhf}, we study the 
induced
supercover $\operatorname{ind}^G_{\mathscr{P}_{\tt{max}}}(\tilde{\rho})$.
It follows from certain properties of the Harish-Chandra functor
$\begin{bf}i\end{bf}_{\mathcal{M}}^{\mathcal{M}_{\tt{max}}}$
(collected in Section \ref{eu09w7r34r74zutt6569876iujdsgf}) and a 
special instance of the Mackey-decomposition for parahoric functors
(Section \ref{siodhfioebhgoiweigbewboibrfowe}) that 
$\operatorname{ind}^G_{\mathscr{P}_{\tt{max}}}(\tilde{\rho})$
is a progenerator in 
$\mathfrak{R}_{R, (\mathscr{P}, \rho)}$.
This implies directly the announced Morita-equivalence 
(Theorem \ref{iewoih320h0rth23804u87389435hsdubvuigegt794gsf}):
\begin{equation*}
\mathfrak{R}_{(\mathscr{P}, \rho)}\cong
\mathscr{H}(G, \mathscr{P}_{\tt{max}}, \tilde{\rho})\!-\!\textbf{Mod}
\end{equation*}
The decomposition of $\mathscr{H}(G, \mathscr{P}_{\tt{max}}, \tilde{\rho})$
as a tensor product is technically more involved. The crucial ingredient
is a disjointness argument (Proposition \ref{aishfiowebhgiwbeiogfbiowebfrbweorwebroiweb}) 
based on the work of Dipper, James and Green on (modular) representations
of $\operatorname{GL}_n(q)$.
In Section \ref{io8378492698456328765987sjdvbifwgtr}, we use this 
to first give an upper bound on the intertwining of
\begin{equation*}
\operatorname{Hom}_G\bigl(
\operatorname{ind}^G_{\mathscr{P}_{\tt{max}}}(\tilde{X}),
\operatorname{ind}^G_{\mathscr{P}_{\tt{max}}}(\tilde{Y})\bigr)
\end{equation*}
for arbitrary $X, Y\in \Psi$.
Using the general methods of 
Section \ref{aohfoweiht08zr84z3r849tgr94g0}, we are able to lift this
to an upper bound on the intertwining of 
$\mathscr{H}(G, \mathscr{P}_{\tt{max}}, \tilde{\rho})$ 
(see Theorem \ref{sdoihfoiwhbgfiwebhfgbwei}). This is sufficient
to use an argument of Vign{\'e}ras implying the tensor-decomposition:
Denote by $(\mathscr{K}_i, \tilde{\rho}_i)$ the supercover of
 the simple $\operatorname{GL}_{n_im_i}(F)$-type 
$(\mathscr{P}_i, \rho_i)$, where
\begin{itemize}
\item $\mathscr{K}_i = \operatorname{GL}_{n_im_i}(\mathcal{O})$;
\item $\mathscr{P}_i$ is the unique parahoric subgroup of 
$\operatorname{GL}_{n_im_i}(F)$ with reductive quotient
$\mathcal{M}_i = \bigl(\operatorname{GL}_{n_i}(q)\bigr)^{m_i}$;
\item $\rho_i$ is inflated from the $\mathcal{M}_i$-representation
$\overline{\rho}_i^{m_i}$.
\end{itemize} 
Then we establish
(cf. Theorem \ref{saoihfoih89thg2984gt98234rgh92trg5942rvgb9uvb9q87gwtgb4})
\begin{equation*}
\mathscr{H}(G, \mathscr{P}_{\tt{max}},\tilde{\rho}) 
\underset{\text{Morita}}{\cong}
\bigotimes_I 
\mathscr{H}(\operatorname{GL}_{n_im_i}(F), \mathscr{K}_i, \tilde{\rho}_i).
\end{equation*}
We repeat that
\begin{eqnarray*}
\mathfrak{R}_{(\mathscr{P},\rho)}(G) \cong
\mathscr{H}(G, \mathscr{P}_{\tt{max}},\tilde{\rho})\!-\!\textbf{Mod},
\qquad\qquad\\
\mathfrak{R}_{(\mathscr{P}_i,\rho_i)}\bigr(
\operatorname{GL}_{n_im_i}(F)\bigl) \cong
\mathscr{H}(\operatorname{GL}_{n_im_i}(F), \mathscr{K}_i, \tilde{\rho}_i)\!-\!\textbf{Mod}.
\end{eqnarray*}
This reduces the study of a general 
(called \textit{semisimple}) block
to simple blocks just as Bushnell-Kutzko theory does
in the complex setting. Expressed in sloppy words, 
this tells us that semisimple blocks
are built up from simple ones in the easiest possible way, i. e. all 
`{mysterious}'
things happen in the formation of simple blocks from their
supercuspidal parts. This is reflected by the
fact that cuspidal non-supercuspidal representations (whose existence
is a unique feature of the modular case) can occur only in  
simple blocks\footnote{This follows from a (possibly iterated)
application of \cite{VigLivre}, Claim III.3.15.2.}. Moreover, from 
the definition of the supercover it is clear that all modular complications
in the representation theory of $G$ come from modular complications
in the representation theory of finite linear groups: If for two choices
$\ell, \ell'$ the representation theories of 
$\operatorname{GL}_m(q)$ are identical (and we ask for this to hold 
for all $m\leq n$), 
then the level-$0$ parts of the representation theories of $G$ are identical over $\ell$
and over $\ell'$.\vspace{0.3cm}\\
In Section \ref{ewihfih08ghtf832gbtfug32b984g732g4rq09t},
we demonstrate how our technique can be used to study the smallest non-trivial 
example: 
\begin{itemize}
\item $G= \operatorname{GL}_2(\mathbb{Q}_p)$;
\item $(I, \rho)$ with $I$ the Iwahori subgroup
and $\rho$ inflated from 
a $\mathbb{F}_p^{\times}\times \mathbb{F}_p^{\times}$-character
$\overline{\rho}_1\otimes
\overline{\rho}_2$ with $\overline{\rho}_1 \not\cong
\overline{\rho}_2$.
\end{itemize}
There is a subgroup 
$(\mathbb{Z}_p^{\times})^{(\ell)}$
of $\mathbb{Z}_p^{\times}$ with pro-order prime to $\ell$ such
that the quotient is a finite $\ell$-group. We can define
the subgroup 
\begin{equation*}
I^{(\ell)} = \begin{pmatrix}
(\mathbb{Z}_p^{\times})^{(\ell)} & \mathbb{Z}_p \\
\mathfrak{P} & (\mathbb{Z}_p^{\times})^{(\ell)}
\end{pmatrix}\subset I
\end{equation*}
and prove
\begin{equation*}
\mathfrak{R}_{(I, \rho)}(G) \cong
\mathscr{H}(G, I^{(\ell)}, \rho|I^{(\ell)})\!-\!\textbf{Mod}.
\end{equation*}
Additionally, we get
\begin{equation*}
\mathscr{H}(G, I^{(\ell)}, \rho|I^{(\ell)}) 
\cong \bigotimes_{\text{two copies}} R\left[\mathbb{Q}_p/
(\mathbb{Z}_p^{\times})^{(\ell)}\right],
\end{equation*}
where the category of modules over 
$R\left[\mathbb{Q}_p/
(\mathbb{Z}_p^{\times})^{(\ell)}\right]$ is equivalent to the unipotent
block (i. e. the block containg the trivial representation) of
$\mathfrak{R}\bigl(\operatorname{GL}_1(F)\bigr)$.\vspace{0.3cm}\\
It is desireable to generalise results about $\operatorname{GL}_n(F)$
to arbitrary reductive $p$-adic groups. In our case, the first
obstacle would be the Bernstein-decomposition which gets more complicated
(cf. Thm. III.6 in \cite{vignéras1998induced}). Although it might still
be possible
to define the supercover in some more general situations, it is ultimately
the use of certain results on $\operatorname{GL}_n(q)$ (which are not
available in greater generality) what limits our techniques
to the situation studied in this paper.
This doesn't come as a surprise, given that in the complex case
the Bushnell-Kutzko results can be generalised 
neither easily nor completely to reductive groups. 
Another question is whether there are 
connections between the supercover of a simple type and the
supercover of $(I, R_{\text{triv}})$ for some other group, paralleling
(\ref{sionweoigbiowebgbweortfbowegbhfouwegbgvof}) in the complex case.
Together with the task of generalising the presented results
to positive-level blocks, this poses 
interesting topics for future research.\vspace{0.3cm}\\
This work was partly inspired
by conversations with Marie-France Vign{\'e}ras. 
The major part of
the research was conducted during a visiting
stay at Bar-Ilan University.
The author wants to thank Michael Schein for hosting this stay, for his
valuable support and for his comments on this paper.
The author wants to thank Maarten Solleveld
for comments and corrections on this paper,
leading (among other things) to the present formulation of Proposition
\ref{snvoiwboivgbeifbgeiwbgvbdwoigvboweobiiojsdbvajb}.v.

\section{Preliminaries}\label{weivoiwebhgiohwrohguagberfbwisdaifvpbwgf}
Fix two prime numbers $p\neq \ell$. Let $F$ be a local 
non-archimedean field with 
ring of integers $\mathcal{O}$, some fixed uniformiser $\varpi$, 
maximal ideal $\mathfrak{P}= \varpi\mathcal{O}$ and 
residue field $k \cong \mathbb{F}_q$, where $q$ is some power
of $p$.  
Moreover, let $R$ be an algebraically closed field of characteristic
$\ell$ such that $R$ arises as residue field in some \mbox{$\ell$-modular}
system $(R, \mathcal{O}_K, K)$.\vspace{0.15cm}\\
The group $G_n = \operatorname{GL}_n(F)$ inherits a topology from
$F$ 
and provides an example of what
one calls a locally profinite group (cf. \cite{VigLivre}). 
\begin{defn}[Smooth representation]
An $R$-valued representation $(V, \pi)$ of a locally 
profinite group $G$ is called smooth
if we can find for any $v\in V$ an open subgroup $K\subset G$ which
acts trivially on $v$. Together with $G$-equivariant
linear maps, smooth $G$-representations
define a category denoted by $\mathfrak{R}(G)$.
\end{defn}
Depending on the context, we will refer to a representation $(V, \pi)$
by simply writing $V$ or $\pi$. From now on, we will only be concerned
with $R$-valued smooth representations. It is a basic observation (cf.
\cite{VigLivre}) 
that $\mathfrak{R}(G)$ is equivalent to the category of modules 
over the global Hecke algebra
\begin{equation*}
\mathscr{H}(G) = \{f: G\rightarrow R \,|\, f \text{ locally constant
and compactly supported}\}
\end{equation*}
where multiplication is defined as convolution with respect
to some chosen Haar measure on $G$ (see \cite{VigLivre}, I.3.1). This parallels
the interpretation of representations of a finite group as modules over
the group algebra.
\begin{defn}[Induction with compact support]
Let $H\subset G$ be a closed subgroup of a locally profinite group 
and consider an $H$-representation $(V,\pi)$. Define 
$\operatorname{ind}_H^G(\pi)_{\ast}$ as the space
\begin{equation*}
\{f:G\rightarrow V \,|\, f(hg) = \pi(h)f(g)
\,\forall h\in H, g\in G, f \text{ compactly supported mod-$H$}\}.
\end{equation*}
The $G$-action $f\overset{g}{\longmapsto} f(\textvisiblespace \,g)$
allows us to consider $\operatorname{ind}_H^G(\pi)_{\ast}$ as a (possibly 
not smooth) $G$-representation. Then define
$\operatorname{ind}_H^G(\pi)$ as the biggest subrepresentation
of $\operatorname{ind}_H^G(\pi)_{\ast}$ which is smooth. 
(This last step is trivial if $H$ happens to be open.)
\end{defn}
There is an obvious way how $\operatorname{ind}_H^G$ acts on arrows,
allowing us to view induction with compact support as a functor
$\mathfrak{R}(H)\rightarrow \mathfrak{R}(G)$. 
If $H$ is open, this functor corresponds (by \cite{VigLivre}, I.5.2) to
\begin{equation}\label{aosfoibwefgbghb3htf08iwh48itfh0328htgb02}
\mathscr{H}(H)\!-\!\textbf{Mod} \longrightarrow 
\mathscr{H}(G)\!-\!\textbf{Mod}\qquad
M \mapsto \mathscr{H}(G)\otimes_{\mathscr{H}(H)}M.
\end{equation}
Let us collect the basic properties:
\begin{prop} \label{aioshoihbfohgbweoifgboiubosdbfoasbf}
Let $H$ be a closed subgroup of $G$ and denote the
restriction of a $G$-representation to an $H$-representation by
$\operatorname{res}_H^G$. Then
\renewcommand{\labelenumi}{\roman{enumi})}\label{aisfhoiwehbgoibweiogbh}
\begin{enumerate}
\item Both $\operatorname{res}_H^G$ and $\operatorname{ind}_H^G$
commute with direct sums;
\item We have the following adjointness properties: Let $V\in \mathfrak{R}(G)$
and $W\in \mathfrak{R}(H)$, then
\begin{equation*}
\operatorname{Hom}_G\bigl(\operatorname{ind}_H^G(W), V\bigr) \cong
\operatorname{Hom}_H\bigl(W, \operatorname{res}_H^G(V)\bigr)
\text{ if $H$ is open in $G$}
\end{equation*}
and
\begin{equation*}
\operatorname{Hom}_G\bigl(V,\operatorname{ind}_H^G(W)\bigr) \cong
\operatorname{Hom}_H\bigl(\operatorname{res}_H^G(V),W\bigr)
\text{ if $H$ is co-compact in $G$;}
\end{equation*}
\item If $H$ is open in $G$, the functor $\operatorname{ind}_H^G$ 
respects the properties 
`{cyclic}', `finitely generated' and `{projective}';
\item Let both $H_1, H_2$ be open subgroups, then we have a Mackey-decomposition
\begin{equation*}
\operatorname{res}_{H_2}^G\circ \operatorname{ind}_{H_1}^G
\cong \bigoplus_{g\in H_1\!\backslash G/H_2} \operatorname{ind}_{H_2\cap
H_1^{g^{-1}}}^{H_2} \circ \operatorname{Int}(g)\circ
\operatorname{res}_{H_1\cap
H_2^{g}}^{H_1};
\end{equation*}
\item The functor $\operatorname{res}_H^G$ is exact; 
Assume there exists an open compact subgroup $K^{\ast}\subset G$ whose
pro-order is different from zero in $R$. Then $\operatorname{ind}_H^G$
is exact as well.
\end{enumerate}
\end{prop}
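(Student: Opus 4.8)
The plan is to dispatch the five assertions in order of increasing difficulty: all of them except the exactness of $\operatorname{ind}_H^G$ in~(v) are either formal or follow from the adjunctions in~(ii), and that one step I expect to be the main obstacle. Assertion~(i) and the exactness of $\operatorname{res}_H^G$ in~(v) are immediate: a complex of smooth representations is exact in $\mathfrak{R}(G)$ exactly when it is exact on underlying $R$-modules, a property $\operatorname{res}_H^G$ preserves, and restriction visibly commutes with direct sums. For $\operatorname{ind}_H^G$ and direct sums, a smooth $f\colon G\to\bigoplus_iV_i$ which is compactly supported mod $H$ is fixed on the right by some open subgroup and supported on finitely many double cosets, hence is determined by finitely many vectors $f(g)$ and so takes values in a finite partial sum $\bigoplus_{i\in S}V_i$; thus $\operatorname{ind}_H^G\bigl(\bigoplus_iW_i\bigr)=\bigoplus_i\operatorname{ind}_H^G(W_i)$.

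For~(ii) in the open case I would exhibit mutually inverse maps: $\phi\in\operatorname{Hom}_G(\operatorname{ind}_H^G W,V)$ goes to $w\mapsto\phi(f_w)$, where $f_w$ is supported on $H$ with $f_w(h)=\pi(h)w$, while $\psi\colon W\to\operatorname{res}_H^G V$ goes to $f\mapsto\sum_{Hg\in H\backslash G}g^{-1}\psi(f(g))$, a finite well-defined sum by the support and transformation conditions; checking $G$-equivariance and that the two assignments are inverse (using $f=\sum_{Hg}g^{-1}f_{f(g)}$) is routine. In the co-compact case the key observation is that when $H\backslash G$ is compact the support condition is vacuous, so $\operatorname{ind}_H^G W$ is simply the smooth part of the space of all $f\colon G\to V$ with $f(hg)=\pi(h)f(g)$; then $\phi\mapsto\bigl(v\mapsto\phi(v)(e)\bigr)$ (evaluation at the identity) is the asserted map $\operatorname{Hom}_G(V,\operatorname{ind}_H^G W)\to\operatorname{Hom}_H(\operatorname{res}_H^G V,W)$, with inverse $\psi\mapsto\bigl(v\mapsto(g\mapsto\psi(gv))\bigr)$, which lands in the smooth part precisely because $v$ is smooth.

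Assertion~(iii) I would reduce to~(ii) and transitivity of compact induction $\operatorname{ind}_H^G\circ\operatorname{ind}_K^H\cong\operatorname{ind}_K^G$. For `projective', $\operatorname{Hom}_G(\operatorname{ind}_H^G P,-)\cong\operatorname{Hom}_H(P,\operatorname{res}_H^G(-))$ is a composite of the exact functors $\operatorname{res}_H^G$ and $\operatorname{Hom}_H(P,-)$, hence exact. For `cyclic', if $W$ is generated over $\mathscr{H}(H)$ by $w$, fixed by an open compact $K\subset H$, then $w\in W^K$ corresponds under~(ii) to a surjection $\operatorname{ind}_K^H(R)\twoheadrightarrow W$ (with $R$ the trivial $K$-representation), and applying $\operatorname{ind}_H^G$ (exact for open $H$, being a sum over cosets) presents $\operatorname{ind}_H^G W$ as a quotient of the cyclic module $\operatorname{ind}_K^G(R)$; `finitely generated' then follows with~(i). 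Assertion~(iv) is the routine open-subgroup Mackey argument: split $f\in\operatorname{ind}_{H_1}^G W$ into its finitely many pieces supported on single double cosets $H_1gH_2$ and identify the $H_2$-subrepresentation of functions supported on $H_1gH_2$ with $\operatorname{ind}_{H_2\cap H_1^{g^{-1}}}^{H_2}\operatorname{Int}(g)\operatorname{res}_{H_1\cap H_2^g}^{H_1}W$ via $f\mapsto(h_2\mapsto f(gh_2))$, the smoothness and support conditions matching on the two sides.

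The remaining point, the exactness of $\operatorname{ind}_H^G$ in~(v) for $H$ merely closed, is the one I expect to be the main obstacle. I would factor $\operatorname{ind}_H^G$ as $(-)^{\infty}\circ\operatorname{ind}_H^G(-)_{*}$, where $(-)^{\infty}$ is the largest-smooth-subrepresentation functor. Then $\operatorname{ind}_H^G(-)_{*}$, viewed as a functor to the category of all $R[G]$-modules, is exact: along a surjection $W\twoheadrightarrow W''$ one lifts a function pointwise on a set of coset representatives for $H\backslash G$, sending $0$ to $0$, which keeps the support compact mod $H$, and injectivity and exactness in the middle are clear; moreover $(-)^{\infty}$ is left exact as a subfunctor of the identity, so $\operatorname{ind}_H^G$ is left exact with no hypothesis. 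The hypothesis enters only in the surjectivity of $\operatorname{ind}_H^G W\to\operatorname{ind}_H^G W''$: a smooth, compactly-supported $\bar f\in\operatorname{ind}_H^G W''$ is fixed on the right by an open compact $K'$, which after replacing $K'$ by $K'\cap K^{\ast}$ we may take to have pro-order invertible in $R$; now $\bar f$ is a finite sum of its pieces supported on single double cosets $Hg_iK'$, and such a piece is determined by its value at $g_i$, an element of $(W'')^{L_i}$ with $L_i=H\cap g_iK'g_i^{-1}$ profinite of pro-order dividing that of $K'$, hence prime to $\ell$; therefore the invariants functor $(-)^{L_i}$ is exact on smooth $L_i$-modules (average over a finite quotient of $L_i$), the value at $g_i$ lifts to $(W)^{L_i}$, and reassembling the lifted pieces yields the required preimage.
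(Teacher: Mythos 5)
Your proposal is correct. Note, however, that the paper itself gives essentially no argument here: it cites Chapter I.5 of Vign\'eras's book for everything except the `finitely generated' part of (iii), which it deduces from the identification of $\operatorname{ind}_H^G$ (for open $H$) with $\mathscr{H}(G)\otimes_{\mathscr{H}(H)}(-)$, a right-exact functor carrying generators to generators. Your write-up supplies the standard arguments underlying that reference in full, and they all check out; in particular your treatment of the only non-formal point --- surjectivity of $\operatorname{ind}_H^G$ for merely closed $H$, via the factorisation through $\operatorname{ind}_H^G(-)_{*}$ and $(-)^{\infty}$, decomposition of the support into finitely many double cosets $Hg_iK'$, and exactness of $(-)^{L_i}$-invariants for the profinite groups $L_i=H\cap g_iK'g_i^{-1}$ of pro-order invertible in $R$ --- is precisely the mechanism behind the cited result. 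The one place where your route visibly diverges from the paper's is the `finitely generated' claim of (iii): the paper's tensor-product observation is a one-liner, whereas you reduce to the cyclic case via Frobenius reciprocity for $\operatorname{ind}_K^H$ and transitivity of induction; both are valid, and yours has the advantage of not presupposing the module-theoretic description of compact induction. Two small points worth making explicit if this were written out: in (i), for closed $H$ the finiteness of the set of double cosets meeting the support uses compactness of the image of the support in $H\backslash G$ together with openness of the cosets $gK'$; and in (v), the replacement of $K'$ by $K'\cap K^{\ast}$ is legitimate because the pro-order of $K^{\ast}$ equals $[K^{\ast}:K'\cap K^{\ast}]$ times that of $K'\cap K^{\ast}$, so invertibility passes to the open subgroup.
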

\begin{proof}
The only claim not literally taken from Chapter I.5 of \cite{VigLivre} is the 
`finitely generated'-part of iii), but this is clear from the characterisation
(\ref{aosfoibwefgbghb3htf08iwh48itfh0328htgb02}).
\end{proof} 
\begin{rem}
In all cases we are interested in (i. e. $G$ a linear algebraic
$p$-adic group and $\ell\neq p$), the assumption of part v) is fulfilled,
see \cite{meyer2009resolutions}, Lemma 1.1.
\end{rem}

\subsection{Representations of direct products}
In this section we will deal with the connections between representations
of $G$, $G'$ and $G\times G'$, where $G, G'$ are two groups. The basic tool
will be
\begin{defn}[Outer tensor product]
Let $V$ be a representation of $G$ and $V'$ be a representation of $G'$.
The group $G\times G'$ acts on the space $V\otimes_R V'$ by linear
continuation of the rule
\begin{equation*}
(g,g')\,v\otimes v' \coloneqq gv\otimes g'v'.
\end{equation*}
The resulting representation of $G\times G'$ is called the outer
tensor product and referred to by the symbol $V\boxtimes V'$.
\end{defn}
In the two following propositions, by `linear group' we mean a direct
product of finitely many general linear groups. 
\begin{prop}\label{eohiwohgieoh3bi0hr048ihri24tih08rtfhgurfpobu94g}
The following gives a description of the outer tensor product which 
generalises to finitely many groups and tensor factors:
\renewcommand{\labelenumi}{\roman{enumi})}
\begin{enumerate}
\item Let $G, G'$ be finite groups, then there is an isomorphism of algebras
\begin{equation*}
R[G]\otimes_R R[G'] \cong R[G\times G']
\end{equation*}
and under this characterisation $V\boxtimes V'$ corresponds to the
space $V\otimes V'$ on which $R[G]\otimes_R R[G']$ acts by linear
continuation of the rule
\begin{equation*}
(f\otimes f')\cdot(v\otimes v') \coloneqq (f\cdot v) \otimes (f'\cdot v').
\end{equation*}
\item Let $G, G'$ be two linear $p$-adic groups and $V$ a (smooth)
$G$-representation and $V'$ a (smooth) $G'$-representation. Then 
$V\boxtimes V'$ is smooth and
there
is an isomorphism of algebras
\begin{equation*}
\mathscr{H}(G) \otimes_R \mathscr{H}(G') \cong \mathscr{H}(G\times G').
\end{equation*}
Under this characterisation, $V\boxtimes V'$ corresponds to the
space $V\otimes V'$ on which 
$\mathscr{H}(G) \otimes_R \mathscr{H}(G')$ acts by linear
continuation of the rule
\begin{equation*}
(\varphi \otimes \psi) \ast (v\otimes v') \coloneqq 
(\varphi \ast v) \otimes (\psi\ast v').
\end{equation*}
\end{enumerate}

\end{prop}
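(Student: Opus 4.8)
The plan is to treat the two parts as two instances of the same underlying fact, proved once for finite groups by elementary algebra and once for locally profinite groups by a density/convolution argument. For part i), I would first recall that $R[G\times G']$ has the set $\{(g,g')\}$ as an $R$-basis, and that the assignment $(g,g')\mapsto g\otimes g'$ extends $R$-bilinearly to a linear isomorphism $R[G]\otimes_R R[G']\xrightarrow{\sim}R[G\times G']$; checking it is a ring homomorphism is the routine computation $(g\otimes g')(h\otimes h')=gh\otimes g'h'\mapsto(gh,g'h')=(g,g')(h,h')$, which I would state but not grind through. Under this identification, verifying that $V\boxtimes V'$ corresponds to $V\otimes_R V'$ with the stated action is immediate on the basis elements $f=g$, $f'=g'$: $(g\otimes g')\cdot(v\otimes v')=gv\otimes g'v'=(g,g')\,(v\otimes v')$, and linear continuation gives the general rule; since both actions are $R$-linear in $f\otimes f'$ this suffices.

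For part ii), I would first check smoothness of $V\boxtimes V'$: given $v\in V$ and $v'\in V'$, smoothness of $V$ and $V'$ supplies open subgroups $K\subset G$, $K'\subset G'$ fixing $v$ and $v'$ respectively, and then $K\times K'$ is open in $G\times G'$ and fixes $v\otimes v'$; a general element of $V\otimes_R V'$ is a finite sum of such elementary tensors, so a finite intersection of such open subgroups works. Next I would produce the algebra isomorphism $\mathscr{H}(G)\otimes_R\mathscr{H}(G')\cong\mathscr{H}(G\times G')$. The natural map sends $\varphi\otimes\psi$ to the function $(g,g')\mapsto\varphi(g)\psi(g')$ on $G\times G'$; this lands in $\mathscr{H}(G\times G')$ because a product of locally constant compactly supported functions is again locally constant with compact support, and — fixing the product Haar measure on $G\times G'$ — one checks on the convolution formula that it is an algebra homomorphism:
\begin{equation*}
\bigl((\varphi_1\otimes\psi_1)\ast(\varphi_2\otimes\psi_2)\bigr)(g,g')
=\int_{G\times G'}\varphi_1(h)\psi_1(h')\,\varphi_2(h^{-1}g)\psi_2(h'^{-1}g')\,dh\,dh',
\end{equation*}
which by Fubini factors as $(\varphi_1\ast\varphi_2)(g)\,(\psi_1\ast\psi_2)(g')$. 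For surjectivity (and injectivity) I would argue that functions of the form $\sum_i\varphi_i\otimes\psi_i$ already exhaust $\mathscr{H}(G\times G')$: any locally constant compactly supported $f$ on $G\times G'$ is, by compactness of its support, supported on finitely many cosets $g_jK\times g'_jK'$ for suitable open compact $K,K'$ on which it is constant, and each indicator $\mathbf{1}_{g_jK\times g'_jK'}=\mathbf{1}_{g_jK}\otimes\mathbf{1}_{g'_jK'}$ is in the image; linear independence of such indicators gives injectivity. Finally, the compatibility of $V\boxtimes V'$ with this identification follows exactly as in part i): it is enough to check the rule $(\varphi\otimes\psi)\ast(v\otimes v')=(\varphi\ast v)\otimes(\psi\ast v')$ on the generators $\varphi=\mathbf{1}_{gK}$, $\psi=\mathbf{1}_{g'K'}$ (with $K,K'$ small enough to fix $v,v'$ and normalized so the relevant measures of $K,K'$ are invertible in $R$), where both sides reduce to a scalar multiple of $gv\otimes g'v'$, and then extend by $R$-linearity in $\varphi\otimes\psi$.

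The generalisation to finitely many factors is then a trivial induction, associativity of $\otimes_R$ and of $\boxtimes$ making the bracketing immaterial. The one point that requires a little care — and which I would flag as the main technical obstacle — is the surjectivity/spanning step for $\mathscr{H}(G\times G')$ in part ii): one must genuinely use local profiniteness (existence of a neighbourhood basis of the identity by open compact subgroups, and compactness of supports) to reduce an arbitrary element to a finite $R$-combination of box-indicators, rather than appealing to any completed tensor product. Once that density statement is in hand, everything else is formal manipulation of the convolution integral and linear continuation from generators.
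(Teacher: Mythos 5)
Your proposal is correct and follows essentially the same route as the paper: the paper cites Karpilovsky for part i) and, for part ii), uses exactly the same map $\varphi\otimes\psi\mapsto\bigl[(g,g')\mapsto\varphi(g)\psi(g')\bigr]$, delegating well-definedness and bijectivity to a reference and the convolution compatibility to a Fubini-style theorem. You merely supply in full the spanning/injectivity and Fubini details that the paper outsources, so there is no substantive difference in approach.
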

\begin{proof}For part i) we refer to Chapter 2.6 of
\cite{karpilovsky1990induced}. \\
Considering part ii), we first mention
that it is straight-forward to check that $V\boxtimes V'$ is smooth.
The isomorphism of the Hecke algebras is provided by linear
continuation of the rule
\begin{equation*}
\varphi\otimes \psi \mapsto \bigl[ (g, g') \mapsto \varphi(g)\cdot \psi(g')
\bigr].
\end{equation*}
This assignment provides a well-defined and bijective map, 
as shown in \cite{guiraud}, Prop 4.5.5. It follows from a standard
Fubini-style theorem (see \cite{guiraud}, Thm. 1.4.14 and Remark 2.5.11) that
this map commutes with the $\ast$-multiplication. 
The remaining claim is checked readily using the definitions.
\end{proof}
We repeat that, when dealing with $p$-adic groups, we will assume that all
representations under consideration are smooth without mentioning this 
each and every time. Let us collect some formal properties:
\begin{prop}\label{snvoiwboivgbeifbgeiwbgvbdwoigvboweobiiojsdbvajb}
Consider $G$-representations $V, V_i, W$ and $G'$-representations 
$V', W'$, where $G$ and $G'$ are either both finite or both $p$-adic
linear groups. Then:
\renewcommand{\labelenumi}{\roman{enumi})}
\begin{enumerate}
\item The outer tensor product is distributive:
\begin{equation*}
(V\oplus W) \boxtimes V' = V\boxtimes V' \oplus W\boxtimes V'
\end{equation*} 
and analogously in the second variable;
\item $V\boxtimes V'$ is cyclic if both $V$ and $V'$ are cyclic (and
this is also true if we replace `{cyclic}' by 
`{finitely generated}' or by `finitely generated and projective');
\item If we have a sequence of $G$-modules
\begin{equation*}
\cdots \rightarrow V_{j-1} \rightarrow V_j \rightarrow V_{j+1} 
\rightarrow \cdots
\end{equation*}
which is exact at $j$, then so is the sequence
\begin{equation*}
\cdots \rightarrow V_{j-1}\boxtimes V'  
\rightarrow V_j\boxtimes V' \rightarrow V_{j+1}\boxtimes V'
\rightarrow \cdots
\end{equation*}
at $j$ for any $V'$ (as a sequence of $G\times G'$-modules).
The same is true if we fix the $G$-factor and take
a sequence of $G'$-representations.
\item 
Assume that $V$ and $V'$ are finite-length, then 
define $\Gamma$ to be the set of all decomposition factors (up to
isomorphism) of $V$ and $\Gamma'$ analogously for $V'$, 
then $V\boxtimes V'$ is finite-length and every subquotient
is isomorphic to $Q\boxtimes Q'$ for suitable 
$Q\in \Gamma, Q'\in \Gamma'$; 
\item Let $f:V \rightarrow V'$ and 
$f':W\rightarrow W'$ be surjective maps, then so is
the induced map $f\boxtimes f': V \boxtimes W \rightarrow V'\boxtimes W'$;
\item Formation of the outer tensor product provides a bijection
\begin{equation*}
\operatorname{Irr}_R(G) \times \operatorname{Irr}_R(G')
\;\overset{1:1}{\longleftrightarrow}\; \operatorname{Irr}_R(G\times G').
\end{equation*}
\end{enumerate}
The obvious analogues of these assertions hold if we take $G, G', G'', \ldots$
some finite collection of groups.
\end{prop}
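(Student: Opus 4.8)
The plan is to reduce every assertion to an elementary statement about $R$-vector spaces by means of Proposition~\ref{eohiwohgieoh3bi0hr048ihri24tih08rtfhgurfpobu94g}. That proposition identifies the underlying space of $V\boxtimes V'$ with $V\otimes_R V'$, the acting algebra with $R[G]\otimes_R R[G']$ in the finite case and with $\mathscr{H}(G)\otimes_R\mathscr{H}(G')$ in the $p$-adic case, and the action with $(\varphi\otimes\psi)\cdot(v\otimes v')=(\varphi\cdot v)\otimes(\psi\cdot v')$. Since $R$ is a field, the functor $-\otimes_R V'$ is exact, carries surjections to surjections, and commutes with the formation of kernels and images, all compatibly with the above action. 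This settles i) (distributivity of $\otimes_R$, plus the trivial check that the two $G\times G'$-actions agree), iii) (kernels and images of $G$-maps are read off on underlying spaces, where $-\otimes_R V'$ and $Q\otimes_R-$ preserve exactness), and v) (on underlying spaces $f\boxtimes f'=f\otimes f'$, and the tensor product of two surjections is surjective).

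For ii) I would argue with the action formula: if $\mathscr{H}(G)\cdot v=V$ and $\mathscr{H}(G')\cdot v'=V'$, then the submodule generated by $v\otimes v'$ contains every $(\varphi\cdot v)\otimes(\psi\cdot v')$ and hence equals $V\otimes_R V'$; tensoring finite generating sets settles the finitely generated case. For the finitely generated projective case one passes to building blocks: in the finite case a finitely generated projective is a direct summand of a free module, and $R[G]^{\,a}\boxtimes R[G']^{\,b}\cong R[G\times G']^{\,ab}$ by i) and the algebra isomorphism, so $V\boxtimes V'$ is again a summand of a free module; in the $p$-adic case every finitely generated projective object of $\mathfrak{R}(G)$ is a direct summand of $\operatorname{ind}_K^G(R)^{\oplus n}$ for a suitable compact open $K$ (by Frobenius reciprocity, the invertibility of pro-orders recorded in the Remark after Proposition~\ref{aioshoihbfohgbweoifgboiubosdbfoasbf}, and part iii) of that proposition), so the claim follows from $\operatorname{ind}_K^G(R)\boxtimes\operatorname{ind}_{K'}^{G'}(R)\cong\operatorname{ind}_{K\times K'}^{G\times G'}(R)$, which is immediate on underlying spaces.

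Part vi) for finite groups over an algebraically closed field is classical, and can be quoted just as part i) of Proposition~\ref{eohiwohgieoh3bi0hr048ihri24tih08rtfhgurfpobu94g} was; for $p$-adic linear groups it is likewise known (cf. \cite{VigLivre} and \cite{guiraud}), the essential inputs being Schur's lemma for irreducible smooth representations over an algebraically closed field (whence $Q\boxtimes Q'$ is irreducible) and the reduction of surjectivity to the fact that an irreducible smooth $G\times G'$-representation has nonzero $K\times K'$-fixed vectors for suitable compact open $K,K'$ and thus corresponds to an irreducible $\mathscr{H}(G,K)\otimes_R\mathscr{H}(G',K')$-module. Granting vi), part iv) follows by iterating iii): a composition series of $V$ gives, after applying $-\boxtimes V'$, a finite filtration of $V\boxtimes V'$ with subquotients $Q\boxtimes V'$ for $Q\in\Gamma$; refining each such subquotient by a composition series of $V'$ and using that $Q\boxtimes-$ is exact produces a finite filtration whose subquotients are the $Q\boxtimes Q'$ with $Q\in\Gamma$, $Q'\in\Gamma'$, each irreducible by vi). Hence $V\boxtimes V'$ has finite length and every composition factor is of the asserted form.

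Finally, the closing remark about a finite collection $G,G',G'',\dots$ is handled by induction on the number of factors, using the associativity isomorphism $(V\boxtimes V')\boxtimes V''\cong V\boxtimes(V'\boxtimes V'')$ (again visible on underlying spaces together with the algebra isomorphisms) and the evident stability of i)--vi) under this bracketing. I expect the only genuinely non-formal ingredient to be the bijectivity asserted in vi) in the $p$-adic setting, and, to a much lesser extent, the structural description of finitely generated projective smooth representations invoked in ii); everything else amounts to transporting elementary linear algebra through Proposition~\ref{eohiwohgieoh3bi0hr048ihri24tih08rtfhgurfpobu94g}.
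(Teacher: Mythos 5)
Your proposal is correct and follows essentially the same route as the paper: transport everything through the algebra isomorphism of Proposition~\ref{eohiwohgieoh3bi0hr048ihri24tih08rtfhgurfbu94g} (more precisely, Proposition~\ref{eohiwohgieoh3bi0hr048ihri24tih08rtfhgurfpobu94g}), reduce i), iii), v) to linear algebra over the field $R$, realise a finitely generated projective as a direct summand of a ``free'' object so that ii) follows from i), and quote the literature for vi). The only differences are cosmetic: you prove the cyclic and finitely generated cases of ii) directly from the action formula where the paper deduces them from v), you use summands of $\operatorname{ind}_K^G(R)^{\oplus n}$ in the $p$-adic projective case where the paper uses free $\mathscr{H}(G)$-modules via Bourbaki, and you spell out the filtration argument for iv) that the paper compresses into ``direct consequence of iii)''.
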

\begin{proof}
Regarding i), the map given by linear continuation of
\begin{equation*}
f: (v,w)\otimes v' \mapsto (v\otimes v', w\otimes v')
\end{equation*}
is known to provide an isomorphism of vector spaces. 
$f$ is obviously $G\times G'$-equivariant.\\
From now on, write $R_G$ for $R[G]$ (if $G$ is finite) or 
$\mathscr{H}(G)$ (if $G$ is $p$-adic). The 
`{cyclic}'- and `{finitely generated}'-claims of ii) will follow
immediately from v). So let both $V$ and $V'$ be
finitely generated and projective. Then, by 
Chapter II, Paragraph 2, no. 2, Cor. to Prop. 4 of \cite{Bourbaki}, 
$V\oplus Q = R_G^m$ and
$V'\oplus Q' = R_{G'}^{m'}$, for a $G$-representation $Q$, 
a $G'$-representation $Q'$ and two numbers $m, m'\in \mathbb{N}$.
Then 
\begin{equation*}
(V\boxtimes V') \oplus \bigl( (V\boxtimes Q') \oplus (V'\boxtimes Q) \oplus
(V'\boxtimes Q')\bigr) = R_G^m \otimes R_{G'}^{m'} = R_{G\times G'}^{m
m'},
\end{equation*}
hence $V\boxtimes V'$ is finitely generated and projective.\\
The proof for iii) is analogous to i): This claim is true 
for $R$-vector spaces and the occurring maps are easily seen to be 
$G\times G'$-equivariant. iv) is a direct consequence of iii).
For v), it is straight-forward to construct a pre-image for
any element in $V'\otimes W'$. vi) 
follows by putting together Proposition 
\ref{eohiwohgieoh3bi0hr048ihri24tih08rtfhgurfpobu94g} and
\cite{bump1998automorphic}, Theorem 3.4.2 (and \cite{VigLivre}, II.2.8, in
the $p$-adic case).
\end{proof}
\begin{lem}\label{eiheg509hw8hfw2h4gf8h2498eghf84e29ghf8924htf}
In the notation of the preceding proposition, assume that both
$V$ and $V'$ are finitely generated and projective. Then
there is a ring-homomorphism
\begin{equation*}
\operatorname{End}_G(V)\otimes_R \operatorname{End}_{G'}(V') \cong
\operatorname{End}_{G\times G'}(V\boxtimes V').
\end{equation*}
This generalises to the case where we consider a finite collection
of groups $G, G', G'', \ldots$.
\end{lem}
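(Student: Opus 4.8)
The plan is to exhibit the isomorphism explicitly and then check it is a ring map, reducing everything to the already-established Hecke-algebra (or group-algebra) identification $R_{G\times G'}\cong R_G\otimes_R R_{G'}$ from Proposition~\ref{eohiwohgieoh3bi0hr048ihri24tih08rtfhgurfpobu94g}. First I would recall that, since $V$ and $V'$ are finitely generated projective, so is $V\boxtimes V'$ by Proposition~\ref{snvoiwboivgbeifbgeiwbgvbdwoigvboweobiiojsdbvajb}~ii); this is what makes the endomorphism rings well-behaved and, more importantly, lets me embed $V$ and $V'$ as direct summands of free modules $R_G^{m}$ and $R_{G'}^{m'}$. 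The natural candidate map sends an elementary tensor $\alpha\otimes\alpha'$ with $\alpha\in\operatorname{End}_G(V)$, $\alpha'\in\operatorname{End}_{G'}(V')$ to the $R$-linear endomorphism $\alpha\otimes\alpha'$ of $V\otimes_R V'$; one checks on elementary tensors $v\otimes v'$ that this is $G\times G'$-equivariant using the definition $(g,g')(v\otimes v')=gv\otimes g'v'$, so it does land in $\operatorname{End}_{G\times G'}(V\boxtimes V')$. That it respects composition, $R$-bilinearity, and units is immediate from the corresponding facts for tensor products of $R$-linear maps, so the only real content is injectivity and surjectivity.

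For well-definedness and injectivity I would argue as follows. Well-definedness of the map out of the tensor product over $R$ is the standard universal-property check: $(\alpha,\alpha')\mapsto\alpha\otimes\alpha'$ is $R$-bilinear into $\operatorname{End}_{G\times G'}(V\boxtimes V')$. For injectivity, note $\operatorname{End}_G(V)$ is a finitely generated (projective) $R$-module sitting inside $\operatorname{End}_R(V)$ for $V$ now playing the role of a fin.\ gen.\ projective $R_G$-module, and similarly for $V'$; the composite
\begin{equation*}
\operatorname{End}_G(V)\otimes_R\operatorname{End}_{G'}(V')\longrightarrow
\operatorname{End}_{G\times G'}(V\boxtimes V')\hookrightarrow
\operatorname{End}_R(V)\otimes_R\operatorname{End}_R(V')
\end{equation*}
is the obvious inclusion tensored with itself, hence injective, which forces the first arrow to be injective. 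For surjectivity I would use the splitting: pick $V\oplus Q=R_G^{m}$ and $V'\oplus Q'=R_{G'}^{m'}$, so that $V\boxtimes V'$ is a direct summand of $R_G^m\otimes_R R_{G'}^{m'}\cong R_{G\times G'}^{mm'}$ via the algebra isomorphism of Proposition~\ref{eohiwohgieoh3bi0hr048ihri24tih08rtfhgurfpobu94g}. Any $G\times G'$-endomorphism of $V\boxtimes V'$ extends (by $0$ on the complementary summand) to an $R_{G\times G'}$-endomorphism of $R_{G\times G'}^{mm'}$, i.e.\ to an $mm'\times mm'$ matrix over $R_{G\times G'}^{\mathrm{op}}\cong R_G^{\mathrm{op}}\otimes_R R_{G'}^{\mathrm{op}}$; such a matrix manifestly lies in the image of $M_m(\operatorname{End}_G R_G)\otimes_R M_{m'}(\operatorname{End}_{G'}R_{G'})$, and conjugating back by the idempotents cutting out $V$ and $V'$ shows the original endomorphism is a sum of pure tensors $\alpha\otimes\alpha'$.

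The main obstacle I expect is the surjectivity step in the $p$-adic case, where $R_G=\mathscr{H}(G)$ is non-unital: one cannot literally write $V$ as a summand of a \emph{finitely generated free} $\mathscr{H}(G)$-module in the naive sense, so "finitely generated projective" has to be interpreted as a summand of $\mathscr{H}(G)\otimes_{\mathscr{H}(K)}(\text{f.g. free }\mathscr{H}(K))$ for a suitable compact open $K$, or equivalently via the characterisation~(\ref{aosfoibwefgbghb3htf08iwh48itfh0328htgb02}) and the description in Proposition~\ref{aioshoihbfohgbweoifgboiubosdbfoasbf}~iii). In practice the cleanest route is to fix a compact open subgroup $K\subset G$ (resp.\ $K'\subset G'$) with $V$ generated by its $K$-fixed vectors and projective, realise $V$ as $\mathscr{H}(G)e$ for an idempotent $e$ in a matrix algebra over $\mathscr{H}(K)$, and then run the matrix-algebra argument above with $K\times K'\subset G\times G'$; the compatibility of outer tensor product with the algebra isomorphism $\mathscr{H}(G\times G')\cong\mathscr{H}(G)\otimes_R\mathscr{H}(G')$ (Proposition~\ref{eohiwohgieoh3bi0hr048ihri24tih08rtfhgurfpobu94g}~ii)) then makes the finite and $p$-adic cases formally identical. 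The generalisation to a finite collection $G,G',G'',\dots$ follows by iterating the two-factor statement, associativity of $\otimes_R$ and $\boxtimes$ being routine.
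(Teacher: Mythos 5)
Your argument is correct, but it takes a genuinely different route from the paper. The paper simply invokes the module-theoretic characterisation of representations and then cites Exercise 5 in Chapter 9.3 of Pierce for finitely presented modules, adding only the remark that finitely generated projective implies finitely presented; the iteration to finitely many factors is handled exactly as you do. You instead re-prove the needed isomorphism by hand: split $V\oplus Q\cong R_G^m$, $V'\oplus Q'\cong R_{G'}^{m'}$, identify $\operatorname{End}$ of the free module with a matrix algebra, and cut down by the product idempotent to get $\operatorname{End}_G(V)\otimes_R\operatorname{End}_{G'}(V')\cong\operatorname{End}_{G\times G'}(V\boxtimes V')$. What your approach buys is self-containedness and, importantly, an explicit treatment of the point the paper's citation glosses over: for the non-unital algebra $\mathscr{H}(G)$ one has $\operatorname{End}_{\mathscr{H}(G)}(\mathscr{H}(G))\neq\mathscr{H}(G)^{\mathrm{op}}$, so the naive matrix step fails and one must pass to summands of $(\mathscr{H}(G)e_K)^m$ with the unital corner $e_K\mathscr{H}(G)e_K=\mathscr{H}(G,K)$ (your phrase ``matrix algebra over $\mathscr{H}(K)$'' should be read this way); the paper's shorter proof, by contrast, covers finitely presented modules in one stroke but leaves this non-unital subtlety implicit. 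Two small repairs to your write-up: in the injectivity step the claimed factorisation through $\operatorname{End}_R(V)\otimes_R\operatorname{End}_R(V')$ does not exist in general (a $G\times G'$-equivariant endomorphism of $V\otimes_RV'$ need not lie in that subspace when the spaces are infinite-dimensional), but composing instead with the canonical injection $\operatorname{End}_R(V)\otimes_R\operatorname{End}_R(V')\hookrightarrow\operatorname{End}_R(V\otimes_RV')$ gives the same conclusion; and in any case injectivity and surjectivity both drop out of your corner computation, since $(e\otimes e')\bigl(M_m(R_G^{\mathrm{op}})\otimes_RM_{m'}(R_{G'}^{\mathrm{op}})\bigr)(e\otimes e')=eM_m(R_G^{\mathrm{op}})e\otimes_Re'M_{m'}(R_{G'}^{\mathrm{op}})e'$ over the field $R$, so the separate injectivity argument is not even needed.
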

\begin{proof}
Using the characterisation of representations as modules
over the appropriate group (or Hecke-) algebra, we can conclude the 
claim from Exercise 5 in Chapter 9.3 of \cite{Pierce}
when both $V$ and $V'$ are finitely presented. But this is the case,
as finitely generated projective implies
finitely presented (the proof of \cite{BourbakiCom}, 
Chapter 1, Paragraph 2, no. 8, Lemma 8.iii works for non-commutative
rings). Part ii of Proposition 
\ref{snvoiwboivgbeifbgeiwbgvbdwoigvboweobiiojsdbvajb} provides an 
iterative way of generalising the claim to arbitrary finite collections.
\end{proof}
From now on, we restrict ourselves to the $p$-adic setting:
\begin{prop}\label{wihfgoiwbgwighni4ehbio4hbgoihno2i4hng42}
Consider two open, compact subgroups 
$K\subset G$, $K'\subset G'$ and let $V$ be a $K$-representation and
$V'$ be a $K'$-representation. Then we have
\begin{equation}\label{iodshohg2380rf2zr93ztgufwe9g283g98r34g9g239g}
\operatorname{ind}_K^G(V)\boxtimes 
\operatorname{ind}_{K'}^{G'}(V') \cong
\operatorname{ind}_{K\times K'}^{G\times G'} (V\boxtimes V').
\end{equation}
\end{prop}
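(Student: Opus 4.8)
The plan is to reduce this identity to the Hecke-algebra picture, where the outer tensor product becomes an ordinary tensor product over $R$ and induction becomes extension of scalars, and then invoke associativity of the tensor product. Concretely, by Proposition~\ref{aioshoihbfohgbweoifgboiubosdbfoasbf} (via the characterisation~(\ref{aosfoibwefgbghb3htf08iwh48itfh0328htgb02})), the functor $\operatorname{ind}_K^G$ corresponds to $M\mapsto \mathscr{H}(G)\otimes_{\mathscr{H}(K)}M$, and likewise for $K'\subset G'$ and for $K\times K'\subset G\times G'$; here we use that $K, K'$ and hence $K\times K'$ are open, so that no smoothening subtlety arises. By Proposition~\ref{eohiwohgieoh3bi0hr048ihri24tih08rtfhgurfpobu94g}(ii), the outer tensor product on both sides of~(\ref{iodshohg2380rf2zr93ztgufwe9g283g98r34g9g239g}) is, under the algebra isomorphisms $\mathscr{H}(G)\otimes_R\mathscr{H}(G')\cong\mathscr{H}(G\times G')$ and $\mathscr{H}(K)\otimes_R\mathscr{H}(K')\cong\mathscr{H}(K\times K')$, nothing but the ordinary tensor product of modules over the relevant algebras.

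With this dictionary in place, the right-hand side of~(\ref{iodshohg2380rf2zr93ztgufwe9g283g98r34g9g239g}) becomes
\begin{equation*}
\mathscr{H}(G\times G')\otimes_{\mathscr{H}(K\times K')}(V\otimes_R V')
\cong
\bigl(\mathscr{H}(G)\otimes_R\mathscr{H}(G')\bigr)
\otimes_{\mathscr{H}(K)\otimes_R\mathscr{H}(K')}(V\otimes_R V'),
\end{equation*}
while the left-hand side becomes
$\bigl(\mathscr{H}(G)\otimes_{\mathscr{H}(K)}V\bigr)\otimes_R\bigl(\mathscr{H}(G')\otimes_{\mathscr{H}(K')}V'\bigr)$.
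So the content of the proposition is the purely algebraic identity
\begin{equation*}
(A\otimes_R A')\otimes_{B\otimes_R B'}(M\otimes_R M')
\cong
(A\otimes_B M)\otimes_R(A'\otimes_{B'}M'),
\end{equation*}
valid for $R$-algebras $B\to A$, $B'\to A'$ and modules ${}_BM$, ${}_{B'}M'$. I would prove this by writing down the natural map $(a\otimes a')\otimes(m\otimes m')\mapsto (a\otimes m)\otimes(a'\otimes m')$, checking it is well defined and $(A\otimes_R A')$-linear, and exhibiting the obvious inverse on simple tensors; the standard way to make this rigorous is to present both sides by the same universal property (they both represent $R$-bilinear-over-$(B,B')$ maps out of $M\times M'$ into $A\otimes_R A'$-modules), which forces the isomorphism.

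The only point requiring a little care — and the step I expect to be the main, if modest, obstacle — is bookkeeping the module structures so that the tensor-over-$(B\otimes_R B')$ on the right really does match the separate tensors over $B$ and over $B'$ on the left. One has to check that the $\mathscr{H}(K\times K')$-action on $V\boxtimes V'$ coming from restriction of the $\mathscr{H}(G\times G')$-side agrees, under $\mathscr{H}(K)\otimes_R\mathscr{H}(K')\cong\mathscr{H}(K\times K')$, with the tensor of the two restricted actions; but this is exactly the compatibility packaged in Proposition~\ref{eohiwohgieoh3bi0hr048ihri24tih08rtfhgurfpobu94g}(ii) (the rule $(\varphi\otimes\psi)\ast(v\otimes v')=(\varphi\ast v)\otimes(\psi\ast v')$), applied both to the pair $(K,K')$ and to the pair $(G,G')$ and combined with the commutativity of the evident square of algebra maps. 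Once that square is seen to commute, the associativity isomorphism above is canonical and $G\times G'$-equivariant, which is precisely~(\ref{iodshohg2380rf2zr93ztgufwe9g283g98r34g9g239g}). Finally, since $K$, $K'$ are compact, $\operatorname{ind}$ here coincides with unnormalised compact induction and all modules in sight are smooth by Proposition~\ref{eohiwohgieoh3bi0hr048ihri24tih08rtfhgurfpobu94g}(ii), so there is no issue passing between $\operatorname{ind}_{H}^G(\cdot)_{\ast}$ and its smooth part.
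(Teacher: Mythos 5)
Your proof is correct in substance, but it takes a genuinely different route from the paper. The paper's own argument is entirely function-theoretic: it writes down the explicit intertwiner $f\boxtimes f'\mapsto\bigl[(g,g')\mapsto f(g)\boxtimes f'(g')\bigr]$ and constructs an inverse by hand, using that the right-hand side of (\ref{iodshohg2380rf2zr93ztgufwe9g283g98r34g9g239g}) is spanned by functions supported on single cosets $(Kg,K'g')$. You instead transport everything through the dictionary (\ref{aosfoibwefgbghb3htf08iwh48itfh0328htgb02}) and Proposition \ref{eohiwohgieoh3bi0hr048ihri24tih08rtfhgurfpobu94g}.ii into the Hecke-module picture and reduce to the base-change identity $(A\otimes_R A')\otimes_{B\otimes_R B'}(M\otimes_R M')\cong(A\otimes_B M)\otimes_R(A'\otimes_{B'}M')$. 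That identity is true and your reduction is sound; the compatibility square you worry about does commute, since the algebra isomorphism of Proposition \ref{eohiwohgieoh3bi0hr048ihri24tih08rtfhgurfpobu94g}.ii is extension-by-zero-compatible in each factor. The one point you should make explicit rather than wave at is that $\mathscr{H}(G)$, $\mathscr{H}(K)$, etc.\ are idempotented but \emph{non-unital}: in checking that the ``obvious inverse'' $(a\otimes m)\otimes(a'\otimes m')\mapsto(a\otimes a')\otimes(m\otimes m')$ is balanced, you need to replace expressions like $b\otimes 1$ by $b\otimes e$ for an idempotent $e$ chosen (using smoothness of $V'$ and local constancy of elements of $\mathscr{H}(G')$) so that $a'e=a'$ and $em'=m'$. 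With that remark your argument is complete. The trade-off: your approach is functorial and generalises to any setting where induction is extension of scalars, while the paper's direct construction is shorter, self-contained, and sidesteps the categorical dictionary and the non-unitality bookkeeping altogether.
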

\begin{proof}
We define a map $\alpha$ from the 
left to the 
right side by linear continuation of
the following rule:
\begin{equation*}
f\boxtimes f' \longmapsto \Big[ (g,g')\mapsto f(g)\boxtimes f'(g')\Big]
\end{equation*}
It is obvious that this is a well-defined $G\times G'$-intertwiner.\\
The right side of (\ref{iodshohg2380rf2zr93ztgufwe9g283g98r34g9g239g})
is linearly spanned by functions supported on a single coset
$(Kg, K'g')$. Let $\varphi$ be such a function, then it is characterized
by its value at $(g,g')$, say, $\varphi(g,g')= \sum_i v_i\boxtimes v_i'$.
Now we can define a map from the right side to the left side
of (\ref{iodshohg2380rf2zr93ztgufwe9g283g98r34g9g239g}) by linear continuation
of the rule sending $\varphi$ to $\sum_i f_i\boxtimes f_i'$ with
all the $f_i$ supported on $Kg$ and the $f_i'$ supported on $K'g'$ and
$f_i(g)=v_i, f_i'(g')=v_i'$. It is easy to see that these maps are inverse
to each other.\end{proof}
\begin{rem}
If both $G$ and $G'$ are general linear groups over $F$, 
 the categories $\mathfrak{R}(G)$ and 
$\mathfrak{R}(G')$ are noetherian by
Th\'eor\`eme 5.4.1 in \cite{dat1999types}.
Thus, in this case, the claim 
of Lemma \ref{eiheg509hw8hfw2h4gf8h2498eghf84e29ghf8924htf} holds
if $V$ and $V'$ are just finitely generated. It might be possible to generalise
this to finitely many groups and representations.
\end{rem}

\subsection{Parabolic and parahoric functors}
Recall that, in general for the group $\operatorname{GL}_n(K)$ over some
field $K$, a parabolic subgroup is defined to be the stabiliser of a 
flag in the vector space $K^n$. If this flag is adapted to the standard
basis we call the resulting parabolic subgroup \textit{standard}. 
A partition $\lambda$ of $n$ gives rise to a standard parabolic subgroup,
for example associated to $\lambda= (1, 1, \ldots, 1)$ we have the
standard Borel subgroup of upper-triangular matrices. Any parabolic 
subgroup is $\operatorname{GL}_n(K)$-conjugate 
to a standard one. Recall moreover, that
each parabolic subgroup $P$ decomposes as $P= M \ltimes U$, where $U$
is the unipotent radical of $P$ and $M \cong \prod_I \operatorname{GL}_{n_i}(K)$
(for suitable numbers $n_i$ with $\sum_I n_i =n$) is the Levi-factor of $P$.
\begin{defn}[Parabolic induction and restriction]
Let $P=MU \subset G = \operatorname{GL}_n(K)$ be a parabolic subgroup, where
$K=F$ or $K=\mathbb{F}_q$. The parabolic induction functor then transforms
an $M$-representation into a $G$-representation by first inflating
trivially along $U$ (what yields a $P$-representation) and then inducing up to 
$\operatorname{GL}_n(K)$ 
(induction with compact support in the $p$-adic case).\\
The parabolic restriction associates to a 
$\operatorname{GL}_n(K)$-representation $V$ the space $V_U = V/V(U)$, where
\begin{equation*}
V(U) = \langle v-uv\,|\, v\in V, u\in U\rangle
\end{equation*}
is the space of $U$-coinvariants. $V_U$ is naturally a representation of $M$.\\
In the $p$-adic case, these functors are called Jacquet functors and we write
$\textit{i}_{M\subset P}^G, \textit{i}_{M\subset P}^G$.
In the finite case, these functors are called Harish-Chandra functors and
we write $\begin{bf}i\end{bf}_{{M}\subset P}^{{G}},
\begin{bf}r\end{bf}_{{M}\subset P}^{{G}}$ or simply $\begin{bf}i\end{bf}_{{M}}^{{G}},
\begin{bf}r\end{bf}_{{M}}^{{G}}$. This last notation
is justified by the Howlett-Lehrer theorem (cf. \cite{howlett1994harish}), which
asserts that the isomorphism class of the induced (or restricted) 
representation depends only on the Levi-factor and not on the particular
parabolic subgroup (which is not true in the $p$-adic setting).
\end{defn}
\begin{defn}[(Super-)cuspidal representation]
An irreducible representation $V$ of $G$ is called cuspidal if 
$\textit{r}_{M\subset P}^G(V)\neq 0$ (resp. 
$\begin{bf}r\end{bf}_{{M}}^{{G}}(V)\neq 0$) implies $M=G$.
$V$ is called supercuspidal if its occurrence as a subquotient
of some
$\textit{r}_{M\subset P}^G(W)$ (resp. $\begin{bf}i\end{bf}_{{M}}^{{G}}(W)$)
implies $M=G$.
\end{defn}

We collect the basic facts:
\begin{prop} \label{wnogivbweiogbiowebghtnwehgfoibweoigbvoibihnoi}
Let $G=\operatorname{GL}_n(K)$ be as above.
\renewcommand{\labelenumi}{\roman{enumi})}
\begin{enumerate}
\item Let $P =MU$ be a parabolic subgroup of $G$ and
$Q =NV$ be a parabolic subgroup of $M$. Then $NVU$ is a parabolic
subgroup of $G$ and we have
\begin{equation*}
\textit{i}_{N\subset NVU}^G \cong 
\textit{i}_{M\subset P}^G\circ
\textit{i}_{N\subset Q}^M\quad
(\text{ resp. }\, 
\begin{bf}i\end{bf}_{{N}}^{{G}} \cong
\begin{bf}i\end{bf}_{{M}}^{{G}}\circ \begin{bf}i\end{bf}_{{N}}^{{M}})
\end{equation*}
and 
\begin{equation*}
\textit{r}_{N\subset NVU}^G \cong 
\textit{r}_{N\subset Q}^M\circ
\textit{r}_{M\subset P}^G\quad
(\text{ resp. }\, 
\begin{bf}r\end{bf}_{{N}}^{{G}} \cong
\begin{bf}r\end{bf}_{{N}}^{{M}}\circ \begin{bf}r\end{bf}_{{M}}^{{G}});
\end{equation*}
\item The Harish-Chandra functors commute with taking the contragredient
representation;
\item Let $P=MU$ and $Q=NV$ be two standard parabolic subgroups of $G$.
Then $M\cap Q$ is a parabolic subgroup of $M$ with unipotent radical
$M\cap U$. We have the following Mackey-style decomposition:
\begin{equation*}
\begin{bf}r\end{bf}_{{N}}^{{G}}\circ \begin{bf}i\end{bf}_{{M}}^{{G}}
= \bigoplus_{g\in P\backslash G/Q} \begin{bf}i
\end{bf}_{{N\cap P^{g^{-1}}}}^{{N}}
\circ \operatorname{Int}(g)\circ \begin{bf}r\end{bf}^{{M}}_{{M\cap Q^g}}.
\end{equation*}
\item Let $V$ be a $G$-representation and $W$ be an $M$-representation,
then we have the following adjointness relations:
\begin{eqnarray*}
\operatorname{Hom}_M(\textit{r}_{M\subset P}^G(V), W) \cong
\operatorname{Hom}_G(V, \textit{i}_{M\subset P}^G(W)),\\ 
\operatorname{Hom}_M(\begin{bf}r\end{bf}_{M}^G(V), W) \cong
\operatorname{Hom}_G(V, \begin{bf}i\end{bf}_{M}^G(W));
\end{eqnarray*}
In the finite case, we have moreover
\begin{equation*}
\operatorname{Hom}_G(\begin{bf}i\end{bf}_{M}^G(W),V )\cong
\operatorname{Hom}_M(W, \begin{bf}r\end{bf}_{M}^G(V)).
\end{equation*}
\end{enumerate}
\end{prop}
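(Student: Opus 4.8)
The plan is to treat Proposition~\ref{wnogivbweiogbiowebghtnwehgfoibweoigbvoibihnoi} as a compendium of standard facts and to reduce each item either to a cited result or to an elementary manipulation built on Proposition~\ref{aioshoihbfohgbweoifgboiubosdbfoasbf}. The one structural remark I would isolate at the outset and reuse throughout is that every unipotent radical occurring below is a $p$-group, hence of order invertible in $R$ (since $\ell\neq p$); consequently taking its invariants or its coinvariants is an exact functor, and the averaging operator over the group identifies invariants with coinvariants canonically. This single point is what keeps the $\ell$-modular statements no harder than their complex counterparts exactly where it matters, namely in ii), iii) and the last displayed formula of iv).

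For i) I would first dispose of the purely group-theoretic claim that $NVU$ is a parabolic subgroup of $G$ with Levi factor $N$ and unipotent radical $VU$: this is flag geometry, since $Q=NV$ refines the flag cut out by $M=\prod_i\operatorname{GL}_{n_i}(K)$; one checks directly that $VU$ is a group ($V\subset M$ normalises $U\trianglelefteq P$) that is normalised by $N$ (which normalises both $V$ and $U$), and it is then precisely the unipotent radical attached to the refined flag. Granting this, the statement for induction is induction in stages, $\operatorname{ind}_{NVU}^G\cong\operatorname{ind}_P^G\circ\operatorname{ind}_{NVU}^P$, combined with the observation that, under the identifications $P/U\cong M$ and $NVU/U\cong NV$, inducing along $NVU\subset P$ while keeping $U$ trivial is the same as inflating along $U$ the functor $\operatorname{ind}_{NV}^M$. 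The statement for restriction is the dual computation with coinvariants, using that $VU$ is generated by $V$ and $U$, so that $(-)_{VU}\cong(-)_{V}\circ(-)_{U}$. In the $p$-adic case all of this is in \cite{VigLivre}, and in the finite case it is classical.

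For ii) and iv) the arguments are formal. For ii): ordinary induction and restriction for the finite groups $P\subset G$ commute with the contragredient (for finite groups induction is self-dual), inflation along $U$ commutes with it trivially, and $(V_U)^{\vee}\cong(V^{\vee})^{U}\cong(V^{\vee})_{U}$ — the first isomorphism being formal, the second the invariant half of the structural remark — so the composites $\mathbf{i}_M^G$ and $\mathbf{r}_M^G$ commute with the contragredient. For iv), each isomorphism is Frobenius reciprocity: writing $\textit{i}_{M\subset P}^G(W)=\operatorname{ind}_P^G(\operatorname{Infl}_U W)$ and noting that $P$ is co-compact in $G$ because the partial flag space $G/P$ is compact over $F$, one has $\operatorname{Hom}_M(\textit{r}_{M\subset P}^G V,W)=\operatorname{Hom}_M(V_U,W)\cong\operatorname{Hom}_P(\operatorname{res}_P^G V,\operatorname{Infl}_U W)\cong\operatorname{Hom}_G(V,\textit{i}_{M\subset P}^G W)$, the middle step being the universal property of coinvariants ($U$ acts trivially on $W$) and the last Proposition~\ref{aioshoihbfohgbweoifgboiubosdbfoasbf}~ii). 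The same computation over a finite group (where every subgroup is both open and co-compact) gives $\operatorname{Hom}_M(\mathbf{r}_M^G V,W)\cong\operatorname{Hom}_G(V,\mathbf{i}_M^G W)$, and dually $\operatorname{Hom}_G(\mathbf{i}_M^G W,V)\cong\operatorname{Hom}_P(\operatorname{Infl}_U W,\operatorname{res}_P^G V)\cong\operatorname{Hom}_M(W,(\operatorname{res}_P^G V)^U)\cong\operatorname{Hom}_M(W,\mathbf{r}_M^G V)$, the last step using $(\operatorname{res}_P^G V)^U\cong(\operatorname{res}_P^G V)_U$ — precisely the identification that is unavailable $p$-adically, which is why only one Jacquet adjunction is recorded there.

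The only item carrying genuine content is iii), and this is where I expect the real work to lie. The plan is to start from the ordinary Mackey decomposition (Proposition~\ref{aioshoihbfohgbweoifgboiubosdbfoasbf}~iv), applied to the finite groups $P,Q\subset G$) for $\operatorname{res}_Q^G\circ\operatorname{ind}_P^G$, and then to apply the $V$-coinvariants functor summand by summand, where $V$ denotes the unipotent radical of $Q$. To commute this functor past a term $\operatorname{ind}_{Q\cap P^{g^{-1}}}^Q\circ\operatorname{Int}(g)\circ\operatorname{res}_{P\cap Q^g}^P$ one needs the refined Bruhat decomposition: the $P$--$Q$ double cosets admit representatives $g$ for which $Q\cap P^{g^{-1}}$ factors as a product of the four subgroups $N\cap M^{g^{-1}}$, $N\cap U^{g^{-1}}$, $V\cap M^{g^{-1}}$ and $V\cap U^{g^{-1}}$; once this factorisation is in hand, exactness and compatibility of $V$-coinvariants (again because $|V|$ is invertible in $R$) collapse the summand to $\mathbf{i}_{N\cap P^{g^{-1}}}^N\circ\operatorname{Int}(g)\circ\mathbf{r}_{M\cap Q^g}^M$. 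This is the classical computation over $\mathbb{C}$ (Howlett--Lehrer, \cite{howlett1994harish}), and it goes through verbatim over $R$; an $\ell$-modular reference is \cite{VigLivre}. I expect the main obstacle to be purely bookkeeping — keeping the four-fold factorisation and the conjugation conventions consistent — rather than anything genuinely new.
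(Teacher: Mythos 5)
Your proposal is correct, but it is worth noting that the paper does not actually prove this proposition at all: its ``proof'' is the single sentence that everything can be extracted from \cite{VigLivre} or \cite{BDC}. What you have written is a self-contained reconstruction of exactly the standard arguments that those references contain, organised around the one point that matters in the $\ell$-modular setting, namely that every unipotent radical in sight is a $p$-group (resp.\ has pro-order prime to $\ell$), so that $U$-invariants and $U$-coinvariants are exact and canonically identified by averaging in the finite case. Your treatment of i) (induction in stages plus the compatibility of inflation with induction along the preimage $NVU=q^{-1}(Q)$), of ii), and of iv) (first adjunction via the universal property of coinvariants plus Frobenius reciprocity for the co-compact subgroup $P$, with the extra finite-group adjunction coming precisely from the averaging identification $X^U\cong X_U$ that is unavailable $p$-adically) is sound and matches what one finds in \cite{VigLivre}. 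The only substantive item is iii), and your plan — ordinary Mackey for $\operatorname{res}_Q^G\circ\operatorname{ind}_P^G$ followed by passing the coinvariants functor through each summand using the four-fold factorisation of $Q\cap P^{g^{-1}}$ for distinguished double-coset representatives — is the classical proof and goes through over $R$. Two small remarks: \cite{howlett1994harish} is the reference for independence of the Harish-Chandra functors from the choice of parabolic, not for the Mackey formula, so for iii) the appropriate citation is \cite{BDC} (or Digne--Michel/Dipper--Du); and the unipotent radical of $M\cap Q$ is $M\cap V$ rather than $M\cap U$ as printed in the statement, a typo your refined-Bruhat bookkeeping would surface in any case. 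Net effect: your route gives an actual proof where the paper gives a pointer, at the cost of length; the paper's approach buys brevity by delegating entirely to the literature.
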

\begin{proof}
Everything can be extracted from \cite{VigLivre} or \cite{BDC}.
\end{proof}

In the sequel, the letters $\mathcal{G}, \mathcal{P}, \mathcal{M},\ldots$
will always denote finite groups whereas the notation $G, P, M, \ldots$ 
will be reserved for the $p$-adic case. For example, we will use the
short-hand notation $G_n$ for $\operatorname{GL}_n(F)$ and 
$\mathcal{G}_n$ for $\operatorname{GL}_n(q)$.\vspace{0.1cm}\\
Consider once again the maximal compact subgroup 
\begin{equation*}
\mathscr{K} = \operatorname{GL}_n(\mathcal{O}) \subset G= G_n.
\end{equation*}
If we denote by $\mathscr{K}(1) = \begin{textbf}1\end{textbf} + \mathbb{M}_{n\times n}
(\mathfrak{P})$ the pro-$p$-radical, reduction modulo $\mathscr{K}(1)$ gives a
group-homomorphism
\begin{equation*}
f_{\mathscr{K}}: \mathscr{K} \twoheadrightarrow \mathcal{G} = \mathcal{G}_n.
\end{equation*}
\begin{defn}[Parahoric subgroups]\label{ahfdougbuewgbrfuog398rz327z4237grif}
Let $\mathcal{P} = \mathcal{M}\mathcal{U}$ be a standard parabolic subgroup of 
$\mathcal{G}$. Then the pre-image 
$\mathscr{P}= f_{\mathscr{K}}^{-1}(\mathcal{P})\subset G$ is called
a standard parahoric subgroup. $\mathscr{P}$ is open and compact
with pro-$p$-radical $\mathscr{P}(1) = f_{\mathscr{K}}^{-1}(\mathcal{U})$.
Taking the quotient modulo $\mathscr{P}(1)$ defines a group-homomorphism
\begin{equation*}
f_{\mathscr{P}}: \mathscr{P} \twoheadrightarrow \mathcal{M}.
\end{equation*}
A parahoric subgroup is a group $G$-conjugate to a standard parahoric
subgroup.
\end{defn}
\begin{defn}[Parahoric induction and restriction]
Retain the notation from the above definition. Then the parahoric
induction functor maps an $\mathcal{M}$-rep\-re\-sen\-ta\-tion to a $G$-representation
by firstly inflating trivially along $\mathscr{P}(1)$ (what yields
a $\mathscr{P}$-representation) and then inducing compactly to $G$. The
parahoric restriction functor sends a $G$-representation to 
its space of $\mathscr{P}(1)$-invariants, which is then understood as 
a representation of $\mathscr{P}/\mathscr{P}(1) \cong \mathcal{M}$.
We denote these functors by $\textit{i}_{\mathscr{P}}^G, 
\textit{r}_{\mathscr{P}}^G$.
If we are only interested in standard parahoric subgroups, we will
use the notation $\textit{i}_{\mathcal{M}}^G, 
\textit{r}_{\mathcal{M}}^G$, where $\mathcal{M}$ is a standard 
Levi-subgroup of $\mathcal{G}$
(the dropping of $\mathscr{P}$ in the
notation will be justified by Proposition
\ref{sofhiowbeviobweoifbwoeihfewhfowehoigf}.ii).
\end{defn}
The standard facts are as follows:
\begin{prop}\label{sofhiowbeviobweoifbwoeihfewhfowehoigf}
Retain the notation from the above definitions, then
\renewcommand{\labelenumi}{\roman{enumi})}
\renewcommand{\labelenumii}{\arabic{enumii}.}
\begin{enumerate}
\item Both functors are exact and  
parahoric induction commutes with finite direct sums and
respects the properties `{cyclic}', `{finitely generated}' and
`{projective}';
\item Let $\mathcal{N}\subset \mathcal{M}\subset \mathcal{G}$, where each
inclusion is an inclusion of Levi-subgroups. Then 
\begin{equation*}
\textit{i}_{\mathcal{N}}^G \cong \textit{i}_{\mathcal{M}}^G\circ
\begin{bf}i\end{bf}_{\mathcal{N}}^{\mathcal{M}} \text{ and }
\textit{r}_{\mathcal{N}}^G \cong 
\begin{bf}r\end{bf}_{\mathcal{N}}^{\mathcal{M}}
\circ\textit{r}_{\mathcal{M}}^G;
\end{equation*}
\item Let $\mathcal{M}, \mathcal{N}$ be two Levi-subgroups of
$\mathcal{G}$, associated to parahoric subgroups $\mathscr{P}, \mathscr{Q}$ 
of $G$. Then there is a Mackey-decomposition
\begin{equation*}
\textit{r}_{\mathcal{N}}^G\circ
\textit{i}_{\mathcal{M}}^G  \cong 
\bigoplus_{g\in \mathscr{Q}\backslash G/\mathscr{P}}
F^{G}_{\mathscr{Q}(q)g\mathscr{P}(q)},
\end{equation*}
where $F^{G}_{\mathscr{Q}(q)g\mathscr{P}(q)}$ is the functor given by
concatenation of
\begin{enumerate}
\item Parabolic restriction along $f_{\mathscr{P}}
(\mathscr{P}\cap g^{-1}\mathscr{Q}g)\subset \mathcal{M}$, i. e. taking the 
$f_{\mathscr{P}}
(\mathscr{P}\cap g^{-1}\mathscr{Q}(1)g)$-(co)invariants;
\item The functor between representations of the reductive quotients
induced by conjugation
\begin{equation*}
\operatorname{Int}(g): \mathscr{P}\cap g^{-1}\mathscr{Q}g\rightarrow
g\mathscr{P}g^{-1} \cap
\mathscr{Q}
\end{equation*} (cf. \cite{vigneras2003schur}, 4.1.1.(e)));
\item Parabolic induction along $f_{\mathscr{Q}}(g\mathscr{P}g^{-1} \cap
\mathscr{Q})$;
\end{enumerate}
\item The set $D = S_n\cdot \Lambda \subset G$ with 
\begin{equation*}
\Lambda = \bigl\{\operatorname{diag}(\varpi^{a_1}, \ldots, \varpi^{a_n})
\;\bigl|\; a_1, \ldots, a_n\in \mathbb{Z}\bigr\} \cong \mathbb{Z}^n
\end{equation*}
is a set of representatives for $\mathscr{I}\backslash G/\mathscr{I}$
(with $\mathscr{I}= f_{\mathscr{K}}^{-1}(\text{Borel})$ the Iwahori-subgroup).
As $\mathscr{I}$ is contained in any standard parahoric subgroup, we can
(e. g. for the purpose of the Mackey-decomposition) 
choose a set of representatives for $\mathscr{Q}\backslash G/\mathscr{P}$
inside $D$;
\item Let $V$ be a $G$-representation and $W$ be an 
$\mathcal{M}$-representation,
then we have an adjointness relation
\begin{equation*}
\operatorname{Hom}_G(\textit{i}_{\mathcal{M}}^G(W), V) \cong
\operatorname{Hom}_{\mathcal{M}}(W, \textit{r}_{\mathcal{M}}^G(V)).
\end{equation*}
\end{enumerate}
\end{prop}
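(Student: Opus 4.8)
The plan is to deduce each part from the corresponding statement about ordinary induction/restriction (Proposition \ref{aioshoihbfohgbweoifgboiubosdbfoasbf}) combined with the basic properties of the Harish-Chandra functors (Propositions \ref{wnogivbweiogbiowebghtnwehgfoibweoigbvoibihnoi} and, for the outer-tensor bookkeeping, \ref{snvoiwboivgbeifbgeiwbgvbdwoigvboweobiiojsdbvajb}). The common mechanism is that parahoric restriction $\textit{r}_{\mathscr{P}}^G$ factors as ``take $\mathscr{P}(1)$-invariants'', and since $\mathscr{P}(1)$ is a pro-$p$ group and $\ell\neq p$, the pro-order of $\mathscr{P}(1)$ is invertible in $R$; hence the invariants functor is exact and is canonically complemented (averaging over $\mathscr{P}(1)$ gives a projector $V\twoheadrightarrow V^{\mathscr{P}(1)}$), so it commutes with direct sums and is both left and right adjoint in the appropriate sense. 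Parahoric induction is ordinary compact induction precomposed with inflation along $\mathscr{P}(1)$, and inflation is clearly exact and sum-preserving and sends cyclic/f.g./projective objects to the same (inflation along a quotient map with invertible-pro-order kernel is split on both sides).

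For (i): exactness of $\textit{i}_{\mathscr{P}}^G$ follows from Proposition \ref{aioshoihbfohgbweoifgboiubosdbfoasbf}.v) (the hypothesis there holds, e.g.\ taking $K^\ast=\mathscr{K}(1)$, whose pro-order is a power of $p$, hence nonzero in $R$) together with exactness of inflation; exactness of $\textit{r}_{\mathscr{P}}^G$ is the exactness of $\mathscr{P}(1)$-invariants just discussed. Commuting with finite sums and preservation of cyclic/f.g./projective come from Proposition \ref{aioshoihbfohgbweoifgboiubosdbfoasbf}.i),iii) together with the analogous (trivial) facts for inflation. For (ii): both identities are transitivity statements. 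By Proposition \ref{aioshoihbfohgbweoifgboiubosdbfoasbf}.iv) one can realise $\textit{i}_{\mathcal{M}}^G$ on a standard parahoric, and one checks that inducing from the parahoric attached to $\mathcal{N}$ factors through the parahoric attached to $\mathcal{M}$ followed by Harish-Chandra induction $\begin{bf}i\end{bf}_{\mathcal{N}}^{\mathcal{M}}$ inside $\mathcal{M}$; concretely, if $\mathscr{P}_{\mathcal{M}}, \mathscr{P}_{\mathcal{N}}$ are the standard parahorics, then $\mathscr{P}_{\mathcal{N}}\subset\mathscr{P}_{\mathcal{M}}$, $\mathscr{P}_{\mathcal{N}}(1)\supset\mathscr{P}_{\mathcal{M}}(1)$, and $\mathscr{P}_{\mathcal{N}}/\mathscr{P}_{\mathcal{M}}(1)$ is the parabolic of $\mathcal{M}$ defining $\begin{bf}i\end{bf}_{\mathcal{N}}^{\mathcal{M}}$, so the claim is transitivity of compact induction (stages through $\mathscr{P}_{\mathcal{N}}\subset\mathscr{P}_{\mathcal{M}}\subset G$) combined with the matching of inflations; the restriction identity is dual, using transitivity of invariants $V^{\mathscr{P}_{\mathcal{N}}(1)} = \bigl(V^{\mathscr{P}_{\mathcal{M}}(1)}\bigr)^{\mathscr{P}_{\mathcal{N}}(1)/\mathscr{P}_{\mathcal{M}}(1)}$. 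Part (v) is then just the composite of the two adjunctions from Proposition \ref{aioshoihbfohgbweoifgboiubosdbfoasbf}.ii) (for the open subgroup $\mathscr{P}$) and the Frobenius adjunction between inflation and invariants: $\operatorname{Hom}_G(\operatorname{ind}_{\mathscr{P}}^G(\operatorname{infl}W),V)\cong\operatorname{Hom}_{\mathscr{P}}(\operatorname{infl}W,\operatorname{res}_{\mathscr{P}}^G V)\cong\operatorname{Hom}_{\mathcal{M}}(W,(\operatorname{res}_{\mathscr{P}}^G V)^{\mathscr{P}(1)})$. Part (iv) is the standard Iwahori–Bruhat-type description of $\mathscr{I}\backslash G/\mathscr{I}$ and the observation $\mathscr{I}\subset\mathscr{P}$ for every standard parahoric, so coset representatives can be chosen in $D$; this is classical and can be cited from \cite{VigLivre}.

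The main work is (iii), the Mackey decomposition. Here I would start from the $p$-adic Mackey formula of Proposition \ref{aioshoihbfohgbweoifgboiubosdbfoasbf}.iv) applied to the open subgroups $\mathscr{P},\mathscr{Q}$, obtaining $\operatorname{res}_{\mathscr{Q}}^G\operatorname{ind}_{\mathscr{P}}^G\cong\bigoplus_{g\in\mathscr{Q}\backslash G/\mathscr{P}}\operatorname{ind}_{\mathscr{Q}\cap{}^{g}\mathscr{P}}^{\mathscr{Q}}\operatorname{Int}(g)\operatorname{res}_{\mathscr{P}\cap{}^{g^{-1}}\mathscr{Q}}^{\mathscr{P}}$, then push this through the inflation on the source (pre-compose with $\operatorname{infl}$ from $\mathcal{M}$) and the invariants on the target (post-compose with $(-)^{\mathscr{Q}(1)}$). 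The identification of each summand with the claimed composite $F^G_{\mathscr{Q}(q)g\mathscr{P}(q)}$ then rests on two group-theoretic facts: first, that $f_{\mathscr{P}}(\mathscr{P}\cap g^{-1}\mathscr{Q}g)$ is a parabolic subgroup of $\mathcal{M}$ with unipotent radical $f_{\mathscr{P}}(\mathscr{P}\cap g^{-1}\mathscr{Q}(1)g)$ (and symmetrically on the $\mathscr{Q}$ side) — this is exactly the computation cited from \cite{vigneras2003schur}, 4.1.1.(e), and is most cleanly verified by choosing $g\in D$ as in (iv), so that $g$ normalises the diagonal torus and one can read off the intersection with the root-subgroup decomposition of $\mathscr{P}$; second, that the pro-$p$ parts match up so that ``$\operatorname{ind}$ from an open subgroup followed by $\mathscr{Q}(1)$-invariants'' becomes ``parabolic induction in $\mathcal{M}$'', and ``$\mathscr{P}(1)$-inflation followed by $\operatorname{res}$ to the open intersection'' becomes ``parabolic restriction in $\mathcal{M}$''. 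The conjugation functor $\operatorname{Int}(g)$ descends to reductive quotients because $g$ conjugates $\mathscr{P}(1)\cap{}^{g^{-1}}\mathscr{Q}(1)$-type subgroups appropriately. I expect the bookkeeping of which pro-$p$ radical one is taking invariants/coinvariants against at each stage — and checking it is consistent on both the $\mathscr{P}$ and $\mathscr{Q}$ sides simultaneously — to be the only genuinely delicate point; once $g$ is taken in $D$ everything is a finite root-datum computation, and the exactness granted by $\ell\neq p$ ensures invariants and coinvariants agree so there is no ambiguity between $\begin{bf}r\end{bf}$ defined via invariants versus coinvariants.
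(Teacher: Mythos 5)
Your proposal is correct, and its skeleton coincides with the paper's: part (i) is deduced from the corresponding facts for compact induction (Proposition~\ref{aioshoihbfohgbweoifgboiubosdbfoasbf}) together with the observation that inflation along, and invariants under, the pro-$p$ radical are exact because $\ell\neq p$. The difference is one of granularity: for parts (ii)--(v) the paper simply cites the literature (Vign\'eras' \emph{Schur algebras} 4.1.2--4.1.3 for (ii) and (v), her Proposition 6.4 for the parahoric Mackey formula (iii), and Ollivier for the Iwahori double cosets (iv)), whereas you sketch direct derivations: induction in stages through $\mathscr{P}_{\mathcal{N}}\subset\mathscr{P}_{\mathcal{M}}\subset G$ with matching of inflations for (ii), composition of the two Frobenius adjunctions (compact induction/restriction and inflation/invariants) for (v), and pushing the open-subgroup Mackey formula of Proposition~\ref{aioshoihbfohgbweoifgboiubosdbfoasbf}.iv through inflation and invariants for (iii). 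These are exactly the arguments behind the cited results, so your version is self-contained where the paper is not; the one place you stop short is the root-datum computation in (iii) (that $f_{\mathscr{P}}(\mathscr{P}\cap g^{-1}\mathscr{Q}g)$ is a parabolic subgroup of $\mathcal{M}$ with unipotent radical $f_{\mathscr{P}}(\mathscr{P}\cap g^{-1}\mathscr{Q}(1)g)$ for $g\in D$), which you flag but do not carry out --- and which is precisely where the cited reference does the real work. A minor divergence worth noting: for preservation of projectives the paper argues that parahoric induction is left adjoint to the exact parahoric restriction (via part (v) and Vign\'eras I.A.1), while you argue that inflation preserves projectives because its right adjoint, the $\mathscr{P}(1)$-invariants functor, is exact, and then invoke Proposition~\ref{aioshoihbfohgbweoifgboiubosdbfoasbf}.iii for the compact-induction step; both routes are valid.
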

\begin{proof}
The claim about exactness of induction follows from
Proposition \ref{aioshoihbfohgbweoifgboiubosdbfoasbf}.v
because inflation along 
$\mathscr{P}(1)$ evidently preserves exact sequences. As $\ell$
does not divide the pro-order of $\mathscr{P}(1)$, the same reasoning
can be complemented with Section I.4.6 
in \cite{VigLivre} to prove exactness of the
restriction.
The next three parts of i) immediately follow from the corresponding
facts for compact induction. 
The last claim about projectives follows from exactness of parahoric
restriction together with part v) of this Proposition and \cite{VigLivre}, 
I.A.1.\\
Part ii) is Claim 4.1.3 of \cite{vigneras2003schur}.\\
Part iii) is Proposition 6.4 in \cite{vigirred}.\\
Part iv) can be checked back e. g. with Section 2.3.1 of
\cite{ollivier2009parabolic}.\\
Part v) is Claim 4.1.2.e in \cite{vigneras2003schur}.
\end{proof}
As a general rule, we will denote representations of a reductive quotient
by $\overline{\rho}, \overline{\sigma}, \ldots$ and the inflations
to a parahoric subgroup by $\rho, \sigma, \ldots$.
\subsection{An application of the Krull-Remak-Schmidt theorem}
\begin{lem}\label{svbjuwebewoigbofgoewibfgoiwehbfg}
Consider two different ways of writing a finite group as
a product of general linear groups over the same field $\mathbb{F}_q$:
\begin{equation*}
\mathfrak{G} = \prod_I \operatorname{GL}_{n_i}(q) =
\prod_J \operatorname{GL}_{m_j}(q)
\end{equation*}
with finite index sets $I$ and $J$. Then there is a bijection
$t: I\rightarrow J$ such that $n_i = m_{t(i)}$.
\end{lem}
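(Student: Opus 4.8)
The plan is to reduce the statement to the Krull--Remak--Schmidt theorem for the group, applied either at the level of the group itself or, more cleanly, at the level of its group algebra $R[\mathfrak{G}]$ (or indeed $\mathbb{Z}[\mathfrak{G}]$, since the statement is about $\mathfrak{G}$ as an abstract group). First I would recall that each $\operatorname{GL}_{n_i}(q)$ with $n_i\geq 1$ is \emph{indecomposable} as a direct factor of a group: this is clear for $n_i=1$ (a cyclic group, though it may decompose further — so one must be a little careful) and, more robustly, one should not argue with indecomposability of the abstract group but rather track a numerical invariant. The cleanest route is to observe that the two displayed product decompositions give two expressions of $\mathfrak{G}$ as an \emph{internal} direct product of normal subgroups, and then apply the Krull--Remak--Schmidt theorem (valid since $\mathfrak{G}$ is finite, hence satisfies both chain conditions on normal subgroups) to conclude that the multisets of isomorphism types of the factors agree — \emph{provided} each $\operatorname{GL}_{n_i}(q)$ is directly indecomposable.

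The subtlety is precisely that $\operatorname{GL}_1(q)\cong \mathbb{F}_q^\times$ is cyclic of order $q-1$ and is in general \emph{not} indecomposable, so a naive appeal to Krull--Remak--Schmidt on the group does not immediately distinguish, say, $\operatorname{GL}_1(q)\times\operatorname{GL}_1(q)$ from a single abstract group of the same order if that order happened to factor conveniently — but of course here both sides are built only from general linear groups over the \emph{fixed} field $\mathbb{F}_q$, and that is what must be exploited. So the key step is: refine both sides to a decomposition into directly indecomposable factors, invoke Krull--Remak--Schmidt to get a bijection between these refined factor lists, and then show that from the multiset of indecomposable factors one can \emph{recover} the original list $(n_i)_{i\in I}$. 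For $n_i\geq 2$, the group $\operatorname{GL}_{n_i}(q)$ is already directly indecomposable (its center is cyclic and it is perfect modulo center for $n_i\geq 2$ except small cases, but more simply one checks it has no nontrivial direct factor decomposition — e.g. because $\operatorname{SL}_{n_i}(q)$ is quasisimple for the relevant range and any direct factor would have to contain or miss it), and moreover $\operatorname{GL}_{n_i}(q)\not\cong\operatorname{GL}_{m}(q)$ for $n_i\neq m$ (compare orders: $|\operatorname{GL}_n(q)|=q^{n(n-1)/2}\prod_{j=1}^n(q^j-1)$ is strictly increasing in $n$). The $n_i=1$ factors all contribute copies of the \emph{same} abelian group $\mathbb{F}_q^\times$, so they can be counted by a congruence/order argument once the non-abelian part is stripped off.

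Concretely, I would proceed as follows. Step one: separate each side into its "abelian part" and "non-abelian part". The non-abelian part is the product of those factors with $n_i\geq 2$; by Krull--Remak--Schmidt (applied to the commutator-type decomposition, or directly to the group since each non-abelian $\operatorname{GL}_{n_i}(q)$ is directly indecomposable), the multiset $\{n_i : n_i\geq 2\}$ is an invariant of $\mathfrak{G}$, using that $|\operatorname{GL}_n(q)|$ determines $n$ for $n\geq 2$. Step two: the abelian part is $(\mathbb{F}_q^\times)^{a}$ on the $I$-side and $(\mathbb{F}_q^\times)^{b}$ on the $J$-side, where $a=\#\{i:n_i=1\}$, $b=\#\{j:m_j=1\}$; comparing orders of the whole group forces $(q-1)^a=(q-1)^b$, hence $a=b$ (as $q\geq 2$). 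Step three: since the multiset of $n_i\geq 2$ agrees and the count of $n_i=1$ agrees, the full multisets $\{n_i\}_{i\in I}$ and $\{m_j\}_{j\in J}$ coincide, which yields the desired bijection $t$. The main obstacle is making Step one rigorous — i.e. verifying that each $\operatorname{GL}_{n}(q)$ with $n\geq 2$ is directly indecomposable as a group and that the abelian direct factor of $\mathfrak{G}$ is genuinely $(\mathbb{F}_q^\times)^a$ and not something that could absorb extra cyclic pieces from the non-abelian factors; this is handled by noting $Z(\operatorname{GL}_n(q))\cong\mathbb{F}_q^\times$ and $\operatorname{GL}_n(q)/Z\operatorname{GL}_n(q)$ has trivial abelianization for $n\geq 2$ away from the tiny exceptions (and those exceptions, $\operatorname{GL}_2(2)\cong S_3$ and $\operatorname{GL}_2(3)$, can be dispatched by hand), so the maximal abelian direct factor of the whole product is exactly the product of the centers of \emph{all} factors intersected appropriately — cleanest is to pass to $\mathfrak{G}/[\mathfrak{G},\mathfrak{G}]$ and to $[\mathfrak{G},\mathfrak{G}]$ separately. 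Alternatively, and perhaps most economically for a paper, one simply cites Krull--Remak--Schmidt for the refinement into indecomposables and records the order computation $|\operatorname{GL}_n(q)|$ together with the elementary fact that this is strictly monotincreasing in $n$ for fixed $q$, which already pins down everything.
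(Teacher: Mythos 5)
Your overall Krull--Remak--Schmidt strategy is the right idea, but the claim carrying your Step one is false: for $n\geq 2$ the group $\operatorname{GL}_n(q)$ need \emph{not} be directly indecomposable. Whenever $\gcd(n,q-1)=1$ and $q>2$, the map $(z,A)\mapsto zA$ gives
\begin{equation*}
C_{q-1}\times \operatorname{SL}_n(q)\;\cong\;\operatorname{GL}_n(q),
\end{equation*}
for instance $\operatorname{GL}_2(4)\cong \mathbb{F}_4^{\times}\times \operatorname{SL}_2(4)\cong C_3\times A_5$. The facts you offer in support are also incorrect: $\operatorname{GL}_n(q)^{\mathrm{ab}}\cong\mathbb{F}_q^{\times}$ via the determinant, so $\operatorname{GL}_n(q)/Z$ has abelianisation $\mathbb{F}_q^{\times}/(\mathbb{F}_q^{\times})^{n}$, which is nontrivial exactly when $\gcd(n,q-1)>1$ (e.g.\ $\operatorname{PGL}_2(5)\cong S_5$), and in the complementary case the group visibly splits off an abelian direct factor, so ``perfect modulo centre'' and ``any direct factor must contain or miss $\operatorname{SL}_n$'' both fail. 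Consequently your dichotomy collapses at both ends: in a refinement of $\mathfrak{G}$ into indecomposables the non-abelian pieces need not be the $\operatorname{GL}_{n_i}(q)$ themselves (they may be $\operatorname{SL}_{n_i}(q)$'s), and the abelian part can absorb cyclic pieces split off from the $n_i\geq 2$ factors --- precisely the contamination you flagged and then dismissed with wrong facts. So Step one does not establish that $\{n_i:n_i\geq 2\}$ is an invariant, and the ``most economical'' variant at the end (Krull--Remak--Schmidt plus an order count) is not a proof either, since recovering the list $(n_i)$ from the mixed multiset of indecomposables is exactly what is at stake.

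The repair is what the paper does: do not feed the factors themselves into Krull--Remak--Schmidt, but first pass to the derived subgroup. One has $D(\operatorname{GL}_n(q))=\operatorname{SL}_n(q)$ for $n\geq 2$ (with the exception $\operatorname{GL}_2(2)\cong S_3$, handled separately), the $n_i=1$ factors die, and $\operatorname{SL}_n(q)$ \emph{is} directly indecomposable because it is quasi-simple apart from the small cases, which are checked by hand; Krull--Remak--Schmidt applied to $D(\mathfrak{G})$, together with the fact that $\operatorname{SL}_n(q)\not\cong\operatorname{SL}_m(q)$ for $n\neq m$, recovers the multiset $\{n_i:n_i\geq 2\}$. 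The number of $n_i=1$ factors is then counted via the centre of $\mathfrak{G}$ (the paper) or via $|\mathfrak{G}|$ (your Step two); note that both counts need $q-1>1$, so your parenthetical ``as $q\geq 2$'' is off --- a $q=2$ caveat the paper's centre argument shares, since $\operatorname{GL}_1(2)$ is trivial.
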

\begin{proof}
We start by taking the derived subgroup:
\begin{equation*}
D(\mathfrak{G}) = \prod_I D_{n_i}(q) =
\prod_J D_{m_j}(q),
\end{equation*}
where $D_k(q)$ is short for $D\left(\operatorname{GL}_{k}(q)\right)$.
\begin{prop}
$D_k(q)$ is directly 
indecomposable (i. e. cannot be written in a non-trivial way as a
direct product) for any choice of $k\in \mathbb{N}^{\geq 2}$ and $q$ any
prime power.
\end{prop}
\begin{proof}[Proof of the proposition.]\renewcommand{\qedsymbol}{$\Diamond$}
For $k=2, q=2$, $\operatorname{GL}_{2}(2)$ is
isomorphic to $S_3$, hence $D_2(2)$ is isomorphic to the cyclic group 
with three elements which is indecomposable. For all other values for $k$
and $q$, $D_k(q)$ is isomorphic to $\operatorname{SL}_k(q)$ (\cite{JS}, 
Satz B.3).\\
It is known that $\operatorname{SL}_k(q)$ is quasi-simple 
(see \cite{aschbacher2000finite}, Section
31 and \cite{JS}, Satz B.3, Kor. B.7), hence 
indecomposable (\cite{aschbacher2000finite}, (31.2))
except for $k=2, q=2$ (which was excluded) and
$k=3, q=2$. In the last case, there are (up to isomorphism) four
normal subgroups
and the occurring 
orders are $1, 2, 8$ and $24 = \#(\operatorname{SL}_2(3))$. This is clearly not
compatible with a decomposition of $\operatorname{SL}_2(3)$ as a direct
product.
\end{proof}
For $k=1$, $D_k(q)$ is the trivial group and hence, by definition, not 
directly indecomposable.\vspace{0.3cm}\\
As the next step, we take the centre of $\mathfrak{G}$ and immediately
conclude that $\#(I) = \#(J)$. Define $I'$ as the set of 
all $i\in I$ such that $n_i >1$, analogously for $J'$. Then the proposition 
allows us to apply the Krull-Remak-Schmidt theorem 
(\cite{hungerford1974algebra}, Thm. 3.8) to
\begin{equation*}
D(\mathfrak{G}) = \prod_{I} D_{n_i}(q) =\prod_{I'} D_{n_i}(q) =
\prod_{J'} D_{m_j}(q).
\end{equation*}
Because $D_k(q)$ is not isomorphic to $D_l(q)$ for $k\neq l$,
this tells us that there is a bijection $t: I'\rightarrow J'$ such that
$n_i = m_{t(i)}$ for all $i\in I'$. As $n_i = m_j = 1$ for $i\in I-I', 
j\in J-J'$, $t$ can be continued to be a bijection $I\rightarrow J$
such that $n_i = m_{t(i)}$ for all $i\in I$. As, again, 
$D_k(q)$ is not isomorphic to $D_l(q)$ for $k\neq l$, this implies the claim.
\end{proof}

\subsection{Special instance of the
parahoric Mackey-decomposition}\label{siodhfioebhgoiweigbewboibrfowe}

Notation as follows:
\begin{itemize}
\item $G_m = \operatorname{GL}_m(F)$ and $G = G_n$;
\item $\mathcal{G}_m = \operatorname{GL}_m(q)$ and
$\mathcal{G} = \mathcal{G}_n$;
\item $\mathscr{P}$ is a standard parahoric subgroup of $G$ with the 
property that 
\begin{equation*}
\mathcal{M}=\mathscr{P}/\mathscr{P}(1) = 
\prod_{i=1, \ldots, k} (\mathcal{G}_{n_i})^{m_i}
\end{equation*}
for numbers $n_i, m_i\in \mathbb{N}$ such that $\sum_{i=1}^k n_im_i = n$;
\item $\overline{\rho}$ denotes an $\mathcal{M}$-representation 
of the form $\boxtimes_{i=1}^k (\overline{\rho}_i)^{
m_i}$, where each $\overline{\rho}_i$ is a supercuspidal 
$\mathcal{G}_{n_i}$-representation and $i\neq j$ implies 
$\overline{\rho}_i \not\cong \overline{\rho}_j$ (this is trivially
true if $n_i \neq n_j$);
\item $\mathscr{K} = \operatorname{GL}_n(\mathcal{O})\subset G$.
\end{itemize}
Our aim in this section is to compute $\textit{r}^{G}_{\mathscr{K}}\circ
\textit{i}^{G}_{\mathscr{P}}(\overline{\rho})$, using 
Proposition \ref{sofhiowbeviobweoifbwoeihfewhfowehoigf}.iii, iv.
We say that $d\in D$ survives if $F^{G}_{\mathscr{K}(q)d\mathscr{P}(q)}$
does not vanish on $\overline{\rho}$. As $\overline{\rho}$ 
is cuspidal, $d$ survives precisely
if the the parabolic-restriction-step in the definition of
$F^{G}_{\mathscr{K}(q)d\mathscr{P}(q)}$ is trivial, i. e. if
$\mathscr{P}\cap d^{-1}\mathscr{K}(1)d \subset \mathscr{P}(1)$.
As $\mathscr{K}(1)$ is unaffected by $S_n$-conjugation, this is a condition
on $\lambda$ alone. 
A straight-forward matrix-computation then shows:
\begin{prop}
$d=s\lambda$ survives if and only if $\lambda$ is of the form
\begin{eqnarray*}
\lambda(a_1, \ldots, a_{m}) =\qquad\qquad\qquad\qquad
\qquad\qquad\qquad\qquad\qquad\qquad\qquad\;\;\;\\ 
\operatorname{diag}
(\; \underbrace{\varpi^{a_1}, \ldots, \varpi^{a_1}}_{n_1\text{-times}}, \ldots,
\underbrace{\varpi^{a_{m_1}}, \ldots, \varpi^{a_{m_1}}}_{n_1\text{-times}},
\underbrace{\varpi^{a_{m_1+1}}, \ldots, \varpi^{a_{m_1+1}}}_{n_2\text{-times}},
\ldots \;)
\end{eqnarray*}
with $m =  \sum_{i=1}^k m_i$ and $a_j\in\mathbb{Z}$.
\end{prop}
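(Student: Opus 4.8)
The plan is to make the condition $\mathscr{P}\cap d^{-1}\mathscr{K}(1)d\subset\mathscr{P}(1)$ completely explicit for $d=s\lambda$ with $\lambda=\operatorname{diag}(\varpi^{a_1},\ldots,\varpi^{a_n})$, and then translate it into the stated shape of $\lambda$. As already noted in the excerpt, $\mathscr{K}(1)=\mathbf{1}+\mathbb{M}_{n\times n}(\mathfrak{P})$ is normalised by the permutation matrices from $S_n$, so $d^{-1}\mathscr{K}(1)d=\lambda^{-1}\mathscr{K}(1)\lambda$ and the surviving condition depends only on $\lambda$. Writing an element of $\mathscr{K}(1)$ as $\mathbf{1}+(x_{ij})$ with $x_{ij}\in\mathfrak{P}$, conjugation by $\lambda$ replaces the $(i,j)$-entry by $\varpi^{a_j-a_i}x_{ij}$. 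Hence the $(i,j)$-entry of a typical element of $\lambda^{-1}\mathscr{K}(1)\lambda$ lies in $\varpi^{a_j-a_i}\mathfrak{P}=\mathfrak{P}^{1+a_j-a_i}$, and intersecting with $\mathscr{K}$ (i.e. imposing integrality) forces each off-diagonal entry into $\mathfrak{P}^{\max(0,\,1+a_j-a_i)}$ while the diagonal entries sit in $1+\mathfrak{P}$.

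**From the matrix block form to the condition on $\lambda$.** Recall that $\mathscr{P}(1)=f_{\mathscr{K}}^{-1}(\mathcal{U})$, so under reduction mod $\mathscr{K}(1)$ the group $\mathscr{P}(1)$ is exactly the set of matrices in $\mathscr{K}$ whose image in $\mathcal{G}=\mathcal{G}_n$ lies in the unipotent radical $\mathcal{U}$ of the standard parabolic $\mathcal{P}=\mathcal{M}\mathcal{U}$; concretely, $\mathscr{P}(1)$ consists of those $g\in\mathscr{K}$ that are block-upper-triangular modulo $\mathfrak{P}$ with identity diagonal blocks, with respect to the block decomposition indexed by the $\sum_i m_i$ blocks of sizes $n_1,\ldots,n_1,n_2,\ldots$ defining $\mathcal{M}=\prod_i(\mathcal{G}_{n_i})^{m_i}$. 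Now I intersect: $(\mathscr{P}\cap\lambda^{-1}\mathscr{K}(1)\lambda)$ is contained in $\mathscr{P}(1)$ precisely when every entry that $\lambda^{-1}\mathscr{K}(1)\lambda$ allows to be a unit (modulo $\mathfrak{P}$) either sits on the block-diagonal of $\mathcal{M}$ or lies in the block-upper-triangular part — equivalently, no entry in the strict block-lower-triangular part of the $\mathcal{M}$-pattern, and no off-block-diagonal entry within a single $\mathcal{G}_{n_i}$-block, is forced to be a unit. The $(i,j)$-entry of $\lambda^{-1}\mathscr{K}(1)\lambda\cap\mathscr{K}$ can be a unit exactly when $a_j-a_i\ge 0$ for $i=j$ (automatic) and when $a_j-a_i\ge 0$ is compatible with the entry not being in $\mathfrak{P}$, i.e. when $1+a_j-a_i\le 0$, that is $a_i\ge a_j+1$ — wait, more carefully: the entry lies in $\mathfrak{P}^{\max(0,1+a_j-a_i)}$, so it can fail to lie in $\mathfrak{P}$ exactly when $1+a_j-a_i\le 0$, i.e. $a_i>a_j$, in addition to the diagonal case $i=j$. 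So the surviving condition becomes: whenever $a_i>a_j$ for $i\ne j$, the pair $(i,j)$ must lie in the block-upper-triangular part of the $\mathcal{M}$-pattern (and in particular $i,j$ must lie in different $\mathcal{G}_{n_i}$-blocks). I would spell this out as: $\lambda$ survives iff $a_i=a_j$ whenever $i,j$ belong to the same block of the partition $(n_1,\ldots,n_1,n_2,\ldots)$, i.e. iff $\lambda=\lambda(a_1,\ldots,a_m)$ has the displayed constant-on-blocks form. (The block-triangularity constraint coming from lower-triangular positions across distinct blocks imposes nothing further once $\lambda$ is constant on blocks, since then $a_i-a_j$ can be arbitrary across blocks and the entry in $\mathfrak{P}^{\max(0,1-(a_i-a_j))}$ is allowed to be a unit only when it is in an upper position anyway — this is where one checks both directions.)

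**Executing the two directions.** For the "if" direction I take $\lambda=\lambda(a_1,\ldots,a_m)$ and verify directly that any $g\in\mathscr{P}\cap\lambda^{-1}\mathscr{K}(1)\lambda$ reduces mod $\mathfrak{P}$ to a block-upper-triangular matrix with identity diagonal blocks: within a block $a_j-a_i=0$ so the off-diagonal entries land in $\mathfrak{P}$ (giving identity diagonal blocks), and across blocks the membership in $\mathscr{P}$ already pins the reduction into $\mathcal{P}$, while $\lambda^{-1}\mathscr{K}(1)\lambda$ contributes nothing that could make the image leave $\mathcal{U}$. For the "only if" direction, I argue by contraposition: if $a_i\ne a_j$ for some $i,j$ in the same $\mathcal{G}_{n_i}$-block, say $a_i>a_j$, then the elementary matrix $\mathbf{1}+t E_{ij}$ with suitable $t$ lies in $\mathscr{P}\cap\lambda^{-1}\mathscr{K}(1)\lambda$ but its reduction mod $\mathfrak{P}$ is a non-identity unipotent inside a single $\mathcal{G}_{n_i}$-factor, hence outside $\mathcal{U}$, so $g\notin\mathscr{P}(1)$ and $\lambda$ does not survive. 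I expect the genuinely fiddly point — the "main obstacle" — to be bookkeeping the index ranges correctly: carefully matching the linear index $i\in\{1,\ldots,n\}$ to its position (which of the $m=\sum m_i$ blocks, and which $\mathcal{G}_{n_i}$-factor) so that the constant-on-blocks pattern comes out exactly as displayed, and checking that the cross-block lower-triangular positions really do impose no extra constraint. This is the "straight-forward matrix-computation" the statement refers to; once the indexing is set up, each verification is a one-line valuation estimate.
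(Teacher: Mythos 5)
Your computation is correct and is precisely the ``straight-forward matrix-computation'' that the paper leaves implicit (its proof environment is empty): the containment $\mathscr{P}\cap\lambda^{-1}\mathscr{K}(1)\lambda\subset\mathscr{P}(1)$ only bites at the off-diagonal positions inside the diagonal blocks of the $\mathcal{M}$-pattern (lower cross-block positions are already forced into $\mathfrak{P}$ by membership in $\mathscr{P}$, and upper cross-block positions are unconstrained by $\mathscr{P}(1)$), which forces $a_i=a_j$ within each block, and your two directions (the valuation estimate for ``if'', the elementary matrix $1+tE_{ij}$ with $t$ a unit for ``only if'') are exactly right. One sentence in your middle paragraph is garbled --- an entry that is allowed to be a unit and sits \emph{on} the block-diagonal of $\mathcal{M}$ but off the main diagonal is precisely the obstruction to surviving, not an acceptable case --- but your subsequent reformulation and the executed argument have this the right way around, so it is a slip of prose rather than of mathematics.
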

\begin{proof}\end{proof}
The obvious next step is to understand
the group $f_{\mathscr{K}}(d\mathscr{P}d^{-1} \cap
\mathscr{K})$. Firstly, we write $d= s\lambda$ and remark
\begin{equation*}
f_{\mathscr{K}}((s\lambda)\mathscr{P}(s\lambda)^{-1} \cap
\mathscr{K}) = s [f_{\mathscr{K}}(\lambda\mathscr{P}\lambda^{-1} \cap
\mathscr{K})] s^{-1}
\end{equation*}
because $\mathscr{K}$ is stable under $S_n$-conjugation 
and $f_{\mathscr{K}}$ commutes
with $S_n$-conjugation. Thus we have to concentrate on the shape of the
group $\mathcal{P}_{\lambda}\coloneqq f_{\mathscr{K}}(\lambda\mathscr{P}\lambda^{-1} \cap
\mathscr{K})$ with $\lambda= \lambda(a_1, \ldots, a_{m})$.
\begin{obs}
$\mathcal{P}_{\lambda}$ is a parabolic subgroup of $\mathcal{G}$ with 
Levi-factor
$\mathcal{M} = \mathscr{P}/\mathscr{P}(1)$. It's structure is determined by
the values 
\begin{equation*}
\operatorname{sign}(i,j) = \textbf{1}_{[0, \infty]}(a_i - a_j)
- \textbf{1}_{[- \infty, 0)}(a_i - a_j)
\end{equation*}
for $i\neq j \in \{1, \ldots, m\}$. (If e. g. $a_i \leq a_{i+1}$ for all $i$,
we get the standard parabolic subgroup $\mathcal{P} = f_{\mathscr{K}}(
\mathscr{P})$. If $a_i > a_{i+1}$ for all $i$, we get the opposite
$\overline{\mathcal{P}}$. It is possible to put this into a clumsy formula, but
we don't need that.)
\end{obs}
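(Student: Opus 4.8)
The plan is to reduce everything to an explicit matrix computation inside $\mathrm{GL}_n(\mathcal{O})$, using the fact --- established in the preceding Proposition --- that the surviving $\lambda=\lambda(a_1,\dots,a_m)$ is \emph{block-scalar}: if $d_1,\dots,d_m$ denote the sizes of the $m$ diagonal blocks that cut out $\mathscr{P}$ (so each $d_j$ is one of $n_1,\dots,n_k$, in the pattern dictated by $\mathcal{M}=\prod_{i=1}^k(\mathcal{G}_{n_i})^{m_i}$), then the $j$-th block of $\lambda$ is $\varpi^{a_j}$ times the identity of size $d_j$. First I would record the block description of $\mathscr{P}$ itself: since $\mathscr{P}=f_{\mathscr{K}}^{-1}(\mathcal{P})$ for the standard parabolic $\mathcal{P}$ with Levi $\mathcal{M}$, an element of $\mathscr{K}=\mathrm{GL}_n(\mathcal{O})$ lies in $\mathscr{P}$ iff, with respect to the $m$-block partition, its $(j,j)$-block is in $\mathrm{GL}_{d_j}(\mathcal{O})$, its $(j,j')$-block with $j<j'$ is in $\mathcal{O}$, and its $(j,j')$-block with $j>j'$ is in $\mathfrak{P}=\varpi\mathcal{O}$.

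Next I would conjugate by $\lambda$. Block-scalarity makes this transparent: the diagonal blocks are unchanged, while the $(j,j')$-block gets multiplied by $\varpi^{a_j-a_{j'}}$. Intersecting with $\mathscr{K}$ therefore forces the $(j,j')$-block into $\varpi^{a_j-a_{j'}}\mathcal{O}\cap\mathcal{O}=\varpi^{\max(a_j-a_{j'},0)}\mathcal{O}$ when $j<j'$, and into $\varpi^{a_j-a_{j'}+1}\mathcal{O}\cap\mathcal{O}=\varpi^{\max(a_j-a_{j'}+1,0)}\mathcal{O}$ when $j>j'$, the diagonal blocks remaining $\mathrm{GL}_{d_j}(\mathcal{O})$; that the whole matrix is then automatically invertible over $\mathcal{O}$ follows because, after reordering the blocks, it is block-triangular with full diagonal blocks, and surjectivity of $f_{\mathscr{K}}$ onto the claimed image is seen by the obvious entrywise lift. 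Applying $f_{\mathscr{K}}$ (reduction mod $\mathfrak{P}$), I would read off that $\mathcal{P}_\lambda$ is the subgroup of $\mathcal{G}$ with $\mathcal{G}_{d_j}$ in each diagonal block and, in position $(j,j')$, either the full space $\mathbb{M}_{d_j\times d_{j'}}(\mathbb{F}_q)$ or $0$, the block being full precisely when $a_j\le a_{j'}$ (for $j<j'$) or $a_j<a_{j'}$ (for $j>j'$) --- equivalently, precisely when $(a_j,j)\le_{\mathrm{lex}}(a_{j'},j')$.

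The two structural assertions then follow formally. The presence relation on the $m$ blocks is, by the last formulation, exactly the lexicographic total order on the distinct pairs $(a_j,j)$; hence it is transitive and total with no nontrivial ties, so the block-shape subgroup it describes --- having full diagonal blocks --- is a parabolic subgroup of $\mathcal{G}$ (conjugating the blocks into the lexicographic order exhibits it as a standard one), and its Levi factor is the block-diagonal subgroup $\prod_{j=1}^m\mathrm{GL}_{d_j}(\mathbb{F}_q)=\mathcal{M}=\mathscr{P}/\mathscr{P}(1)$. For the dependence on $\operatorname{sign}$: for $j<j'$ the $(j,j')$-block is present iff $a_{j'}\ge a_j$ iff $\operatorname{sign}(j',j)=1$, and for $j>j'$ it is present iff $a_j<a_{j'}$ iff $\operatorname{sign}(j,j')=-1$; so the shape of $\mathcal{P}_\lambda$, and therefore $\mathcal{P}_\lambda$ itself, is determined by the values $\operatorname{sign}(i,j)$. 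The two parenthetical examples drop out by substitution: $a_1\le\cdots\le a_m$ makes every above-diagonal block full and every below-diagonal block zero, giving $\mathcal{P}=f_{\mathscr{K}}(\mathscr{P})$, while $a_1>\cdots>a_m$ reverses this and gives the opposite $\overline{\mathcal{P}}$.

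There is no serious obstacle --- this is organised bookkeeping --- but the one point demanding care is the asymmetry between the above-diagonal and below-diagonal blocks of $\mathscr{P}$ (entries in $\mathcal{O}$ versus in $\varpi\mathcal{O}$), which produces the extra $+1$ in the $j>j'$ computation. This is exactly what breaks the naive $a_j\leftrightarrow a_{j'}$ symmetry and makes the ties $a_j=a_{j'}$ recover $\mathcal{P}$ rather than a larger subgroup; keeping the $\max(\cdot,0)$'s and the strict-versus-nonstrict inequalities consistent is the only thing that needs attention.
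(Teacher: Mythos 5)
Your proposal is correct and is exactly the computation the paper has in mind: its proof of this Observation is just the remark that it ``follows from the definitions and a matrix calculation,'' and your block-by-block conjugation of $\mathscr{P}$ by the block-scalar $\lambda$, intersection with $\mathscr{K}$, and reduction via $f_{\mathscr{K}}$ is that calculation carried out in full, with the $\mathcal{O}$-versus-$\mathfrak{P}$ asymmetry handled correctly. No gaps.
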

\begin{proof}
This follows from the definitions and a matrix calculation.
\end{proof}

The next thing to understand is the second step in the definition of
$F^{G}_{\mathscr{K}(q)d\mathscr{P}(q)}$:
\begin{prop}
Conjugation by $d = s\lambda$ from 
$\mathscr{P}\cap d^{-1}\mathscr{K}d$ to
$d\mathscr{P}d^{-1} \cap
\mathscr{K}$ steps down to conjugation by $s$ from the Levi-factor
$\mathcal{M}$ of 
$f_{\mathscr{P}}
(\mathscr{P}\cap d^{-1}\mathscr{K}d)$
to the Levi-factor
$d\mathcal{M}d^{-1} = s\mathcal{M}s^{-1}$
of $f_{\mathscr{K}}(d\mathscr{P}d^{-1} \cap
\mathscr{K})$:
\begin{equation*}
\xymatrix{
\mathscr{P}\cap d^{-1}\mathscr{K}d \ar[r]^{\operatorname{Int}(d)}
\ar[d]_{f_{\mathscr{P}}}&
d\mathscr{P}d^{-1} \cap
\mathscr{K}\ar[d]^{f_{\mathscr{K}}}\\
\mathcal{M} \ar@{=}[d]
& s\mathcal{P}_{\lambda}s^{-1}\ar@{->>}[d]^{
p\coloneqq
\text{mod }s\mathcal{U}_{\lambda}s^{-1}}\\
\mathcal{M}\ar[r]_{\operatorname{Int}(s)} & s\mathcal{M}s^{-1}
}
\end{equation*}
commutes ($\mathcal{U}_{\lambda}$ denotes the unipotent radical of
$\mathcal{P}_{\lambda}$).
\end{prop}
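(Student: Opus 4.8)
The plan is to reduce at once to the case $s=1$ and then to observe that the square unwinds to the triviality that conjugation by $\lambda$ is the identity on the diagonal super-blocks modulo $\mathfrak{P}$. Since $\mathscr{K}$ is $S_n$-stable and $f_{\mathscr{K}}$ commutes with $S_n$-conjugation, one has $(s\lambda)^{-1}\mathscr{K}(s\lambda)=\lambda^{-1}\mathscr{K}\lambda$ and $(s\lambda)\mathscr{P}(s\lambda)^{-1}\cap\mathscr{K}=s(\lambda\mathscr{P}\lambda^{-1}\cap\mathscr{K})s^{-1}$, so the diagram for $d=s\lambda$ is obtained from the one for $d=\lambda$ by post-composing the top row and the right-hand column with $\operatorname{Int}(s)$ (using $f_{\mathscr{K}}(s\,\cdot\,s^{-1})=s\,f_{\mathscr{K}}(\cdot)\,s^{-1}$ and that reduction modulo $s\mathcal{U}_\lambda s^{-1}$ is $\operatorname{Int}(s)$-conjugate to reduction modulo $\mathcal{U}_\lambda$), while the left-hand column is unchanged and the bottom row acquires exactly this $\operatorname{Int}(s)$. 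Hence it suffices to treat $d=\lambda=\lambda(a_1,\dots,a_m)$, for which the bottom arrow is $\operatorname{id}_{\mathcal{M}}$, so that the claim becomes the identity $p\circ f_{\mathscr{K}}\circ\operatorname{Int}(\lambda)=f_{\mathscr{P}}$ of homomorphisms $\mathscr{P}\cap\lambda^{-1}\mathscr{K}\lambda\to\mathcal{M}$.

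Next I would pass to the $m\times m$ block decomposition adapted to $\mathcal{M}=\prod_{i=1}^k(\mathcal{G}_{n_i})^{m_i}$, in which $\mathscr{P}$ is the set of matrices of $\mathscr{K}$ whose reduction mod $\mathfrak{P}$ is block-upper-triangular, $\mathscr{P}(1)$ is the subgroup of $\mathscr{P}$ where moreover the diagonal blocks reduce to $\mathbf{1}$, and both $f_{\mathscr{P}}$ and the Levi-quotient projection $p$ of any parabolic with Levi $\mathcal{M}$ are computed by ``keep the diagonal blocks and reduce mod $\mathfrak{P}$''. Because $\lambda$ is the scalar $\varpi^{a_i}$ on the $i$-th block, conjugation by $\lambda$ carries $(X_{ij})$ to $(\varpi^{a_i-a_j}X_{ij})$, and in particular it fixes each diagonal block $X_{ii}$ exactly, since $a_i-a_i=0$.

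Then $p$ is defined on $f_{\mathscr{K}}(\lambda X\lambda^{-1})$ because $\lambda X\lambda^{-1}\in\lambda\mathscr{P}\lambda^{-1}\cap\mathscr{K}$ gives $f_{\mathscr{K}}(\lambda X\lambda^{-1})\in\mathcal{P}_\lambda$ immediately; a short valuation count in the blocks — the least entry-valuation of $X_{ij}$ is $\ge\max(0,a_j-a_i)$ for $i<j$ and $\ge\max(1,a_j-a_i)$ for $i>j$ — recovers, if one wants it, the explicit shape of $\mathcal{P}_\lambda$ recorded in the Observation above. Now for any $X\in\mathscr{P}\cap\lambda^{-1}\mathscr{K}\lambda$ the $(i,i)$-block of $\lambda X\lambda^{-1}$ equals $\varpi^{0}X_{ii}=X_{ii}$, so, $p$ being the block-diagonal projection composed with reduction mod $\mathfrak{P}$, we get $p\bigl(f_{\mathscr{K}}(\lambda X\lambda^{-1})\bigr)=(X_{ii}\bmod\mathfrak{P})_i=f_{\mathscr{P}}(X)$, which is precisely the asserted commutativity; in particular $\operatorname{Int}(\lambda)$ automatically carries $\mathscr{P}(1)\cap\lambda^{-1}\mathscr{K}\lambda$ into $\mathcal{U}_\lambda$ modulo $\mathscr{K}(1)$ (the diagonal blocks staying $\equiv\mathbf{1}$), so everything descends to the quotients drawn.

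I do not expect a serious obstacle here: the content of the proposition is nothing more than the remark that $\lambda$-conjugation leaves the diagonal super-blocks untouched mod $\mathfrak{P}$, and the only care required is organisational — fixing the block coordinates, identifying $p$ and $f_{\mathscr{P}}$ with the block-diagonal projection followed by reduction mod $\mathfrak{P}$, and confirming that $f_{\mathscr{K}}(\lambda X\lambda^{-1})$ genuinely lands in $\mathcal{P}_\lambda$ so that the composite in the square is well defined on the domains and codomains shown.
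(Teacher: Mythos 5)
Your argument is correct and is essentially the paper's own proof: both factor $\operatorname{Int}(d)=\operatorname{Int}(s)\circ\operatorname{Int}(\lambda)$, use that $f_{\mathscr{K}}$ commutes with $S_n$-conjugation to strip off $s$, and reduce to checking that the induced map $\mathcal{M}\to\mathcal{M}$ coming from $\operatorname{Int}(\lambda)$ is the identity. The only difference is that the paper declares this last point ``elementary to check'' while you carry out the block computation ($\lambda$-conjugation scales the $(i,j)$-block by $\varpi^{a_i-a_j}$ and hence fixes the diagonal blocks), which is exactly the intended verification.
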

\begin{proof}
This becomes clear as soon as we write out the factorisation 
$\operatorname{Int}(d) = \operatorname{Int}(s)\circ \operatorname{Int}(\lambda)$
. 
Then, as $f_{\mathscr{K}}$ commutes with $\operatorname{Int}(s)$, we have
\begin{equation*}
\xymatrix{
\mathscr{P}\cap d^{-1}\mathscr{K}d \ar@{}[r]|{=} & 
\mathscr{P}\cap \lambda^{-1}\mathscr{K}\lambda\ar[r]^{\operatorname{Int}
(\lambda)}\ar[d]_{f_{\mathscr{P}}}
& \lambda\mathscr{P}\lambda^{-1}\cap \mathscr{K}\ar[d]_{f_{\mathscr{K}}}
\ar[r]^{\operatorname{Int}
(s)}&
s\lambda\mathscr{P}\lambda^{-1}s^{-1}\cap \mathscr{K}
\ar[d]^{f_{\mathscr{K}}}\\
& \mathcal{M}
\ar@{=}[d]
&  \mathcal{P}_{\lambda}\ar@{->>}[d]
& s\mathcal{P}_{\lambda}s^{-1}\ar@{->>}[d]\\
&\mathcal{M} \ar@{.>}[r]_{q}&\mathcal{M} \ar[r]_{\operatorname{Int}(s)}&s\mathcal{M} s^{-1}&
}
\end{equation*}
It is elementary to check that $q$ is the identity.
\end{proof}
We conclude that every summand in the Mackey-decomposition must be 
of the form
\begin{equation*}
\textbf{i}^{\mathcal{G}}_{\mathcal{M}^s \subset \mathcal{Q}^s} 
(\overline{\rho}^s),
\end{equation*}
where $\mathcal{Q}$ is some parabolic subgroup of $\mathcal{G}$ which
admits $\mathcal{M}$ as Levi-factor and $s$ an element 
of $S_n\subset \mathcal{G}$. This is isomorphic
to $\textbf{i}^{\mathcal{G}}_{\mathcal{M} \subset \mathcal{Q}} 
(\overline{\rho})$, hence by the Howlett-Lehrer result
we have
\begin{thm}\label{sdhfohwigobhbweorwgbofegbwr}
\begin{equation*}
\textit{r}^{G}_{\mathscr{K}}\circ
\textit{i}^{G}_{\mathscr{P}}(\overline{\rho}) \cong
\bigoplus_{\text{finite}}
\bf{i}^{\mathcal{G}}_{\mathcal{M} } 
(\overline{\rho}).
\end{equation*}
\end{thm}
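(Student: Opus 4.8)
The plan is to assemble the theorem directly from the three preceding results in this section — Proposition~\ref{sofhiowbeviobweoifbwoeihfewhfowehoigf}.iii (the parahoric Mackey-decomposition), the explicit ``survival'' criterion, and the commutative-diagram Proposition describing the second step of $F^{G}_{\mathscr{K}(q)d\mathscr{P}(q)}$ — together with the Howlett--Lehrer theorem. First I would apply Proposition~\ref{sofhiowbeviobweoifbwoeihfewhfowehoigf}.iii to write
\begin{equation*}
\textit{r}^{G}_{\mathscr{K}}\circ\textit{i}^{G}_{\mathscr{P}}(\overline{\rho})
\cong \bigoplus_{d\in\mathscr{K}\backslash G/\mathscr{P}}
F^{G}_{\mathscr{K}(q)d\mathscr{P}(q)}(\overline{\rho}),
\end{equation*}
and, using part iv) of that Proposition, take the representatives $d$ inside $D=S_n\cdot\Lambda$, so $d=s\lambda$. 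By the survival criterion established just above, only finitely many cosets $d$ contribute a nonzero summand — precisely those with $\lambda=\lambda(a_1,\dots,a_m)$ of the block-constant form displayed, and even among these, two values of $d$ that differ by an element of $\Lambda$ acting trivially on $\overline{\rho}$ give the same summand, so after passing to this finite set the direct sum is genuinely finite (this is the content of writing $\bigoplus_{\text{finite}}$).

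Next I would identify each surviving summand. For a surviving $d=s\lambda$ the parabolic-restriction step is trivial, so $F^{G}_{\mathscr{K}(q)d\mathscr{P}(q)}(\overline{\rho})$ reduces to: inflate $\overline{\rho}$ from $\mathcal{M}$ along the unipotent radical $\mathcal{U}_\lambda$ of the parabolic $\mathcal{P}_\lambda$ (Observation: $\mathcal{P}_\lambda$ is a parabolic of $\mathcal{G}$ with Levi $\mathcal{M}$), transport by $\operatorname{Int}(s)$, and parabolically induce from $s\mathcal{P}_\lambda s^{-1}$ up to $\mathcal{G}$. By the commutative-diagram Proposition, conjugation by $d$ descends to conjugation by $s$ on the Levi factors, with the auxiliary map $q$ being the identity; hence the summand equals $\textbf{i}^{\mathcal{G}}_{\mathcal{M}^s\subset\mathcal{Q}^s}(\overline{\rho}^s)$, where $\mathcal{Q}=\mathcal{P}_\lambda$. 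Conjugating the whole induction datum by $s^{-1}$ inside $\mathcal{G}$ leaves the induced representation unchanged up to isomorphism, so this is isomorphic to $\textbf{i}^{\mathcal{G}}_{\mathcal{M}\subset\mathcal{Q}}(\overline{\rho})$.

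Finally I would invoke the Howlett--Lehrer theorem (cited in the definition of the Harish-Chandra functors): the isomorphism class of $\textbf{i}^{\mathcal{G}}_{\mathcal{M}\subset\mathcal{Q}}(\overline{\rho})$ depends only on the Levi $\mathcal{M}$, not on the parabolic $\mathcal{Q}$ containing it. Therefore every surviving summand is isomorphic to $\textbf{i}^{\mathcal{G}}_{\mathcal{M}}(\overline{\rho})$, and the direct sum over the finite surviving set gives exactly $\bigoplus_{\text{finite}}\textbf{i}^{\mathcal{G}}_{\mathcal{M}}(\overline{\rho})$, as claimed. The main obstacle in fully writing this out is bookkeeping rather than conceptual: one must be careful that the finite indexing set of surviving cosets is stated precisely (which is why the statement only records finiteness, not the exact multiplicity), and one must check cleanly that the map $q$ in the diagram is the identity and that $s$-conjugation really does not change the isomorphism type of the Harish-Chandra induced module — both of which are the elementary verifications already flagged in the preceding two proofs.
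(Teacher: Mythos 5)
Your proposal follows exactly the route the paper intends: the parahoric Mackey decomposition (Proposition~\ref{sofhiowbeviobweoifbwoeihfewhfowehoigf}.iii--iv), the survival criterion, the identification of each surviving summand as $\mathbf{i}^{\mathcal{G}}_{\mathcal{M}^s\subset\mathcal{Q}^s}(\overline{\rho}^s)\cong\mathbf{i}^{\mathcal{G}}_{\mathcal{M}\subset\mathcal{Q}}(\overline{\rho})$ via the commutative diagram, and finally Howlett--Lehrer to erase the dependence on $\mathcal{Q}$. That is precisely the chain of reasoning the paper assembles in the paragraphs preceding the theorem (its formal proof environment is empty), so on the main content your write-up is a faithful reconstruction.

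The one place where you go beyond the paper is the justification of the subscript ``finite'', and that step as written does not work. The surviving double cosets are the $\mathscr{K}s\lambda(a_1,\dots,a_m)\mathscr{P}$ with arbitrary $a_j\in\mathbb{Z}$, and these are genuinely pairwise distinct in infinitely many cases: for instance the central elements $\varpi^a\cdot\mathbf{1}$ all survive, and $\mathscr{K}\varpi^a\mathscr{P}\neq\mathscr{K}\varpi^b\mathscr{P}$ for $a\neq b$ since the determinants land in disjoint cosets $\varpi^{na}\mathcal{O}^{\times}$. Your claim that two values of $d$ differing by an element of $\Lambda$ acting trivially on $\overline{\rho}$ ``give the same summand'' only shows the summands are \emph{isomorphic}; it does not let you collapse the index set of a direct sum, and $\bigoplus_{i\in I}W$ for infinite $I$ is not $W$. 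So the indexing set is countably infinite, not finite, and no argument along these lines can make it finite. This does not damage the way the theorem is used later (only the isomorphism type of each summand matters in the progenerator argument), but you should either drop the finiteness claim or state the index set honestly rather than manufacture a justification for it.
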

\begin{proof}\end{proof}
We need another observation:
\begin{lem}\label{ashfiowehgiwehighweihfioewhriweh}
Let $\mathscr{P}, \mathscr{Q}$ be standard parahoric subgroups
and $\pi$ be a $\mathscr{P}$-rep\-re\-sen\-ta\-tion inflated from a cuspidal
representation $\overline{\pi}$ of $\mathcal{M}=\mathscr{P}/\mathscr{P}(1)$. 
Assume that one summand $F^G_{\mathscr{Q}(q)d
\mathscr{P}(q)}$ in the Mackey-decomposition of 
$\textit{r}^{G}_{\mathscr{Q}}\circ
\textit{i}^{G}_{\mathscr{P}}(\overline{\pi})$
contains a cuspidal representation, say $\overline{\sigma}$, of 
$\mathcal{N} = \mathscr{Q}/\mathscr{Q}(1)$.
Then $\mathscr{Q}=\mathscr{P}^g$ and $\sigma \cong \pi^g$ for
some $g\in G$, in particular
we have
\begin{equation*}
\textit{i}_{\mathscr{P}}^G(\pi) \cong
\textit{i}_{\mathscr{Q}}^G(\sigma) \quad\text{ and }\quad
\bigoplus_{I} 
\begin{bf}i\end{bf}_{\mathcal{M}}^{\mathcal{G}}(\overline{\pi}) \cong
\bigoplus_{I'} 
\begin{bf}i\end{bf}_{\mathcal{N}}^{\mathcal{G}}(\overline{\sigma})
\end{equation*}
for finite index sets $I,I'$.
\end{lem}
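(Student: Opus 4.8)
The plan is to read off the two cuspidality hypotheses as \emph{triviality} statements about the first and the last step of the functor $F^G_{\mathscr{Q}(q)d\mathscr{P}(q)}$, and then to combine these into the desired equality of parahoric subgroups, from which everything else follows formally.

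Recall from Proposition~\ref{sofhiowbeviobweoifbwoeihfewhfowehoigf}.iii that $F^G_{\mathscr{Q}(q)d\mathscr{P}(q)}$ is the composite of: parabolic restriction $\mathbf{r}^{\mathcal{M}}_{\mathcal{A}}$ along the parabolic $\mathcal{A}\mathcal{V}\coloneqq f_{\mathscr{P}}(\mathscr{P}\cap d^{-1}\mathscr{Q}d)$ of $\mathcal{M}$; the conjugation equivalence $\operatorname{Int}(d)$ onto the Levi $\mathcal{B}$ of the parabolic $\mathcal{Q}_0\coloneqq f_{\mathscr{Q}}(d\mathscr{P}d^{-1}\cap\mathscr{Q})$ of $\mathcal{N}$; and parabolic induction $\mathbf{i}^{\mathcal{N}}_{\mathcal{B}}$. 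Since $\overline{\pi}$ is cuspidal, $\mathbf{r}^{\mathcal{M}}_{\mathcal{A}}(\overline{\pi})$ vanishes unless $\mathcal{A}=\mathcal{M}$; as the summand is assumed to be non-zero we get $f_{\mathscr{P}}(\mathscr{P}\cap d^{-1}\mathscr{Q}d)=\mathcal{M}$, equivalently $\mathscr{P}=\mathscr{P}(1)\,(\mathscr{P}\cap d^{-1}\mathscr{Q}d)$. Conjugating by $d$ and writing $P\coloneqq d\mathscr{P}d^{-1}$, $Q\coloneqq\mathscr{Q}$ this reads $P=P(1)\,(P\cap Q)$, and a short computation with lattice chains (of the kind already used in the survival analysis of this section) shows that this forces $P\subseteq Q$. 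With $\mathcal{A}=\mathcal{M}$ the restriction step is the identity, so $F^G_{\mathscr{Q}(q)d\mathscr{P}(q)}(\overline{\pi})\cong\mathbf{i}^{\mathcal{N}}_{\mathcal{B}}(\tau)$ where $\tau\coloneqq\operatorname{Int}(d)(\overline{\pi})$ is an irreducible cuspidal representation of $\mathcal{B}$.

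By hypothesis $\overline{\sigma}$ is a subquotient of $\mathbf{i}^{\mathcal{N}}_{\mathcal{B}}(\tau)$ and $\overline{\sigma}$ is (super)cuspidal; by the defining property of (super)cuspidality this is impossible unless $\mathcal{B}=\mathcal{N}$, in which case $\mathbf{i}^{\mathcal{N}}_{\mathcal{N}}(\tau)=\tau$ and hence $\overline{\sigma}\cong\tau$. The equality $\mathcal{B}=\mathcal{N}$ says $f_{\mathscr{Q}}(d\mathscr{P}d^{-1}\cap\mathscr{Q})=\mathcal{N}$, i.e.\ $Q=Q(1)\,(P\cap Q)$, which by the same computation forces $Q\subseteq P$. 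Therefore $P=Q$, that is $d\mathscr{P}d^{-1}=\mathscr{Q}$; with $g\coloneqq d$ this is the relation $\mathscr{Q}=\mathscr{P}^{g}$. Conjugation by $g$ then carries $\mathscr{P}(1)$ to $\mathscr{Q}(1)$ and induces the isomorphism $\mathcal{M}\to\mathcal{N}$ underlying $\operatorname{Int}(d)$, so $\tau=\overline{\pi}^{\,g}$ and hence $\sigma\cong\pi^{g}$; this is the first assertion. Since $\mathscr{Q}=\mathscr{P}^{g}$ and $\sigma\cong\pi^{g}$, compatibility of $\operatorname{ind}$ with inner automorphisms of $G$ gives $\textit{i}^G_{\mathscr{Q}}(\sigma)=\operatorname{ind}^G_{\mathscr{P}^g}(\pi^g)\cong\operatorname{ind}^G_{\mathscr{P}}(\pi)=\textit{i}^G_{\mathscr{P}}(\pi)$. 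Applying the exact functor $\textit{r}^G_{\mathscr{K}}$ to this isomorphism and using \thmref{sdhfohwigobhbweorwgbofegbwr} for $(\mathscr{P},\overline{\pi})$ and for $(\mathscr{Q},\overline{\sigma})$ (the same computation applies to any cuspidal datum inflated to a standard parahoric) yields $\bigoplus_{I}\mathbf{i}^{\mathcal{G}}_{\mathcal{M}}(\overline{\pi})\cong\bigoplus_{I'}\mathbf{i}^{\mathcal{G}}_{\mathcal{N}}(\overline{\sigma})$ for the resulting finite index sets $I,I'$.

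The step I expect to be delicate is the passage from ``$\overline{\sigma}$ cuspidal occurs in $\mathbf{i}^{\mathcal{N}}_{\mathcal{B}}(\tau)$'' to ``$\mathcal{B}=\mathcal{N}$''. When $\overline{\sigma}$ (equivalently $\overline{\pi}$) is \emph{supercuspidal} — which is the situation in all applications in this paper — this is immediate from the definition; for a genuinely cuspidal datum one needs in addition that a cuspidal constituent of a Harish-Chandra induced cuspidal representation of a product of groups $\operatorname{GL}_m(q)$ can only come from the ambient group itself, so that the relevant Harish-Chandra-series bookkeeping does not degenerate. The other point requiring care is the two elementary lattice-chain computations translating $P=P(1)(P\cap Q)$ and $Q=Q(1)(P\cap Q)$ into $P\subseteq Q$ and $Q\subseteq P$ respectively; everything else is formal.
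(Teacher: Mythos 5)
Your proof is correct and follows essentially the same route as the paper: the paper encodes your two triviality conditions in Vign\'eras's root-subset bookkeeping, where $J=J\cap d^{-1}L$ and $L=L\cap dJ$ immediately yield $L=dJ$, i.e.\ $\mathscr{Q}=d\mathscr{P}d^{-1}$ --- this is precisely the content of your two deferred lattice-chain computations, so nothing is missing beyond what the paper itself delegates to \cite{vigneras2003schur}, 4.1.1/4.1.4. On the point you flag as delicate: the hypothesis is only ever used with $\overline{\sigma}$ occurring as a \emph{subrepresentation} of the summand, so adjunction gives $\mathbf{r}^{\mathcal{N}}_{\mathcal{B}}(\overline{\sigma})\neq 0$ and hence $\mathcal{B}=\mathcal{N}$ already for merely cuspidal $\overline{\sigma}$; your proposed fallback for genuine subquotients would in fact fail in the modular setting (cuspidal non-supercuspidal constituents of proper Harish-Chandra inductions do exist), so the subrepresentation reading is the one to use.
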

\begin{proof}
We will prove this using the language of \cite{vigneras2003schur}, 
Section 4.1.1 and 4.1.4:
In this notation, $\mathscr{P}= P_J$ and $\mathscr{Q}=P_L$ for 
two finite, proper subsets $J, L$ of a fixed basis $\Pi$ for the affine simple
roots defined by $G$ and the standard torus. 
Then $F^G_{\mathscr{Q}(q)d
\mathscr{P}(q)}$ is denoted by $F^d_{L, J}$, where $d$ is now taken from
some distinguished set of representatives $D_{L, J}$ for 
$W_L\backslash W_{\text{aff}}(G)/W_J \cong P_L\backslash G /P_J$.\\
The assumptions of the lemma imply that both 
$\begin{bf}r\end{bf}^{J}_{J\cap d^{-1}L}$ and 
$\begin{bf}i\end{bf}^{L}_{L\cap dJ}$ are trivial, i. e. $J = J\cap d^{-1}L$
and $L = L\cap dJ$. This implies $L = dJ$. As $P_{dJ} = dP_Jd^{-1}$ (4.1.1.(c)),
this implies that $\mathscr{P}$ and $\mathscr{Q}$ are conjugate by $d$.
The remaining middle term $\operatorname{Int}(d)$ in the definition of
$F^d_{L, J}$ is the reduction of the conjugation 
$\operatorname{Int}(d):\mathscr{P}\rightarrow \mathscr{Q}$ by 
4.1.1.(e).
As this maps irreducible representations to irreducible representations,
we see that $\overline{\sigma}$ is actually all of 
$F^d_{L,J}(\overline{\pi})$ and hence 
that $\sigma$ and $\pi$ are $d$-conjugate. The isomorphism
of the parahorically induced representations is therefore established.
The last claim results from applying the functor $\textit{i}^G_{\mathscr{K}}$
and Theorem \ref{sdhfohwigobhbweorwgbofegbwr}.
\end{proof}

\begin{lem}\label{askdhfwbvoiweboifwefhiowegbfiwoe}
Let $\mathcal{P}=\mathcal{M}\mathcal{U}$ and 
$\mathcal{Q}=\mathcal{N}\mathcal{V}$ be two standard parabolic
subgroups of $\mathcal{G}$ and assume that 
\begin{equation*}
\operatorname{Hom}_{\mathcal{G}}
\left(\begin{bf}i\end{bf}_{\mathcal{M}}^{\mathcal{G}}(\pi)
,\begin{bf}i\end{bf}_{\mathcal{N}}^{\mathcal{G}}(\sigma)\right)\neq 0
\end{equation*}
for a cuspidal $\mathcal{M}$-representation $\pi$
and a cuspidal $\mathcal{N}$-representation $\sigma$.
Then $\mathcal{M} = \mathcal{N}^s$ and $\pi \cong \sigma^s$ for some
$s\in S_n$. We can even replace $s$ by some suitable
$t\in S_n$ which acts merely by rearranging the 
blocks in the following sense: Write
\begin{equation*}
\mathcal{M}= \prod_I \mathcal{G}_{n_i}\text{ and } 
\mathcal{N}= \prod_J \mathcal{G}_{m_j}.
\end{equation*}
Define $\Theta^I_k= \{i\in I|n_i = k\}$  and 
$\Theta^J_k= \{j\in J|m_j = k\}$ for $k\in \mathbb{N}$. By Lemma
\ref{svbjuwebewoigbofgoewibfgoiwehbfg},
$\#\Theta^I_k = \#\Theta^J_k$ and we can define $u\coloneqq
\# I = \# J$. Now define the subset
$T\bigl((m_j)_{j\in J}, (n_i)_{i\in I}\bigr) \subset\operatorname{Bij}_{\operatorname{Sets}}(\{1, \ldots, u\},
\{1, \ldots, u\})$
which fulfill $t(\Theta^J_k) = \Theta^I_k$ for all $k$.
There is a canonical way to 
embed $\imath: T\hookrightarrow S_n$ such that conjugation with $\imath(t)$ 
coincides with the map
\begin{equation*}
\mathcal{N}\rightarrow\mathcal{M}\qquad
(x_j)_{j\in J} \mapsto (x_{t(j)})_{j\in J},
\end{equation*}
where each $x_j$ is an element of $\mathcal{G}_{m_j}$.
\end{lem}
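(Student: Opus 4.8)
The plan is to prove the two assertions in turn: first the existence of some $s\in S_n$ with $\mathcal{M}=\mathcal{N}^s$ and $\pi\cong\sigma^s$, and then the refinement that such an $s$ can be chosen in the image of $\imath$.

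For the first assertion I would run the standard Mackey argument for Harish-Chandra functors. Applying the adjunction of Proposition~\ref{wnogivbweiogbiowebghtnwehgfoibweoigbvoibihnoi}.iv, the space $\operatorname{Hom}_{\mathcal{G}}\bigl(\begin{bf}i\end{bf}_{\mathcal{M}}^{\mathcal{G}}(\pi),\begin{bf}i\end{bf}_{\mathcal{N}}^{\mathcal{G}}(\sigma)\bigr)$ becomes $\operatorname{Hom}_{\mathcal{M}}\bigl(\pi,\begin{bf}r\end{bf}_{\mathcal{M}}^{\mathcal{G}}\begin{bf}i\end{bf}_{\mathcal{N}}^{\mathcal{G}}(\sigma)\bigr)$, and I would expand $\begin{bf}r\end{bf}_{\mathcal{M}}^{\mathcal{G}}\circ\begin{bf}i\end{bf}_{\mathcal{N}}^{\mathcal{G}}$ by the Mackey decomposition of Proposition~\ref{wnogivbweiogbiowebghtnwehgfoibweoigbvoibihnoi}.iii, taking the double-coset representatives $g\in\mathcal{Q}\backslash\mathcal{G}/\mathcal{P}$ inside $S_n$ (such a set exists, $\mathcal{P},\mathcal{Q}$ being standard parabolics). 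Since $\sigma$ is cuspidal, every summand whose inner Harish-Chandra restriction step is non-trivial vanishes; for a surviving $g$ that step is the identity, so the summand is $\begin{bf}i\end{bf}_{\mathcal{N}^g}^{\mathcal{M}}(\sigma^g)$ with $\mathcal{N}^g$ a Levi subgroup of $\mathcal{M}$. Thus $\operatorname{Hom}_{\mathcal{M}}\bigl(\pi,\begin{bf}i\end{bf}_{\mathcal{N}^g}^{\mathcal{M}}(\sigma^g)\bigr)\neq0$ for some such $g\in S_n$; the other adjunction in Proposition~\ref{wnogivbweiogbiowebghtnwehgfoibweoigbvoibihnoi}.iv together with cuspidality of $\pi$ then forces $\mathcal{N}^g=\mathcal{M}$, and Schur's lemma (both $\pi$ and $\sigma^g$ are irreducible) gives $\pi\cong\sigma^g$. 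Setting $s:=g$ proves the first assertion.

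For the construction of $\imath$ and the refinement I would argue combinatorially. Let $B_1,\dots,B_u$ and $C_1,\dots,C_u$ be the partitions of $\{1,\dots,n\}$ into consecutive intervals of lengths $m_1,\dots,m_u$ and $n_1,\dots,n_u$ underlying the standard Levis $\mathcal{N}$ and $\mathcal{M}$. For $t\in T$ the intervals $B_j$ and $C_{t(j)}$ have equal length, so there is a unique $\imath(t)\in S_n$ carrying each $B_j$ onto $C_{t(j)}$ in an order-preserving way; it is then immediate that $t\mapsto\imath(t)$ is injective, that conjugation by $\imath(t)$ realises the asserted block-relabeling isomorphism $\mathcal{N}\cong\mathcal{M}$, and in particular that $\mathcal{N}^{\imath(t)}=\mathcal{M}$. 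Now take the $s\in S_n$ from the first assertion. Conjugating the block-diagonal subgroup $\mathcal{N}$ to the block-diagonal subgroup $\mathcal{M}$ forces $s$, viewed as a permutation, to carry the partition $\{C_i\}$ onto the partition $\{B_j\}$; the resulting matching of blocks has equal sizes (here Lemma~\ref{svbjuwebewoigbofgoewibfgoiwehbfg} guarantees $\#\Theta^I_k=\#\Theta^J_k$) and so is realised by some $t\in T$, with $\imath(t)$ chosen to realise exactly the same matching. Then $\nu:=\imath(t)\,s^{-1}$ stabilises every interval $B_j$, hence lies in $\mathcal{N}$; writing $\imath(t)=\nu s$ and using that conjugation by an element of $\mathcal{N}$ does not change the isomorphism class of the $\mathcal{N}$-representation $\sigma$, one obtains $\sigma^{\imath(t)}=(\sigma^{\nu})^{s}\cong\sigma^{s}\cong\pi$. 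Together with $\mathcal{M}=\mathcal{N}^{\imath(t)}$ this is the refined statement.

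I expect the first part to be essentially formal once the Mackey formula and the two Harish-Chandra adjunctions are available; the main obstacle lies in the second part. The point to get right is that the ``within-block'' part of any $s\in S_n$ conjugating $\mathcal{N}$ to $\mathcal{M}$ is a product of permutation matrices lying inside the individual $\operatorname{GL}$-factors, hence is inner and can be absorbed into $\mathcal{N}$; this is precisely what licenses replacing $s$ by the pure block-rearrangement $\imath(t)$. Pinning down the conventions so that $\nu=\imath(t)s^{-1}$ genuinely lands in $\mathcal{N}$ (rather than merely normalising it), and keeping the bookkeeping of the two interval partitions and of membership in $T$ consistent, is the delicate part.
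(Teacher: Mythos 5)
Your argument is correct and follows essentially the same route as the paper: the first assertion via Frobenius reciprocity plus the Harish--Chandra Mackey formula and cuspidality, and the second via factoring $s$ into a pure block rearrangement times a block-preserving permutation that is absorbed as an inner automorphism of a Levi (the paper obtains this factorisation from Borevich--Gavron's description of the normaliser of the Young subgroup $\prod_I S_{n_i}$ and absorbs the block-preserving factor into $\mathcal{M}$ rather than $\mathcal{N}$). The only caveat is the convention point you flag yourself: with the matching $s(C_{t(j)})=B_j$ it is $s^{-1}\imath(t)$ that stabilises the intervals $B_j$ and lies in $\mathcal{N}$, while $\imath(t)s^{-1}$ stabilises the $C_i$ and lies in $\mathcal{M}$ --- either factor can be absorbed up to isomorphism, so the argument goes through.
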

\begin{proof}
The first part follows from Frobenius reciprocity and an application 
of the Harish-Chandra Mackey-Theorem (Proposition
\ref{wnogivbweiogbiowebghtnwehgfoibweoigbvoibihnoi} iii).\\
For the second part, fix some 
$t_0 \in T\bigl((m_j)_{j\in J}, (n_i)_{i\in I}\bigr)$.
It is clear $s$ can then be written as $s't_0$, where $s'\in S_n\subset
\mathcal{G}$ normalises $\mathcal{N}$. It follows that $s'$ normalises 
the Young subgroup $S_{\ast} = \prod_I S_{n_i}$ defined by $\mathcal{M}$.
Borevich and Gavron studied the normaliser of $S_{\ast}$ in $S_n$ in 
\cite{borevich1985arrangement},
and following their exposure we can write $s' = s^1s_0$, where 
$s^1\in S_{\ast}$ and $s_0\in T\bigl((n_i)_{i\in i}, (n_i)_{i\in I}\bigr)$.
It is obvious that 
\begin{equation*}
T\bigl((n_i)_{i\in i}, (n_i)_{i\in I}\bigr)\cdot
T\bigl((m_j)_{j\in J}, (n_i)_{i\in I}\bigr)
\subset T\bigl((m_j)_{j\in J}, (n_i)_{i\in I}\bigr),
\end{equation*}
hence $s = s^1s_0t_0$ can be replaced -- up to an $\mathcal{M}$-isomorphism,
which is provided by $s^1$ in this case -- by the rearrangement of
blocks $s_0t_0 \in 
T\bigl((m_j)_{j\in J}, (n_i)_{i\in I}\bigr)$.

\end{proof}

\subsection{Bernstein-decomposition}
\begin{defn}[Level-$0$ representation]
A representation $V$ is called level-$0$ if all its irreducible subquotients
contain a non-zero vector invariant under the maximal open compact subgroup
$\mathscr{K} = \operatorname{GL}_n(\mathcal{O}) \subset G$. $V$ is
positive level if it has no level-$0$ subquotients.
\end{defn}
It is well-known that we can split the category of $G$-representations
as
\begin{equation*}
\mathfrak{R}(G) = \mathfrak{R}^0(G) \oplus \mathfrak{R}^{+}(G), 
\end{equation*}
where $\mathfrak{R}^0(G)$ is the subcategory of level-$0$ representations
and $\mathfrak{R}^{+}(G)$ is the subcategory of representations of
positive level. In the sequel, we will be concerned with level-$0$
representations alone.
\begin{defn}[(Super-)cuspidal pair]\label{sionfeioboiewhbgoihwroehtf}
Let $M\cong \prod_I G_{n_i}\subset G$ be a Levi-subgroup together with 
an $M$-representation $\pi = \boxtimes_I \pi_i$. The pair $(M, \pi)$ is called 
cuspidal (resp. supercuspidal) if each $\pi_i$ is cuspidal 
(resp. supercuspidal). $(M, \pi)$ is called level-$0$ if each $\pi_i$ is
level-$0$. 
Two pairs $(M_1, \pi_1)$ and $(M_2, \pi_2)$
are said to be $G$-equivalent\footnote{Some authors use the term
\textit{inertially equivalent}.} if there is a $g\in G$ and an 
unramified $M_2$-character $\chi$ such that $M_1 = M_2^g$ and
$\pi_1 = (\chi\otimes \pi_2)^g$. This equivalence relation respects
the notions `cuspidal', `supercuspidal' and `level-$0$'.
The generated equivalence-class
is denoted by $[M,\pi]_G$. The set of all equivalence classes of 
level-$0$ supercuspidal types
is called the (level-$0$ supercuspidal)
Bernstein-spectrum $\mathfrak{B}^{0, \text{sc}}(G)$.
\end{defn}
\begin{defn}[Level-$0$ (super-)type]\label{siovnowebogibwoiefhoiwehoifbowie}
Let $\mathscr{P}$ be a parahoric subgroup of $G$ with reductive
quotient $\mathcal{M}= \mathscr{P}/\mathscr{P}(1) \cong 
\prod_J \mathcal{G}_{m_j}$ and a $\mathscr{P}$-representation 
$\rho$ inflated from $\overline{\rho} = \boxtimes_J \overline{\rho}_j$.
The pair $(\mathscr{P}, \rho)$ is called a level-$0$ type (resp. supertype)
if each $\overline{\rho}_j$ is cuspidal (resp. supercuspidal).
Two such (super-)types $(\mathscr{P}, \rho)$ and $(\mathscr{P}', \rho')$
are said to be $G$-equivalent if $\operatorname{ind}_{\mathscr{P}}^G(\rho)
\cong \operatorname{ind}_{\mathscr{P}'}^G(\rho')$. Let us denote
the set of equivalence classes of supertypes by $\mathfrak{S}^0(G)$.
\end{defn}
An $[M, \pi]_G\in \mathfrak{B}^{0, \text{sc}}(G)$ gives rise to a subcategory
\begin{equation*}
\mathfrak{R}^{[M, \pi]_G}(G) \subset \mathfrak{R}^0(G)
\end{equation*}
where $V$ is an object if and only if we can associate to each subquotient
$Q$ of $V$ an $(N, \sigma)\in [M, \pi]_G$ such that $Q$ is isomorphic
to a subquotient of $\textit{i}_{M\subset P}^G(\sigma)$, where $P$ is some
parabolic subgroup of $G$ containing $M$ as Levi-component.

On the other hand, we can associate to the equivalence class defined
by a supertype $(\mathscr{P}, \rho)$ a subcategory
\begin{equation*}
\mathfrak{R}_{(\mathscr{P}, \rho)}(G) \subset \mathfrak{R}^0(G)
\end{equation*}
where $V$ is an object if and only all subquotients of $V$ are subquotients
of $\operatorname{ind}_{\mathscr{P}}^G(\rho)$.
\begin{thm}
\renewcommand{\labelenumi}{\roman{enumi})}
\begin{enumerate}
\item The level-$0$ part of $\mathfrak{R}(G)$ decomposes as
\begin{equation*}
\mathfrak{R}^0(G) = \bigoplus_{[M, \pi]_G\in \mathfrak{B}^{0, \text{sc}}(G)}
\mathfrak{R}^{[M, \pi]_G}(G) 
= \bigoplus_{(\mathscr{P}, \rho)\in\mathfrak{S}^0(G)}
\mathfrak{R}_{(\mathscr{P}, \rho)}(G);
\end{equation*}
\item There exists a bijection between $\mathfrak{B}^{0, \text{sc}}(G)$ and 
$\mathfrak{S}^0(G)$ such that the 
corresponding subcategories are identical and (up to equivalence) 
the indices $I$ and $J$ from Definitions \ref{sionfeioboiewhbgoihwroehtf}
and \ref{siovnowebogibwoiefhoiwehoifbowie} correspond, i. e.
there is a bijection
$t:I{\rightarrow} J$ such that  $n_i = m_{t(i)}$ and $\pi_i \cong \pi_
{i'}$ if and only if 
$\overline{\rho}_{t(i)}\cong \overline{\rho}_{t(i')}$.
\end{enumerate}
\end{thm}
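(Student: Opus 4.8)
The first equality in (i) is the level-$0$ Bernstein block decomposition over $R$, the modular counterpart of~(\ref{sdnfoibewogiwbebgfoiwferfajsbviqwe89}); we take it as known, so that every level-$0$ irreducible acquires a well-defined supercuspidal support $[M,\pi]_G\in\mathfrak{B}^{0,\text{sc}}(G)$, the subcategories $\mathfrak{R}^{[M,\pi]_G}(G)$ are pairwise orthogonal, and they jointly exhaust $\mathfrak{R}^0(G)$. (If one prefers to prove it here, the route is iterated parahoric restriction down to a minimal Levi carrying a cuspidal constituent; the sole non-formal input is uniqueness of the supercuspidal support, which for $\operatorname{GL}_n$ descends from the corresponding fact over $\operatorname{GL}_n(q)$.) What remains is to exhibit a bijection $\mathfrak{B}^{0,\text{sc}}(G)\longleftrightarrow\mathfrak{S}^0(G)$ under which the two families of subcategories coincide -- this yields the second equality in (i) -- and which matches the combinatorial data as in (ii). The plan is to write the bijection down explicitly via the depth-zero parametrisation of level-$0$ supercuspidals, and then to identify $\mathfrak{R}_{(\mathscr{P},\rho)}(G)$ with $\mathfrak{R}^{[M,\pi]_G}(G)$ by comparing irreducible subquotients on the two sides.

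\emph{Constructing the bijection.} Start from $[M,\pi]_G$ in arranged form, $M\cong\prod_I G_{n_i}$ and $\pi=\boxtimes_I\pi_i$ with each $\pi_i$ level-$0$ supercuspidal on $G_{n_i}$. Up to an unramified twist, $\pi_i$ is compactly induced from $\operatorname{GL}_{n_i}(\mathcal{O})F^{\times}$ of an extension of the inflation of a supercuspidal $\mathcal{G}_{n_i}$-representation $\overline{\rho}_i$; the assignment $\pi_i\mapsto\overline{\rho}_i$ gives a bijection between unramified-twist classes of level-$0$ supercuspidals of $G_{n_i}$ and isomorphism classes of supercuspidal $\mathcal{G}_{n_i}$-representations, with $\pi_i$ in the same class as $\pi_{i'}$ precisely when $\overline{\rho}_i\cong\overline{\rho}_{i'}$. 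Grouping the indices with isomorphic $\overline{\rho}_i$ produces numbers $k, n_k, m_k$ with $\sum n_km_k=n$, and we attach to $[M,\pi]_G$ the supertype $(\mathscr{P},\rho)$, where $\mathscr{P}\subset G_n$ is the standard parahoric with $\mathscr{P}/\mathscr{P}(1)\cong\prod_k\mathcal{G}_{n_k}^{m_k}$ and $\rho$ is inflated from $\boxtimes_k\overline{\rho}_k^{m_k}$. This map is well defined and injective: from $(\mathscr{P},\rho)$ one recovers the partition data $(n_k,m_k)$ by applying \lemref{svbjuwebewoigbofgoewibfgoiwehbfg} to the reductive quotient $\mathscr{P}/\mathscr{P}(1)$, recovers the $\overline{\rho}_k$ from $\rho$, and hence recovers $[M,\pi]_G$ by inverting the parametrisation above; that distinct classes really land in distinct elements of $\mathfrak{S}^0(G)$ -- rather than being identified through the equivalence $\operatorname{ind}_{\mathscr{P}}^G(\rho)\cong\operatorname{ind}_{\mathscr{Q}}^G(\sigma)$ -- is exactly \lemref{ashfiowehgiwehighweihfioewhriweh}. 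Surjectivity is immediate, every supertype being manifestly of this shape.

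\emph{Matching the subcategories.} Fix $[M,\pi]_G\leftrightarrow(\mathscr{P},\rho)$. Since inflation followed by compact induction from a standard parahoric is parahoric induction, $\operatorname{ind}_{\mathscr{P}}^G(\rho)=\textit{i}_{\mathscr{P}}^G(\overline{\rho})=\textit{i}_{\mathscr{K}}^G\big(\begin{bf}i\end{bf}_{\mathcal{M}}^{\mathcal{G}}(\overline{\rho})\big)$ by Proposition~\ref{sofhiowbeviobweoifbwoeihfewhfowehoigf}.ii, with $\begin{bf}i\end{bf}_{\mathcal{M}}^{\mathcal{G}}$ the Harish-Chandra induction. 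We claim its irreducible subquotients are exactly the level-$0$ irreducibles of supercuspidal support $[M,\pi]_G$. For the inclusion that the block is contained in this list, one unwinds $\operatorname{ind}_{\mathscr{P}}^G(\rho)$ by induction in stages into a $p$-adic parabolic induction of an outer product of the representations $\operatorname{ind}_{\operatorname{GL}_{n_k}(\mathcal{O})}^{G_{n_k}}(\overline{\rho}_k)$ (each taken with multiplicity $m_k$), whose irreducible subquotients -- $\overline{\rho}_k$ being supercuspidal -- run through exactly the unramified-twist class of $\pi_k$; exactness of the functors in play and the formal behaviour of $\boxtimes$ (Proposition~\ref{snvoiwboivgbeifbgeiwbgvbdwoigvboweobiiojsdbvajb}) then force every irreducible of the block to occur. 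For the reverse inclusion -- no constituent from a foreign block -- one restricts along the parahorics: \thmref{sdhfohwigobhbweorwgbofegbwr} gives $\textit{r}_{\mathscr{K}}^G\,\textit{i}_{\mathscr{P}}^G(\overline{\rho})\cong\bigoplus_{\text{finite}}\begin{bf}i\end{bf}_{\mathcal{M}}^{\mathcal{G}}(\overline{\rho})$, and exactness of parahoric restriction together with the adjunction $\operatorname{Hom}_G(\textit{i}_{\mathcal{M}}^G(-),V)\cong\operatorname{Hom}_{\mathcal{M}}(-,\textit{r}_{\mathcal{M}}^G(V))$ (Proposition~\ref{sofhiowbeviobweoifbwoeihfewhfowehoigf}.v) pin any candidate constituent $V$ to a cuspidal datum which, by \lemref{askdhfwbvoiweboifwefhiowegbfiwoe}, is $\mathcal{G}_n$-conjugate to $\overline{\rho}$; \lemref{ashfiowehgiwehighweihfioewhriweh} then places $V$ in the block of $[M,\pi]_G$. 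Hence $\mathfrak{R}_{(\mathscr{P},\rho)}(G)=\mathfrak{R}^{[M,\pi]_G}(G)$; the second equality in (i) is the first one transported along the bijection, and the index assertion in (ii) is the block pattern built into the construction of the bijection.

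\emph{The main obstacle.} Everything above that is not bookkeeping is the passage between $\operatorname{GL}_n(q)$ and $\operatorname{GL}_n(F)$ in positive characteristic: over $R$ the supercuspidal support need not a priori be unique, and a level-$0$ \emph{cuspidal} representation of $G_n$ may fail to be supercuspidal, so neither the list of irreducible constituents of $\begin{bf}i\end{bf}_{\mathcal{M}}^{\mathcal{G}}(\overline{\rho})$ nor its $p$-adic counterpart can simply be imported from the complex case. These are precisely the points routed through \thmref{sdhfohwigobhbweorwgbofegbwr} and \lemref{ashfiowehgiwehighweihfioewhriweh}--\lemref{askdhfwbvoiweboifwefhiowegbfiwoe}, whose own proofs rest on the Dipper--James--Green description of the modular representations of $\operatorname{GL}_n(q)$; the rest of the argument is formal, and the one genuinely combinatorial nuisance -- reconciling the index set $I$ of $\operatorname{GL}$-factors of $M$ with the index set $J$ of $\operatorname{GL}$-factors of $\mathscr{P}/\mathscr{P}(1)$ -- is exactly what \lemref{svbjuwebewoigbofgoewibfgoiwehbfg} takes care of.
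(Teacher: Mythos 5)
Your outline is correct in substance, but be aware that the paper's own ``proof'' of this theorem is a single sentence deferring everything to Chapter IV of Vign\'eras's \emph{Induced $R$-representations of $p$-adic reductive groups}; the level-$0$ modular Bernstein decomposition, the classification of level-$0$ supercuspidals by compact induction from $F^{\times}\operatorname{GL}_{n_i}(\mathcal{O})$, and the matching of blocks with supertypes are all established there. What you have written is essentially a reconstruction of that argument, with the useful feature that you slot the paper's own auxiliary results into the right places: \lemref{svbjuwebewoigbofgoewibfgoiwehbfg} for recovering the partition data from the reductive quotient, and \thmref{sdhfohwigobhbweorwgbofegbwr} together with \lemref{ashfiowehgiwehighweihfioewhriweh} and \lemref{askdhfwbvoiweboifwefhiowegbfiwoe} for the parahoric Mackey analysis that pins down constituents and proves injectivity of the bijection. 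Two caveats. The inputs you take as known --- uniqueness of supercuspidal support over $R$ and the bijection between unramified-twist classes of level-$0$ supercuspidals of $G_{n_i}$ and supercuspidal $\mathcal{G}_{n_i}$-representations --- are exactly the non-formal content of the cited reference, so your argument is not independent of it (which is consistent with the paper's own standard of proof here). More seriously, the step where you ``unwind $\operatorname{ind}_{\mathscr{P}}^G(\rho)$ by induction in stages into a parabolic induction of $\boxtimes_k\operatorname{ind}_{\mathscr{K}_{n_k}}^{G_{n_k}}(\rho_k)$'' is not literal transitivity of induction: compact induction from a parahoric subgroup does not factor through a parabolic subgroup of $G$, and comparing the irreducible subquotients of $\operatorname{ind}_{\mathscr{P}}^G(\rho)$ with those of $\textit{i}_{M\subset P}^G\bigl(\operatorname{ind}_{\mathscr{P}\cap M}^{M}(\rho|_{\mathscr{P}\cap M})\bigr)$ requires the cover/positive-element machinery (or second adjointness). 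That is the one place where your sketch silently uses a theorem rather than a formality; it is available in the same reference, but it should be named.
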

\begin{proof}
Everything is extractable from Chapter IV of \cite{vignéras1998induced}.
\end{proof}
Depending on the structure of the block (or, equivalently, the type) we
distinguish three different sorts of blocks:
\begin{enumerate}
\item A \textit{supercuspidal} block is a block generated by 
a supercuspidal pair of the form $(G, \pi)$. The associated type is
of the form $(\mathscr{K}, \rho)$ with $\rho$ inflated from a supercuspidal
$\mathcal{G}$-representation. All simple objects in this block are unramified
twists of $\pi$.
\item A \textit{simple} block is a block generated by a supercuspidal
pair $(M, \pi)$ with $M= (G_{a})^b$ (with $ab = n$) and
$\pi = \boxtimes_{b\text{ copies }}\pi_0$. The associated type is of
the form $(\mathscr{P}, \rho)$ with $\mathscr{P}/\mathscr{P}(1)
\cong (\mathcal{G}_a)^b$ and $\rho$ inflated from 
$\overline{\rho} = \boxtimes_{b\text{ copies }}\overline{\rho}_0$.
In this case, we also call the supercuspidal pair simple. The
block generated by the pair $(T, R_{\text{triv}})$ with $T\subset G$ the 
standard torus (or, alternatively,
by the type $(\mathscr{I}, R_{\text{triv}})$ with $\mathscr{I}$ the 
Iwahori subgroup) is called the \textit{unipotent} block of $G$. It
contains the trivial $G$-representation.
\item An arbitrary block is called \textit{semisimple}. From now on, 
we always assume
that the associated type is given in the arranged form
described at the beginning of Section \ref{siodhfioebhgoiweigbewboibrfowe}.

\end{enumerate}

\subsection{Surjectivity of Harish-Chandra induction}\label{eu09w7r34r74zutt6569876iujdsgf}
Within Brauer-theory, one constructs a decomposition of
the representation category $\mathfrak{R}(\mathcal{G}_n)$ of 
the finite group $\mathcal{G}_n$, which can be seen as the finite
analogue of Bernstein's decomposition. We collect some implications of this fact
from the literature. We retain the notation from the
preceding sections, in particular from Section
\ref{siodhfioebhgoiweigbewboibrfowe}. Moreover, we define
the subgroup
\begin{equation*}
\mathcal{M}_{\tt{max}} = \prod_{i = 1, \ldots, k} \mathcal{G}_{n_im_i}
\end{equation*}
of $\mathcal{G}$ which contains $\mathcal{M}$ as a Levi-subgroup.
\begin{prop} \label{saifiownfiowbgio}
Let $V$ be an indecomposable 
representation of $\mathcal{M}_{\tt{max}}$ such that
$\operatorname{Hom}_{\mathcal{M}_{\tt{max}}}(V, Q)\neq 0$ for
some subquotient $Q$ of $\bf{i}_{\mathcal{M}}^{\mathcal{M}_{\tt{max}}}(
\overline{\rho})$. Then all irreducible subquotients of 
$V$ are isomorphic to irreducible subquotients of 
$\bf{i}_{\mathcal{M}}^{\mathcal{M}_{\tt{max}}}(
\overline{\rho})$.
\end{prop}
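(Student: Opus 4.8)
The plan is to reduce the whole statement to the \emph{finite Bernstein decomposition} referred to just above, i.e. to the partition of $\operatorname{Irr}_R(\mathcal{M}_{\tt{max}})$ according to supercuspidal support and the resulting direct-sum decomposition of $\mathfrak{R}(\mathcal{M}_{\tt{max}})$. Concretely, I would first recall from the literature that every irreducible $\mathcal{M}_{\tt{max}}$-representation $X$ occurs as a subquotient of $\mathbf{i}_{\mathcal{L}}^{\mathcal{M}_{\tt{max}}}(\overline{\tau})$ for a supercuspidal pair $(\mathcal{L},\overline{\tau})$ that is unique up to $\mathcal{M}_{\tt{max}}$-conjugacy — its supercuspidal support $\operatorname{scusp}(X)$ — and that, for each such class $\mathfrak{s}$, the full subcategory $\mathfrak{R}^{\mathfrak{s}}$ of those representations all of whose composition factors have supercuspidal support $\mathfrak{s}$ is a direct summand of $\mathfrak{R}(\mathcal{M}_{\tt{max}})$; in particular $\mathfrak{R}^{\mathfrak{s}}$ is closed under subobjects and quotients, and any indecomposable representation lies in exactly one $\mathfrak{R}^{\mathfrak{s}}$. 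For the product group $\mathcal{M}_{\tt{max}}=\prod_i \mathcal{G}_{n_im_i}$ this is the ``product'' of the decompositions of the factors, using $\operatorname{Irr}_R(\prod_i\mathcal{G}_{n_im_i})=\prod_i\operatorname{Irr}_R(\mathcal{G}_{n_im_i})$ together with the compatibility of $\mathbf{i}$ with direct products, as in Proposition \ref{wnogivbweiogbiowebghtnwehgfoibweoigbvoibihnoi} and Proposition \ref{snvoiwboivgbeifbgeiwbgvbdwoigvboweobiiojsdbvajb}.

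Next I would show that $\mathbf{i}_{\mathcal{M}}^{\mathcal{M}_{\tt{max}}}(\overline{\rho})$ lives in a single such summand. This is the only place the hypothesis bites: it uses that $\overline{\rho}=\boxtimes_{i=1}^k(\overline{\rho}_i)^{m_i}$ is genuinely \emph{supercuspidal}, not merely cuspidal, as a representation of the Levi $\mathcal{M}\subset\mathcal{M}_{\tt{max}}$ — an irreducible representation of a product of general linear groups is supercuspidal precisely when each of its outer tensor factors is, by compatibility of parabolic restriction with direct products. Hence $(\mathcal{M},\overline{\rho})$ is a supercuspidal pair, so by the very definition of supercuspidal support every irreducible subquotient of $\mathbf{i}_{\mathcal{M}}^{\mathcal{M}_{\tt{max}}}(\overline{\rho})$ has support $\mathfrak{s}_0\coloneqq[\mathcal{M},\overline{\rho}]$; that is, $\mathbf{i}_{\mathcal{M}}^{\mathcal{M}_{\tt{max}}}(\overline{\rho})\in\mathfrak{R}^{\mathfrak{s}_0}$, and then so is every subquotient $Q$ of it, the summand being closed under subquotients.

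The finishing step is short. A nonzero map $V\to Q$ has nonzero image, a subquotient of both $V$ and $Q$, so $V$ has an irreducible subquotient $S$ which is also a subquotient of $Q$; since $Q\in\mathfrak{R}^{\mathfrak{s}_0}$ we get $\operatorname{scusp}(S)=\mathfrak{s}_0$. Thus $V$ has a composition factor in $\mathfrak{R}^{\mathfrak{s}_0}$, and as $V$ is indecomposable it lies entirely in $\mathfrak{R}^{\mathfrak{s}_0}$. Consequently every irreducible subquotient of $V$ has supercuspidal support $[\mathcal{M},\overline{\rho}]$, i.e. is isomorphic to an irreducible subquotient of $\mathbf{i}_{\mathcal{M}}^{\mathcal{M}_{\tt{max}}}(\overline{\rho})$, which is exactly the assertion.

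The bookkeeping and the ``$\operatorname{Hom}\neq 0$ gives a common composition factor'' step are routine, and all the genuine content is imported; so the main obstacle is purely citational — pinning down the finite Bernstein decomposition in the form actually needed, namely a genuine direct-sum decomposition of the \emph{module category} $\mathfrak{R}(\mathcal{M}_{\tt{max}})$ indexed by supercuspidal supports (with well-definedness of supercuspidal support for finite general linear groups and their products), rather than merely the weaker partition of $\operatorname{Irr}_R$. The single spot where one could easily go wrong is the distinction between supercuspidal and cuspidal for $\overline{\rho}$: if $\overline{\rho}$ were only cuspidal, $\mathbf{i}_{\mathcal{M}}^{\mathcal{M}_{\tt{max}}}(\overline{\rho})$ could meet several summands $\mathfrak{R}^{\mathfrak{s}}$ and the conclusion would fail.
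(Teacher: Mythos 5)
Your argument is correct and is essentially the paper's own proof: the paper likewise invokes the decomposition of $\mathfrak{R}(\mathcal{M}_{\tt{max}})$ into summands determined by supercuspidal support (citing Section 2.4 of Brundan--Dipper--Kleshchev), writes $V = V_0 \oplus V_1$ accordingly, and uses $\operatorname{Hom}_{\mathcal{M}_{\tt{max}}}(V,Q)\neq 0$ together with indecomposability to force $V_1 = 0$. Your closing observation about where supercuspidality (as opposed to mere cuspidality) of $\overline{\rho}$ is genuinely needed is accurate and worth keeping in mind.
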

\begin{proof}
At first, let us remark that the claim does not depend on the nature
of $\mathcal{M}_{\tt{max}}$, we could take $\mathcal{G}_n$ instead.\\
It follows from Section 2.4 in \cite{BDC} that $V$ decomposes
as $V = V_0 \oplus V_1$ 
such that each irreducible subquotient of $V_0$ is isomorphic to an irreducible
subquotient of
$\bf{i}_{\mathcal{M}}^{\mathcal{M}_{\tt{max}}}(
\overline{\rho})$ and such that 
this does not happen for any irreducible subquotient
of $V_1$.
As $\operatorname{Hom}_{\mathcal{M}_{\tt{max}}}(V, Q)$ is not empty, 
$V_0$ is not trivial. It follows that $V_1 = 0$.
\end{proof}
The next observation depends crucially on the structure
of $\mathcal{M}_{\tt{max}}$:
\begin{prop}\label{siahfiohwigohewrhigoehwrgtfio}
The Harish-Chandra induction $\bf{i}_{\mathcal{M}_{\tt{max}}}^{\mathcal{G}}$
gives an equivalence of categories between $\mathfrak{R}^{[\mathcal{M},
\overline{\rho}]_{\mathcal{M}_{\tt{max}}}}(\mathcal{M}_{\tt{max}})$ and 
$\mathfrak{R}^{[\mathcal{M},
\overline{\rho}]_{\mathcal{G}}}(\mathcal{G})$. In particular, any irreducible
subquotient of $\bf{i}_{\mathcal{M}}^{\mathcal{G}}(\overline{\rho})$
is of the form $\bf{i}_{\mathcal{M}_{\tt{max}}}^{\mathcal{G}}(\textnormal{X})$ 
for some
irreducible subquotient $X$ of 
$\bf{i}_{\mathcal{M}}^{\mathcal{M}_{\tt{max}}}(\overline{\rho})$.
\end{prop}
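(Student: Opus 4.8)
Write $\mathcal{N}:=\mathcal{M}_{\tt{max}}$, $\mathbf{i}:=\mathbf{i}_{\mathcal{N}}^{\mathcal{G}}$, and let $\mathbf{r}:=\mathbf{r}_{\mathcal{N}}^{\mathcal{G}}$ be Harish--Chandra restriction. Both functors are exact (finite parabolic induction resp.\ restriction is built from an always-exact induction/restriction, an always-exact inflation, and --- for restriction --- the coinvariants functor for the unipotent radical $\mathcal{V}$, which is exact because $\ell\nmid\#\mathcal{V}$), and they are biadjoint by Proposition \ref{wnogivbweiogbiowebghtnwehgfoibweoigbvoibihnoi}.iv. The plan is to show that $\mathbf{i}$ and $\mathbf{r}$ restrict to mutually quasi-inverse equivalences between $\mathfrak{R}^{[\mathcal{M},\overline{\rho}]_{\mathcal{N}}}(\mathcal{N})$ and $\mathfrak{R}^{[\mathcal{M},\overline{\rho}]_{\mathcal{G}}}(\mathcal{G})$; the whole point is that $\mathcal{N}$ already absorbs the entire normaliser of the pair $(\mathcal{M},\overline{\rho})$.

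First I would check that $\mathbf{i}$ and $\mathbf{r}$ carry these two blocks into one another: a composition factor $Q$ of $\mathbf{i}(V)$ (resp.\ $\mathbf{r}(V)$) is a subquotient of $\mathbf{i}(S)$ (resp.\ $\mathbf{r}(S)$) for a composition factor $S$ of $V$, and composing with parabolic restriction down to $\mathcal{M}$ and transitivity of Harish--Chandra functors (Proposition \ref{wnogivbweiogbiowebghtnwehgfoibweoigbvoibihnoi}.i) identifies the supercuspidal support of $Q$ with that of $S$; for the $\mathbf{r}$-direction one also uses that, $\overline{\rho}$ being supercuspidal, $\mathbf{r}_{\mathcal{M}}^{\mathcal{G}}\mathbf{i}_{\mathcal{M}}^{\mathcal{G}}(\overline{\rho})\cong\bigoplus_{w}\overline{\rho}^{\,w}$ with $w$ ranging over $N_{\mathcal{G}}(\mathcal{M})/\mathcal{M}$ and every summand isomorphic to $\overline{\rho}$ (Proposition \ref{wnogivbweiogbiowebghtnwehgfoibweoigbvoibihnoi}.iii, all Mackey summands reducing to conjugations). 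The conceptual heart is then the observation that $N_{\mathcal{G}}(\mathcal{M},\overline{\rho})\subseteq\mathcal{N}$: by Lemma \ref{svbjuwebewoigbofgoewibfgoiwehbfg} (or a direct inspection of normalisers of Young-type subgroups as in Lemma \ref{askdhfwbvoiweboifwefhiowegbfiwoe}), both $N_{\mathcal{G}}(\mathcal{M})/\mathcal{M}$ and $N_{\mathcal{N}}(\mathcal{M})/\mathcal{M}$ are canonically $\prod_i S_{m_i}$, and the subgroup fixing the isomorphism class of $\overline{\rho}$ is, on both sides, all of it --- this is exactly where $\mathcal{N}=\prod_i\mathcal{G}_{n_im_i}$, $\mathcal{M}=\prod_i(\mathcal{G}_{n_i})^{m_i}$ and the hypothesis $\overline{\rho}_i\not\cong\overline{\rho}_j$ are used. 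Feeding this into the Harish--Chandra Mackey formula for $\mathbf{r}\mathbf{i}$ (Proposition \ref{wnogivbweiogbiowebghtnwehgfoibweoigbvoibihnoi}.iii): the trivial double coset contributes the identity functor, while each remaining summand, evaluated on an object of the $\mathcal{N}$-block, produces only representations whose supercuspidal support is an $\operatorname{Int}(g)$-translate of $(\mathcal{M},\overline{\rho})$ with $g\notin\mathcal{N}$, hence --- since $N_{\mathcal{G}}(\mathcal{M},\overline{\rho})\subseteq\mathcal{N}$ --- a support outside the $\mathcal{N}$-block; as $\mathbf{r}\mathbf{i}$ does preserve that block, these summands must vanish on it. Therefore the unit $\operatorname{id}\to\mathbf{r}\mathbf{i}$ is an isomorphism on $\mathfrak{R}^{[\mathcal{M},\overline{\rho}]_{\mathcal{N}}}(\mathcal{N})$, so $\mathbf{i}$ is fully faithful there.

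It remains to upgrade ``fully faithful'' to ``equivalence''. First, $\mathbf{r}$ is \emph{faithful} on the $\mathcal{G}$-block: by the finite Bernstein decomposition (\cite{BDC}) every simple $S$ of that block has supercuspidal support $(\mathcal{M},\overline{\rho})$, so $\mathbf{r}_{\mathcal{M}}^{\mathcal{G}}(S)\neq 0$ and a fortiori $\mathbf{r}(S)=\mathbf{r}_{\mathcal{N}}^{\mathcal{G}}(S)\neq 0$, and exactness propagates $\mathbf{r}(V)\neq 0$ to every nonzero $V$ in the block. A short diagram chase then shows that an exact, fully faithful functor with exact, faithful right adjoint carries simples to simples: if $0\neq Y\subsetneq\mathbf{i}(X)$ with $X$ simple, applying $\mathbf{r}$ gives a monomorphism $\mathbf{r}(Y)\hookrightarrow\mathbf{r}\mathbf{i}(X)\cong X$, so $\mathbf{r}(Y)=0$ (excluded by faithfulness) or $\mathbf{r}(Y)\cong X$, and in the latter case $\mathbf{r}(\mathbf{i}(X)/Y)=0$ forces $\mathbf{i}(X)=Y$, a contradiction. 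Since every simple of the $\mathcal{G}$-block is, by transitivity, a composition factor of $\mathbf{i}_{\mathcal{M}}^{\mathcal{G}}(\overline{\rho})=\mathbf{i}(\mathbf{i}_{\mathcal{M}}^{\mathcal{N}}(\overline{\rho}))$, hence of $\mathbf{i}(X)$ for some simple subquotient $X$ of $\mathbf{i}_{\mathcal{M}}^{\mathcal{N}}(\overline{\rho})$, and such $\mathbf{i}(X)$ is simple, the functor $\mathbf{i}$ induces a bijection between the finitely many simples of the two blocks. Using that $\mathbf{i}$ preserves projectives (inflation along the prime-to-$\ell$ unipotent radical, and induction of finite groups, both do), full faithfulness upgrades this to $\mathbf{i}(P_X)\cong P_{\mathbf{i}(X)}$ on indecomposable projective covers, so the essential image of $\mathbf{i}$ contains all projectives of the $\mathcal{G}$-block; as every object there is a cokernel of a map between projectives, exactness and fullness give essential surjectivity. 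Hence $\mathbf{i}_{\mathcal{M}_{\tt{max}}}^{\mathcal{G}}$ is an equivalence. The final clause then follows at once: an irreducible subquotient of $\mathbf{i}_{\mathcal{M}}^{\mathcal{G}}(\overline{\rho})=\mathbf{i}(\mathbf{i}_{\mathcal{M}}^{\mathcal{N}}(\overline{\rho}))$ is, by exactness of $\mathbf{i}$, a subquotient of $\mathbf{i}(X)$ for an irreducible subquotient $X$ of $\mathbf{i}_{\mathcal{M}}^{\mathcal{N}}(\overline{\rho})$, and such an $\mathbf{i}(X)$ has just been shown to be irreducible.

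I expect the Mackey-formula step --- verifying $N_{\mathcal{G}}(\mathcal{M},\overline{\rho})\subseteq\mathcal{M}_{\tt{max}}$ and that this annihilates every off-diagonal term on the block --- to be the main obstacle; it is precisely there that $\overline{\rho}_i\not\cong\overline{\rho}_j$ and the block-diagonal embedding $\mathcal{M}\subset\mathcal{M}_{\tt{max}}$ do the work, and the statement would fail for a smaller $\mathcal{M}_{\tt{max}}$. The only genuinely external input is \cite{BDC}, used to know that these Harish--Chandra series behave like honest Bernstein blocks (faithfulness of $\mathbf{r}$ on them, rigidity of supercuspidal support); everything else is formal nonsense about biadjoint exact functors.
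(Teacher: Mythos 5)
Your proof takes a genuinely different route from the paper's: the paper disposes of this proposition in one line by citing Lemma 2.4d / Theorem 2.4e of \cite{BDC}, whereas you reconstruct that external input from scratch by the standard Dipper--James-style argument (Mackey formula for $\mathbf{r}_{\mathcal{M}_{\tt{max}}}^{\mathcal{G}}\circ\mathbf{i}_{\mathcal{M}_{\tt{max}}}^{\mathcal{G}}$, the inclusion $N_{\mathcal{G}}(\mathcal{M},\overline{\rho})\subseteq\mathcal{M}_{\tt{max}}$, and formal manipulations with biadjoint exact functors). The skeleton is sound and correctly isolates where the hypothesis $\overline{\rho}_i\not\cong\overline{\rho}_j$ enters. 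What a self-contained proof buys is transparency about which finite-group inputs are really needed; what the citation buys is that \cite{BDC} has already carried out the supercuspidal-support bookkeeping that your sketch compresses into a few clauses.

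One step is wrong as written, namely the faithfulness of $\mathbf{r}_{\mathcal{M}_{\tt{max}}}^{\mathcal{G}}$ on the block. You infer ``$\mathbf{r}_{\mathcal{M}}^{\mathcal{G}}(S)\neq 0$'' from ``$S$ has supercuspidal support $(\mathcal{M},\overline{\rho})$''. That implication is false in the modular setting: supercuspidal support only makes $S$ a \emph{subquotient} of $\mathbf{i}_{\mathcal{M}}^{\mathcal{G}}(\overline{\rho})$, whereas non-vanishing of $\mathbf{r}_{\mathcal{M}}^{\mathcal{G}}(S)$ is governed by the \emph{cuspidal} support of $S$, whose Levi can be strictly larger than $\mathcal{M}$. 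Cuspidal non-supercuspidal representations are exactly the failure of this implication: for $\ell\mid q+1$ the reduced Steinberg of $\mathcal{G}_2$ has a cuspidal factor with supercuspidal support the torus but vanishing $\mathbf{r}_T$. The conclusion you actually need, $\mathbf{r}_{\mathcal{M}_{\tt{max}}}^{\mathcal{G}}(S)\neq 0$, is still true, but to see it you must argue with the cuspidal support $(\mathcal{L},\sigma)$ of $S$: by the disjointness of Proposition \ref{aishfiowebhgiwbeiogfbiowebfrbweorwebroiweb} each cuspidal factor of $\sigma$ is attached to a single $\overline{\rho}_i$, so $\mathcal{L}$ is conjugate into $\mathcal{M}_{\tt{max}}$, and then transitivity of parabolic restriction together with the Strong Conjugacy Theorem \ref{asdfnjwbgfbgborgfouwrbogfb} gives the non-vanishing. (Note this uses $k\geq 2$; for $k=1$ the proposition is vacuous since $\mathcal{M}_{\tt{max}}=\mathcal{G}$.) A smaller inaccuracy: $N_{\mathcal{G}}(\mathcal{M})/\mathcal{M}$ is not $\prod_i S_{m_i}$ when $n_i=n_j$ for some $i\neq j$; it is the stabiliser of the isomorphism class of $\overline{\rho}$ inside it that equals $\prod_i S_{m_i}$, which is what your argument actually uses.
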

\begin{proof}
This is a translation of Lemma 2.4d / Thm. 2.4e in \cite{BDC} into our language.
\end{proof}

\section{Construction of the supercover}\label{sivhnoiwbhi0r3u84ru82734985932}
As described in \cite{VigLivre}, Thm. II.2.4, the irreducible subquotients 
of $\begin{bf}i\end{bf}_{(\mathcal{G}_{n_i})^{m_i}}^{\mathcal{G}_{n_im_i}}
\bigl((\overline{\rho}_i)^{m_i}\bigr)$ may be indexed by partitions
$\mu_i$ of $m_i$ (but it is not clear that different partitions 
give rise to non-isomorphic subquotients).
We denote these subquotients by $P_{i, \mu_i}$, where
$\mu_i$ runs through some subset $\Xi_i$ of the set of partitions
of $m_i$ such that $\mu_i \neq \mu_i'$ implies
$P_{i, \mu_i}\not\cong P_{i, \mu_i'}$.
Define
\begin{equation*}
\Xi = \prod_{i=1, \ldots, k} \Xi_i \quad \text{and}
\quad \Psi = \left\{\boxtimes_{i=1}^{k}
 P_{i, \mu_i}\right\}_{(\mu_1, \ldots, \mu_k)\in \Xi}.
\end{equation*}
Then $\Psi$ is a set of representatives (with respect to the equivalence
relation `isomorphism') for the irreducible subquotients
of $\begin{bf}i\end{bf}_{\mathcal{M}}^{
\mathcal{M}_{\tt{max}}}(\overline{\rho})$.
\begin{prop}\label{shdbgbewrgbwrbigbwr}
Let $X = \boxtimes_{i=1}^{k}
 P_{i, \mu_i}$ be an element of $\Psi$. 
Then each $P_{i, \mu_i}$ admits a projective
cover $\hat{P}_{i, \mu_i}$ and
$\hat{X} = \boxtimes_{i=1}^{k}
 \hat{P}_{i, \mu_i}$ has all its subquotients
in $\Psi$. We denote 
$\hat{\Psi} = \{\hat{X}|X\in \Psi\}$.
\end{prop}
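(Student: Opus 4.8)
The plan is to handle the two assertions separately. The existence of $\hat{P}_{i,\mu_i}$ is immediate: $P_{i,\mu_i}$ is an irreducible, hence finite-dimensional, module over the finite-dimensional $R$-algebra $R[\mathcal{G}_{n_im_i}]$, and every finitely generated module over an Artinian ring admits a projective cover. I would record right away that, since $P_{i,\mu_i}$ is simple, the cover $\hat{P}_{i,\mu_i}\twoheadrightarrow P_{i,\mu_i}$ has simple top and is therefore indecomposable; this is the only property of $\hat{P}_{i,\mu_i}$ I will use below. For the statement about $\hat{X}$, I read ``all its subquotients in $\Psi$'' as ``all its irreducible subquotients (composition factors) are isomorphic to elements of $\Psi$'', which is the only sensible reading, since $\Psi$ is a set of irreducibles.

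First I would peel off the tensor factors. Each $\hat{P}_{i,\mu_i}$ is finite-dimensional over $R$, hence of finite length, so Proposition~\ref{snvoiwboivgbeifbgeiwbgvbdwoigvboweobiiojsdbvajb}.iv, in its version for a finite collection of groups (here $\mathcal{G}_{n_1m_1},\dots,\mathcal{G}_{n_km_k}$, with product $\mathcal{M}_{\tt{max}}$), shows that $\hat{X}=\boxtimes_{i=1}^k\hat{P}_{i,\mu_i}$ has finite length and that each of its composition factors is isomorphic to $\boxtimes_{i=1}^k Q_i$, where $Q_i$ is a composition factor of $\hat{P}_{i,\mu_i}$. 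It therefore suffices to prove, for each $i$, that every composition factor of $\hat{P}_{i,\mu_i}$ is isomorphic to some $P_{i,\nu_i}$ with $\nu_i\in\Xi_i$: granting this, $\boxtimes_{i=1}^k Q_i\cong\boxtimes_{i=1}^k P_{i,\nu_i}$ with $(\nu_1,\dots,\nu_k)\in\Xi$, and this is an element of $\Psi$ by definition.

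The single-factor statement is exactly what Proposition~\ref{saifiownfiowbgio} gives. As noted in its proof, that proposition does not depend on the ambient group, so I may apply it with $\mathcal{G}_{n_im_i}$, $(\mathcal{G}_{n_i})^{m_i}$ and $(\overline{\rho}_i)^{m_i}$ playing the roles of $\mathcal{M}_{\tt{max}}$, $\mathcal{M}$ and $\overline{\rho}$. Taking $V=\hat{P}_{i,\mu_i}$ --- indecomposable by the first paragraph --- the cover map yields $\operatorname{Hom}_{\mathcal{G}_{n_im_i}}(\hat{P}_{i,\mu_i},P_{i,\mu_i})\neq 0$, and $P_{i,\mu_i}$ is an irreducible subquotient of $\begin{bf}i\end{bf}_{(\mathcal{G}_{n_i})^{m_i}}^{\mathcal{G}_{n_im_i}}\bigl((\overline{\rho}_i)^{m_i}\bigr)$ by the very construction of $\Xi_i$. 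Hence all composition factors of $\hat{P}_{i,\mu_i}$ are isomorphic to composition factors of that induced representation, i.e. to the $P_{i,\nu_i}$ with $\nu_i\in\Xi_i$. Combined with the reduction of the previous paragraph, this proves the claim.

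The only non-formal ingredient is Proposition~\ref{saifiownfiowbgio}, which rests on the finite analogue of the Bernstein decomposition (Brauer theory of $\mathcal{G}_n$, via the references cited there); everything else is bookkeeping with outer tensor products. Accordingly I do not expect a serious obstacle, only a few things to get right: that ``subquotient'' must be read as ``composition factor'', that $\hat{P}_{i,\mu_i}$ is genuinely indecomposable so that Proposition~\ref{saifiownfiowbgio} is applicable, and that Proposition~\ref{snvoiwboivgbeifbgeiwbgvbdwoigvboweobiiojsdbvajb} is being invoked in its ``finite collection of groups'' form rather than merely for two factors.
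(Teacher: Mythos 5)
Your proof is correct and rests on the same key ingredient as the paper's own argument: the paper's one-line proof cites \cite{VigLivre}, A.6.b for the existence of the projective covers and Proposition \ref{saifiownfiowbgio} for the claim about subquotients, exactly as you do. The only difference is that you apply Proposition \ref{saifiownfiowbgio} factor by factor and then recombine via Proposition \ref{snvoiwboivgbeifbgeiwbgvbdwoigvboweobiiojsdbvajb}.iv, whereas the paper applies it in one stroke to $\hat{X}$ viewed as an indecomposable $\mathcal{M}_{\tt{max}}$-representation; this is bookkeeping rather than substance, and your version has the mild advantage of not needing to verify that the outer tensor product of the covers is itself indecomposable.
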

\begin{proof}
The first claim follows from 
\cite{VigLivre}, A.6.b, and the second claim follows from our
Proposition \ref{saifiownfiowbgio}.
\end{proof}
Consider the standard parahoric subgroup $\mathscr{P}_{\tt{max}}$
with finite quotient $\mathcal{M}_{\tt{max}}$ and
denote the inflation of
$\hat{X} \in \hat{\Psi}$ to 
$\mathscr{P}_{\tt{max}}$ by $\tilde{X}$
and the inflation of $X\in\Psi$ by $\tilde{X}^{\ast}$.
Similarly, for 
any $i$ and any $\mu_i$, denote the inflation of 
 $\hat{P}_{i, \mu_i}$ to 
$\mathscr{K}_{n_im_i}=
\operatorname{GL}_{n_im_i}(\mathcal{O})$
by $\tilde{P}_{i, \mu_i}$
and the inflation of 
${P}_{i, \mu_i}$ by $\tilde{P}_{i, \mu_i}^{\ast}$.
\begin{lem}\label{jklsdhfsdhgiuoahgf}
$\tilde{X}$ and $\tilde{P}_{i, \mu_i}$
are finitely generated and projective
(in the category of 
$\mathscr{P}_{\tt{max}}$-representations and
$\mathscr{K}_{n_im_i}$-representations, resp.).
\end{lem}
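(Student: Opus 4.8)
The plan is to reduce everything to the corresponding assertions for the finite groups $\mathcal{M}_{\tt{max}}$ and $\mathcal{G}_{n_im_i}$, and then observe that inflation along a pro-$p$ kernel preserves both ``finitely generated'' and ``projective''.

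First I would record what is already available on the finite side. By construction (Proposition \ref{shdbgbewrgbwrbigbwr}), each $\hat{P}_{i,\mu_i}$ is a projective cover of the irreducible $\mathcal{G}_{n_im_i}$-representation $P_{i,\mu_i}$; since $R[\mathcal{G}_{n_im_i}]$ is a finite-dimensional $R$-algebra, $\hat{P}_{i,\mu_i}$ is the associated principal indecomposable module, in particular finite-dimensional over $R$ and projective as an $R[\mathcal{G}_{n_im_i}]$-module. Using the algebra isomorphism $R[\mathcal{M}_{\tt{max}}]\cong \bigotimes_{i} R[\mathcal{G}_{n_im_i}]$ from Proposition \ref{eohiwohgieoh3bi0hr048ihri24tih08rtfhgurfpobu94g} together with part ii) of Proposition \ref{snvoiwboivgbeifbgeiwbgvbdwoigvboweobiiojsdbvajb}, the outer tensor product $\hat{X}=\boxtimes_i \hat{P}_{i,\mu_i}$ is again finitely generated (indeed finite-dimensional over $R$) and projective as an $\mathcal{M}_{\tt{max}}$-representation.

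Next I would dispose of finite generation of the inflations. The representations $\tilde{X}$ and $\tilde{P}_{i,\mu_i}$ have the same underlying $R$-vector spaces as $\hat{X}$ and $\hat{P}_{i,\mu_i}$, which are finite-dimensional over $R$; any $R$-basis is then a finite generating set over $\mathscr{H}(\mathscr{P}_{\tt{max}})$, resp. $\mathscr{H}(\mathscr{K}_{n_im_i})$, so this part is immediate. The only substantive point is projectivity. Here I would argue that, for a standard parahoric subgroup $\mathscr{P}$ with reductive quotient $\mathcal{M}=\mathscr{P}/\mathscr{P}(1)$, the inflation functor $\mathfrak{R}(\mathcal{M})\to\mathfrak{R}(\mathscr{P})$ (taken for the open compact group $\mathscr{P}$ itself, without any induction step) is left adjoint to the functor $V\mapsto V^{\mathscr{P}(1)}$ of taking $\mathscr{P}(1)$-invariants. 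Because $\mathscr{P}(1)$ is pro-$p$, its pro-order is prime to $\ell$, so this invariants functor is exact --- this is precisely the ingredient already used in the proof of Proposition \ref{sofhiowbeviobweoifbwoeihfewhfowehoigf} for the exactness of parahoric restriction (cf. \cite{VigLivre}, I.4.6). Since a left adjoint of an exact functor sends projectives to projectives (\cite{VigLivre}, I.A.1, as invoked there), inflation preserves projectivity. Applying this with $\mathscr{P}=\mathscr{P}_{\tt{max}}$, $\mathcal{M}=\mathcal{M}_{\tt{max}}$ yields that $\tilde{X}$ is projective; applying it with $\mathscr{P}=\mathscr{K}_{n_im_i}$, $\mathcal{M}=\mathcal{G}_{n_im_i}$ (whose pro-$p$-radical is $\mathscr{K}_{n_im_i}(1)$) yields that $\tilde{P}_{i,\mu_i}$ is projective.

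I do not expect a genuine obstacle: the whole argument is a bookkeeping exercise combining the outer-tensor-product properties of Proposition \ref{snvoiwboivgbeifbgeiwbgvbdwoigvboweobiiojsdbvajb} with the inflation/\,$\mathscr{P}(1)$-invariants adjunction. The one point that deserves a line of care is the exactness of $V\mapsto V^{\mathscr{P}(1)}$ in the $\ell$-modular setting, which is why it must be the pro-$p$-radical (and not an arbitrary normal subgroup) that we inflate along; but this is standard and is already relied upon elsewhere in the paper.
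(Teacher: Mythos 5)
Your argument is correct and follows essentially the same route as the paper: establish finite generation and projectivity on the finite-group side (via the projective cover and the outer-tensor-product properties of Proposition \ref{snvoiwboivgbeifbgeiwbgvbdwoigvboweobiiojsdbvajb}.ii), then transport them through inflation using the adjunction with the exact $\mathscr{P}(1)$-invariants functor, which is exactly the mechanism invoked in the proof of Proposition \ref{sofhiowbeviobweoifbwoeihfewhfowehoigf}.i. The only cosmetic difference is that you get finite generation from finite-dimensionality of the principal indecomposable module, whereas the paper cites finite length from \cite{VigLivre}, A.6.b; both are fine.
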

\begin{proof}
By \cite{VigLivre}, A.6.b, each ${P}_{i, \mu_i}$ is of finite length.
As the property `finitely generated' is passed on to extensions,
`finite length' implies `finitely generated'. Now we
simply have to put together
the proof of Proposition \ref{sofhiowbeviobweoifbwoeihfewhfowehoigf}.i
and
Proposition \ref{snvoiwboivgbeifbgeiwbgvbdwoigvboweobiiojsdbvajb}.ii.
\end{proof}
Indeed, both $\tilde{X}$ and $\tilde{P}_{i, \mu_i}$ are easily seen 
to be cyclic but we don't need this. 
Now we are able to define the protagonist of this paper:
\begin{defn}[Supercover]\label{i3h802hrf8032grt892z4392876432t754t7g}
The pair $(\mathscr{P}_{\tt{max}}, \tilde{\rho})$ with
\begin{equation*}
\tilde{\rho} = \bigoplus_{X\in \Psi}
\tilde{X}
\end{equation*}
is called the supercover of $(\mathscr{P}, \rho)$.
The $\mathscr{P}_{\tt{max}}$-representation 
\begin{equation*}
\tilde{\rho}^{\ast} = \bigoplus_{X\in \Psi}
\tilde{X}^{\ast}
\end{equation*}
is a quotient of $\tilde{\rho}$.
\end{defn}
Similarly, for each $i\in I$ we have a supercover 
$(\mathscr{K}_{n_im_i}, \tilde{\rho}_i)$
of $(\mathscr{P}_i, \rho_i)$, where $\mathscr{P}_i$ is the standard
parahoric subgroup of $G_{n_im_i}$ with reductive quotient
$(\mathcal{G}_{n_i})^{m_i}$ and 
$\tilde{\rho}_i$ is defined by summing over the 
$\tilde{P}_{i, \mu_i}$ with $\mu_i$ running through $\Xi_i$.
\begin{cor}\label{aisfhwebofbweobfuiehuigbjkbjjiozjbasudgifiibvuvu97u89hb}
Both $\tilde{\rho}$ and $\tilde{\rho}_i$ are finite length, finitely generated
and projective.
\end{cor}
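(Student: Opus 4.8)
The plan is to deduce Corollary~\ref{aisfhwebofbweobfuiehuigbjkbjjiozjbasudgifiibvuvu97u89hb} directly from the properties already established for the building blocks $\tilde{X}$ (and $\tilde{P}_{i,\mu_i}$) together with the fact that the supercover is a \emph{finite} direct sum of such blocks. Recall $\tilde{\rho} = \bigoplus_{X\in\Psi}\tilde{X}$, and $\Psi$ is indexed by the finite set $\Xi = \prod_{i=1}^k \Xi_i$, where each $\Xi_i$ is a subset of the (finite) set of partitions of $m_i$; hence $\Psi$ is finite and $\tilde{\rho}$ is a finite direct sum. By Lemma~\ref{jklsdhfsdhgiuoahgf}, each $\tilde{X}$ is finitely generated and projective as a $\mathscr{P}_{\tt{max}}$-representation. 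Since finite direct sums of finitely generated modules are finitely generated, and finite direct sums of projectives are projective, $\tilde{\rho}$ is finitely generated and projective. The identical argument with $\mathscr{K}_{n_im_i}$ in place of $\mathscr{P}_{\tt{max}}$ and $\Xi_i$ in place of $\Xi$ gives the same conclusion for $\tilde{\rho}_i$.

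For the finite-length assertion, I would argue as follows. Each $\tilde{X} = \boxtimes_{i=1}^k \hat{P}_{i,\mu_i}$ (inflated to $\mathscr{P}_{\tt{max}}$) has, by Proposition~\ref{shdbgbewrgbwrbigbwr}, all of its subquotients in $\Psi$; in particular the subquotients of $\tilde{X}$ as a $\mathscr{P}_{\tt{max}}$-representation correspond to subquotients of $\hat{X}$ as an $\mathcal{M}_{\tt{max}}$-representation, and $\hat{X}$ is finite-length because each $\hat{P}_{i,\mu_i}$ is (by \cite{VigLivre}, A.6.b, the projective cover of a finite-length module over a finite group algebra is itself finite-length), and Proposition~\ref{snvoiwboivgbeifbgeiwbgvbdwoigvboweobiiojsdbvajb}.iv shows an outer tensor product of finite-length representations is again finite-length. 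Inflation along the pro-$p$ radical $\mathscr{P}_{\tt{max}}(1)$ is exact and induces a bijection on irreducible (sub)quotients, so $\tilde{X}$ is finite-length as a $\mathscr{P}_{\tt{max}}$-representation. A finite direct sum of finite-length modules is finite-length, so $\tilde{\rho}$ is finite-length; again the same reasoning applies verbatim to $\tilde{\rho}_i$.

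Concretely I would organise the write-up as: (i) observe $\Psi$ (resp.\ $\Xi_i$) is finite, so $\tilde{\rho}$ (resp.\ $\tilde{\rho}_i$) is a finite direct sum of the $\tilde{X}$'s (resp.\ $\tilde{P}_{i,\mu_i}$'s); (ii) invoke Lemma~\ref{jklsdhfsdhgiuoahgf} for ``finitely generated and projective'' of each summand; (iii) invoke the finite-length argument above (via \cite{VigLivre}, A.6.b and Proposition~\ref{snvoiwboivgbeifbgeiwbgvbdwoigvboweobiiojsdbvajb}.iv, plus exactness of inflation) for ``finite length'' of each summand; (iv) close by noting all three properties are stable under finite direct sums. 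There is essentially no obstacle here — this is a bookkeeping corollary — but the one point that deserves a sentence of care is the passage between subquotients over the reductive quotient $\mathcal{M}_{\tt{max}}$ and subquotients over the parahoric $\mathscr{P}_{\tt{max}}$ under inflation, which is what legitimately transports ``finite length'' across the inflation step; everything else is immediate from the cited results.
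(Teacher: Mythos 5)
Your proposal is correct and follows essentially the same route as the paper, which likewise deduces the corollary from Lemma~\ref{jklsdhfsdhgiuoahgf} together with the outer-tensor-product properties of Proposition~\ref{snvoiwboivgbeifbgeiwbgvbdwoigvboweobiiojsdbvajb} and the finiteness of $\Psi$. Your write-up is merely more explicit about the finite-length part (finite length of each $\hat{P}_{i,\mu_i}$, stability under $\boxtimes$, and transport across inflation along the pro-$p$ radical), which the paper leaves implicit.
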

\begin{proof}
This follows from Lemma \ref{jklsdhfsdhgiuoahgf}
and Proposition \ref{snvoiwboivgbeifbgeiwbgvbdwoigvboweobiiojsdbvajb}.ii-iii.
\end{proof}


\section{The induced supercover as a progenerator}\label{su0938482HDOFH983r09zhf}
Let $\mathcal{C}$ be some module category. 
\begin{defn}[Progenerator] 
$c\in \operatorname{ob}(\mathcal{C})$ is called a progenerator if
\begin{enumerate}
\item $c$ is projective in $\mathcal{C}$;
\item $c$ is finitely generated;
\item $\operatorname{Hom}_{\mathcal{C}}(c, c')\neq 0$ for any simple 
$c'\in \operatorname{ob}(\mathcal{C})$.
\end{enumerate}
\end{defn}
\begin{thm}
If $c$ is a progenerator, then
\begin{equation*}
\mathcal{C}\;{\cong}\; \operatorname{End}_{\mathcal{C}}(c)\!-\!\textbf{Mod}.
\end{equation*}
\end{thm}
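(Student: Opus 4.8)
The plan is to invoke the classical Morita theory for progenerators, applied to the module category $\mathcal{C}$, which we may assume (after passing to an equivalent category) is of the form $A\text{-}\textbf{Mod}$ for some ring $A$; indeed this is the situation in all intended applications, where $\mathcal{C}$ is a block $\mathfrak{R}_{(\mathscr{P},\rho)}(G)$, which is a direct summand of $\mathscr{H}(G)\text{-}\textbf{Mod}$ and hence itself a module category over a (possibly non-unital) ring. Write $B = \operatorname{End}_{\mathcal{C}}(c)$, acting on $c$ on the left, so that $\operatorname{Hom}_{\mathcal{C}}(c,-)$ naturally takes values in right $B$-modules; adjusting conventions, one gets a functor $F = \operatorname{Hom}_{\mathcal{C}}(c,-)\colon \mathcal{C}\to B\text{-}\textbf{Mod}$ with a left adjoint $G' = c\otimes_B(-)$.

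First I would check that $F$ is exact: this is because $c$ is projective (condition 1 of the progenerator definition), so $\operatorname{Hom}_{\mathcal{C}}(c,-)$ preserves exact sequences. Next I would show $F$ preserves arbitrary direct sums: this follows from $c$ being finitely generated (condition 2), which makes $\operatorname{Hom}_{\mathcal{C}}(c,-)$ commute with coproducts. Together, exactness plus commutation with coproducts means $F$ is determined by its value on $c$ and behaves like the functor $\operatorname{Hom}(c,-)$ of a small projective generator. Then I would use condition 3 — that $\operatorname{Hom}_{\mathcal{C}}(c,c')\neq 0$ for every simple $c'$ — to conclude that $c$ is a \emph{generator}: if $F(X) = 0$ for some nonzero $X$, then $X$ has a nonzero simple subquotient $c'$ (using that objects in these categories are built from simples, e.g. every nonzero object has a nonzero finitely generated, hence a simple, subquotient), and exactness of $F$ forces $F(c')$ to be a subquotient of $F(X) = 0$, contradicting $\operatorname{Hom}(c,c')\neq 0$. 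So $F$ is faithful on objects in the strong sense that it kills no nonzero object.

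Having established that $c$ is a finitely generated projective generator, I would then apply the Morita theorem in its standard form (e.g. as in Bass's \emph{Algebraic $K$-theory} or Lam's \emph{Lectures on Modules and Rings}): the adjoint pair $(G', F) = (c\otimes_B -,\ \operatorname{Hom}_{\mathcal{C}}(c,-))$ is a pair of inverse equivalences between $B\text{-}\textbf{Mod}$ and $\mathcal{C}$. Concretely, the unit $M \to \operatorname{Hom}_{\mathcal{C}}(c,\,c\otimes_B M)$ is an isomorphism for the free module $M = B$ by definition of $B = \operatorname{End}(c)$, hence for all free modules by additivity, hence for all modules by right-exactness of both sides and the five lemma applied to a free presentation of $M$; dually, the counit $c\otimes_B\operatorname{Hom}_{\mathcal{C}}(c,X)\to X$ is an isomorphism for $X = c$ (again by definition), hence for finite direct sums of copies of $c$, hence for all $X$ since every object admits a presentation by (possibly infinite) direct sums of copies of $c$ — this last point is exactly where the generator property is used a second time, to guarantee such presentations exist — and both functors are exact resp. preserve the relevant colimits.

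The main obstacle I expect is the bookkeeping around the phrase ``$\mathcal{C}$ is some module category'': one must either assume $\mathcal{C} = A\text{-}\textbf{Mod}$ for an honest (unital) ring $A$ from the outset, or argue that the categories actually arising — blocks of smooth representations, equivalently module categories over idempotent-truncated Hecke algebras $e\mathscr{H}(G)e$ — are well enough behaved (abelian, with exact filtered colimits, generated by their finitely generated objects, with enough projectives) for the argument to go through; the paper's categories are noetherian and every object is a union of finitely generated subobjects, which is more than enough. A careful write-up would state the ambient hypotheses on $\mathcal{C}$ explicitly and then cite the Morita theorem; given the paper's style of delegating such foundational facts to references, I would keep this proof to a couple of sentences pointing to the standard statement and verifying only that the three conditions in the definition of progenerator are precisely the hypotheses ``finitely generated projective generator'' needed to apply it.
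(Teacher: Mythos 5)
Your proposal is mathematically sound, but it is worth noting that the paper itself offers no argument at all: its entire proof is the citation ``Theorem 3.3 in \cite{vigneras2003schur}'', which is precisely the statement, in Vign\'eras' setting of smooth $R$-representations and their blocks (equivalently, modules over idempotented Hecke algebras), that a finitely generated projective object with nonzero Hom to every simple object induces the equivalence $\mathcal{C}\cong \operatorname{End}_{\mathcal{C}}(c)\!-\!\textbf{Mod}$. What you do differently is reconstruct the classical Morita argument behind that citation, and your bookkeeping is correct: projectivity gives exactness of $\operatorname{Hom}_{\mathcal{C}}(c,-)$, finite generation gives commutation with arbitrary direct sums, and the condition on simples upgrades to the full generator property via the existence of simple subquotients (which does require, as you say, that $\mathcal{C}$ be a module category in which every nonzero object has a nonzero finitely generated subobject and hence a simple subquotient --- true both for unital rings and for the blocks $\mathfrak{R}_{(\mathscr{P},\rho)}(G)$ viewed as modules over $e\mathscr{H}(G)e$). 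Your flagged obstacle --- that ``some module category'' must be interpreted carefully, since $\mathscr{H}(G)$ is non-unital --- is exactly the point that makes the citation to Vign\'eras the efficient choice: her Theorem 3.3 is formulated for precisely these categories, so the paper gets the statement without redoing the unit/counit verification or the foundational hypotheses you spell out. In short, your route buys self-containedness and makes visible where each of the three progenerator axioms is used; the paper's route buys brevity and sidesteps the non-unital subtleties by outsourcing them. Your closing remark that, in the paper's style, one would simply cite the standard theorem after matching hypotheses is effectively what the author did.
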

\begin{proof}
Theorem 3.3 in \cite{vigneras2003schur}.
\end{proof}
Let $(M, \pi)$ be the semisimple supercuspidal pair associated
to $(\mathscr{P}, \rho)$, given in the arranged form
$M = \prod_I (G_{n_i})^{m_i}$ and $\pi = \boxtimes_I (\pi_i)^{m_i}$.
\begin{thm}\label{iewoih320h0rth23804u87389435hsdubvuigegt794gsf}
\begin{equation*}
\mathfrak{R}^{[M,\pi]}(G) \;{\cong}\;
\mathscr{H}(G, \mathscr{P}_{\tt{max}}, \tilde{\rho})\!-\!\textbf{Mod}
\end{equation*}
with $\mathscr{H}(G, \mathscr{P}_{\tt{max}}, \tilde{\rho})
= \operatorname{End}_G\bigl(\operatorname{ind}_{\mathscr{P}_{\tt{max}}}^G
(\tilde{\rho})\bigr)$.
\end{thm}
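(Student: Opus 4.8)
The plan is to verify that $\operatorname{ind}_{\mathscr{P}_{\tt{max}}}^G(\tilde{\rho})$ is a progenerator of the block $\mathfrak{R}^{[M,\pi]}(G)$ and then invoke Theorem~3.3 of \cite{vigneras2003schur}, which gives the Morita-equivalence with its endomorphism ring. First I would check that $\operatorname{ind}_{\mathscr{P}_{\tt{max}}}^G(\tilde{\rho})$ actually lies in the block: by Theorem~\ref{sdhfohwigobhbweorwgbofegbwr} and Proposition~\ref{siahfiohwigohewrhigoehwrgtfio} the parahoric restriction $\textit{r}_{\mathscr{K}}^G\circ\textit{i}_{\mathscr{P}}^G(\overline{\rho})$ is a sum of copies of $\begin{bf}i\end{bf}_{\mathcal{M}}^{\mathcal{G}}(\overline{\rho})$, whose irreducible subquotients are all of the form $\begin{bf}i\end{bf}_{\mathcal{M}_{\tt{max}}}^{\mathcal{G}}(X)$ for $X\in\Psi$; since every $\tilde{X}\in\hat\Psi$ has all its subquotients in $\Psi$ (Proposition~\ref{shdbgbewrgbwrbigbwr}), the representation $\operatorname{ind}_{\mathscr{P}_{\tt{max}}}^G(\tilde{\rho})$ has all irreducible subquotients among the subquotients of $\operatorname{ind}_{\mathscr{P}}^G(\rho)$, hence sits in $\mathfrak{R}_{(\mathscr{P},\rho)}(G)=\mathfrak{R}^{[M,\pi]}(G)$.

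Next come the three progenerator conditions. Projectivity: $\tilde{\rho}$ is projective in $\mathfrak{R}(\mathscr{P}_{\tt{max}})$ by Corollary~\ref{aisfhwebofbweobfuiehuigbjkbjjiozjbasudgifiibvuvu97u89hb}, and $\operatorname{ind}_{\mathscr{P}_{\tt{max}}}^G$ respects projectivity by Proposition~\ref{aioshoihbfohgbweoifgboiubosdbfoasbf}.iii (or Proposition~\ref{sofhiowbeviobweoifbwoeihfewhfowehoigf}.i applied to the parahoric induction functor, which is $\operatorname{ind}_{\mathscr{P}_{\tt{max}}}^G$ precomposed with inflation); but one must be careful that ``projective in $\mathfrak{R}(G)$'' restricts to ``projective in the block'', which is automatic since the block is a direct summand of $\mathfrak{R}(G)$. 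Finite generation follows the same way, again using Proposition~\ref{aioshoihbfohgbweoifgboiubosdbfoasbf}.iii together with the finite generation of $\tilde{\rho}$ from Corollary~\ref{aisfhwebofbweobfuiehuigbjkbjjiozjbasudgifiibvuvu97u89hb}.

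The crucial and hardest step is condition~(3): for every simple object $V$ of $\mathfrak{R}^{[M,\pi]}(G)$ one must show $\operatorname{Hom}_G\bigl(\operatorname{ind}_{\mathscr{P}_{\tt{max}}}^G(\tilde{\rho}),V\bigr)\neq 0$. By Frobenius reciprocity (Proposition~\ref{aioshoihbfohgbweoifgboiubosdbfoasbf}.ii, equivalently the parahoric adjunction Proposition~\ref{sofhiowbeviobweoifbwoeihfewhfowehoigf}.v) this amounts to $\operatorname{Hom}_{\mathcal{M}_{\tt{max}}}\bigl(\hat{X},\textit{r}_{\mathcal{M}_{\tt{max}}}^G(V)\bigr)\neq 0$ for some $X\in\Psi$. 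The strategy is: a nonzero simple $V$ in the block has nonzero parahoric restriction $\textit{r}_{\mathscr{K}}^G(V)$ (it is level-$0$, so has $\mathscr{K}(1)$-fixed vectors), and by transitivity (Proposition~\ref{sofhiowbeviobweoifbwoeihfewhfowehoigf}.ii) $\textit{r}_{\mathcal{M}}^G(V)=\begin{bf}r\end{bf}_{\mathcal{M}}^{\mathcal{G}}\circ\textit{r}_{\mathcal{K}}^G(V)$ is nonzero because $V$ lies in the Bernstein component of $(\mathcal{M},\overline\rho)$; one then factors $\textit{r}_{\mathcal{M}}^G=\begin{bf}r\end{bf}_{\mathcal{M}}^{\mathcal{M}_{\tt{max}}}\circ\textit{r}_{\mathcal{M}_{\tt{max}}}^G$ and argues that the $\mathcal{M}_{\tt{max}}$-representation $W:=\textit{r}_{\mathcal{M}_{\tt{max}}}^G(V)$ is nonzero with $\begin{bf}r\end{bf}_{\mathcal{M}}^{\mathcal{M}_{\tt{max}}}(W)$ having a subquotient in the inertial class of $\overline\rho$; by Proposition~\ref{siahfiohwigohewrhigoehwrgtfio}, $W$ lies in $\mathfrak{R}^{[\mathcal{M},\overline\rho]_{\mathcal{M}_{\tt{max}}}}(\mathcal{M}_{\tt{max}})$, and its irreducible quotients are among $\Psi$ by that same equivalence; since $\hat{X}$ is the projective cover of $X\in\Psi$, a $\operatorname{Hom}$ from $\hat X$ onto the top of $W$ is nonzero for a suitable $X$. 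The delicate points I expect to fight with are: (i) ensuring $W$ genuinely has an irreducible \emph{quotient} in $\Psi$ rather than merely a subquotient --- this needs the block $\mathfrak{R}^{[\mathcal{M},\overline\rho]_{\mathcal{M}_{\tt{max}}}}(\mathcal{M}_{\tt{max}})$ to have $\Psi$ as its full set of simples, which is exactly Proposition~\ref{siahfiohwigohewrhigoehwrgtfio} transporting the known structure of $\mathfrak{R}^{[\mathcal{M},\overline\rho]_{\mathcal{G}}}(\mathcal{G})$; and (ii) checking that $W\neq 0$, i.e. that a level-$0$ simple in the $G$-block has nonvanishing restriction to $\mathcal{M}_{\tt{max}}$, which I would get from $\textit{r}_{\mathcal{M}}^G(V)\neq 0$ and the factorization through $\mathcal{M}_{\tt{max}}$. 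Once all three conditions hold, Theorem~3.3 of \cite{vigneras2003schur} yields $\mathfrak{R}^{[M,\pi]}(G)\cong\operatorname{End}_G\bigl(\operatorname{ind}_{\mathscr{P}_{\tt{max}}}^G(\tilde{\rho})\bigr)\!-\!\textbf{Mod}$, which is the claim.
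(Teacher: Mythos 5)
Your overall strategy is the paper's: show that $\operatorname{ind}_{\mathscr{P}_{\tt{max}}}^G(\tilde{\rho})$ is a progenerator and invoke Theorem~3.3 of \cite{vigneras2003schur}; your treatment of membership in the block, of projectivity and of finite generation coincides with the paper's proof. The problem is your argument for condition~(3). It hinges on the claim that $\textit{r}_{\mathcal{M}}^G(V)=\mathbf{r}_{\mathcal{M}}^{\mathcal{G}}\bigl(\textit{r}_{\mathscr{K}}^G(V)\bigr)\neq 0$ for every simple $V$ in the block, ``because $V$ lies in the Bernstein component of $(\mathcal{M},\overline{\rho})$''. In the modular setting this is false: membership in the component controls which representations occur as subquotients of induced representations, not the nonvanishing of parahoric restriction. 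Already a cuspidal non-supercuspidal irreducible $V$ (these exist and lie in simple blocks, i.e.\ the case $k=1$, $\mathcal{M}_{\tt{max}}=\mathcal{G}$, which the theorem covers and later uses) has $\textit{r}_{\mathcal{M}}^G(V)=0$, since its $\mathscr{K}(1)$-invariants form a cuspidal $\mathcal{G}$-representation, which $\mathbf{r}_{\mathcal{M}}^{\mathcal{G}}$ kills; the same phenomenon occurs inside genuinely semisimple blocks (take an irreducible whose ``simple factors'' are cuspidal non-supercuspidal). So your route to $W:=\textit{r}_{\mathcal{M}_{\tt{max}}}^G(V)\neq 0$ collapses. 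There is a second overstatement: even granting $W\neq 0$, you assert that $W$ lies in $\mathfrak{R}^{[\mathcal{M},\overline{\rho}]_{\mathcal{M}_{\tt{max}}}}(\mathcal{M}_{\tt{max}})$. The Mackey decomposition of $\textit{r}_{\mathcal{M}_{\tt{max}}}^G\circ\textit{i}_{\mathscr{P}}^G$ (Proposition~\ref{sofhiowbeviobweoifbwoeihfewhfowehoigf}.iii) produces Weyl-conjugates $(\mathcal{M}^s,\overline{\rho}^s)$ that land in $\mathcal{M}_{\tt{max}}$ without being $\mathcal{M}_{\tt{max}}$-conjugate to $(\mathcal{M},\overline{\rho})$ (e.g.\ $\overline{\rho}_1\boxtimes\overline{\rho}_2\boxtimes\overline{\rho}_1$ versus $\overline{\rho}_1\boxtimes\overline{\rho}_1\boxtimes\overline{\rho}_2$ when $\mathcal{M}_{\tt{max}}=\mathcal{G}_2\times\mathcal{G}_1$), so a priori $W$ may have components in other $\mathcal{M}_{\tt{max}}$-blocks; what your projective-cover argument actually requires is that the component of $W$ in the block of $(\mathcal{M},\overline{\rho})$ is nonzero, and your argument does not establish this.

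The paper's proof of condition~(3) avoids both issues by staying at the level of $\mathscr{K}$: level-$0$-ness gives $\textit{r}_{\mathscr{K}}^G(V)\neq 0$; by exactness and Theorem~\ref{sdhfohwigobhbweorwgbofegbwr} every irreducible subquotient of $\textit{r}_{\mathscr{K}}^G(V)$ --- in particular an irreducible subrepresentation --- is a subquotient of $\mathbf{i}_{\mathcal{M}}^{\mathcal{G}}(\overline{\rho})$, hence by Proposition~\ref{siahfiohwigohewrhigoehwrgtfio} of the form $\mathbf{i}_{\mathcal{M}_{\tt{max}}}^{\mathcal{G}}(X)$ with $X\in\Psi$; the adjunction and transitivity of Proposition~\ref{sofhiowbeviobweoifbwoeihfewhfowehoigf}.v, ii then yield a nonzero map $\operatorname{ind}_{\mathscr{P}_{\tt{max}}}^G(\tilde{X}^{\ast})\rightarrow V$, and composing with the surjection $\operatorname{ind}_{\mathscr{P}_{\tt{max}}}^G(\tilde{X})\twoheadrightarrow\operatorname{ind}_{\mathscr{P}_{\tt{max}}}^G(\tilde{X}^{\ast})$ finishes. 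If you want to salvage your formulation, apply the finite-level adjunction once more: it shows that some $X\in\Psi$ embeds into $W$, which is exactly the nonvanishing of the correct block-component; note also that your worry~(i) is harmless, since a subobject is as good as a quotient here --- the composite $\hat{X}\twoheadrightarrow X\hookrightarrow W$ is already nonzero.
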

\begin{proof}
We will check that $Y=\operatorname{ind}_{\mathscr{P}_{\tt{max}}}^G
(\tilde{\rho})$ is a progenerator. Firstly, we have to 
remark that it follows from Proposition \ref{shdbgbewrgbwrbigbwr}
that $Y$ indeed lies in $\mathfrak{R}^{[M,\rho]}(G)$. 
Now we can check with the definition:
\begin{enumerate}
\item Projectivity of $Y$ follows from Corollary
\ref{aisfhwebofbweobfuiehuigbjkbjjiozjbasudgifiibvuvu97u89hb} together with
Proposition \ref{aioshoihbfohgbweoifgboiubosdbfoasbf}.iii;
\item The same references tell us that $Y$ is finitely generated;
\item Let $V\in \operatorname{ob}(\mathfrak{R}^{[M,\pi]}(G))$ be irreducible, i. e. 
$V$ appears as a subquotient in
$\textit{i}_{\mathscr{P}}^G(\overline{\rho})$.
As $V$ is level-$0$, it will not vanish upon application of 
$\textit{r}_{\mathscr{K}}^G$. We conclude henceforth from 
Theorem \ref{sdhfohwigobhbweorwgbofegbwr} that 
$\textit{r}_{\mathscr{K}}^G(V)$ contains a non-zero
$\mathcal{G}$-representation which is isomorphic to a subquotient of
$\begin{bf}i\end{bf}_{\mathcal{M}}^{\mathcal{G}}(\overline{\rho})$.
According to Proposition \ref{siahfiohwigohewrhigoehwrgtfio}, this means
that there is some $X\in \Psi$ such that
\begin{equation*}
\operatorname{Hom}_{\mathcal{G}}\bigl(
\begin{bf}i\end{bf}_{\mathcal{M}_{\tt{max}}}^{\mathcal{G}}(X),
\textit{r}_{\mathscr{K}}^G(V)\bigr) \neq 0.
\end{equation*}
Using 
Proposition \ref{sofhiowbeviobweoifbwoeihfewhfowehoigf}.v and
\ref{sofhiowbeviobweoifbwoeihfewhfowehoigf}.ii, 
this implies the existence of a non-zero $G$-map from
$\textit{i}_{\mathscr{P}_{\tt{max}}}^G
(X)$ to $V$, consequently also
from $\operatorname{ind}_{\mathscr{P}_{\tt{max}}}^G
(\tilde{\rho}^{\ast})$ to $V$. This allows us to finish with the 
desired conclusion
\begin{equation*}
\operatorname{Hom}_G\bigl(
\operatorname{ind}_{\mathscr{P}_{\tt{max}}}^G
(\tilde{X}), V\bigr) \neq 0.\qedhere
\end{equation*}
\end{enumerate}
\end{proof}

Of course, this implies at the same time
\begin{equation*}
\mathfrak{R}^{[M_i, \pi_i^{m_i}]}
({G}_{n_im_i}) \;{\cong}\;
\mathscr{H}({G}_{n_im_i}, \mathscr{K}_{n_im_i}, \tilde{\rho}_i)\!-\!
\textbf{Mod}
\end{equation*}
for any $i\in\{1, \ldots, k\}$, where 
\begin{itemize}
\item $M_i = (G_{n_i})^{m_i}$ is a
Levi-subgroup of ${G}_{n_im_i}$;
\item $\mathscr{K}_{n_im_i} = \operatorname{GL}_{n_im_i}(\mathcal{O})$;
\item $[M_i, \pi_i^{m_i}]$ is the simple supercuspidal pair associated to 
the supertype $(\mathscr{K}_{n_im_i}, \tilde{\rho}_i)$.
\end{itemize}

\section{A bound on intertwining}\label{io8378492698456328765987sjdvbifwgtr}

\subsection{Cuspidal and supercuspidal support for $\mathcal{G}_m$}
\label{askfnsbivobbfewbofiweboib}
Consider an irreducible representation $(V,\pi)$ 
of the finite group $\mathcal{G}_m$. We repeat
\begin{defn}[(Super-)cuspidal support]
There exists a cuspidal representation $\sigma$ of some Levi-subgroup
$\mathcal{M}\subset \mathcal{G}$ such that $V$ is a subrepresentation
of $\begin{bf}i\end{bf}_{\mathcal{M}}^{\mathcal{G}}(\sigma)$.
$(\mathcal{M}, \sigma)$ is unique up to $G$-conjugation (see
\cite{VigLivre}, II.2.20) and the $G$-conjugacy
class $[\mathcal{M}, \sigma]^G$ is called the cuspidal support $\mathfrak{Cs}(V)$
of $\pi$.\\
Analogously, there is a supercuspidal representation $\sigma'$ of some
Levi-subgroup $\mathcal{M}'$ such that $\pi$ is a subquotient of 
$\begin{bf}i\end{bf}_{\mathcal{M}'}^{\mathcal{G}}(\sigma')$.
Then $[\mathcal{M}', \sigma']^G$ is called the supercuspidal support 
$\mathfrak{Ss}(V)$ of $\pi$.
\end{defn}
\begin{thm}[Strong conjugacy theorem]\label{asdfnjwbgfbgborgfouwrbogfb}
Let $V$ be irreducible and $(\mathcal{M}, \pi)$, 
$(\mathcal{M}', \pi')\in \mathfrak{Cs}(V)$.
Then 
\begin{equation*}
\begin{bf}i\end{bf}_{\mathcal{M}}^{\mathcal{G}}(\sigma) \cong
\begin{bf}i\end{bf}_{\mathcal{M}'}^{\mathcal{G}}(\sigma')
\end{equation*}
and $\pi$ is both a subrepresentation and a quotient of
$\begin{bf}i\end{bf}_{\mathcal{M}}^{\mathcal{G}}(\sigma)$.
\end{thm}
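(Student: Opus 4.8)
Write $V$ for the irreducible representation in question and $(\mathcal{M},\sigma),(\mathcal{M}',\sigma')\in\mathfrak{Cs}(V)$ for the two (conjugate) representatives, $\sigma,\sigma'$ cuspidal. The plan is to treat the two assertions separately; the first is formal and the second carries the content. For the isomorphism $\begin{bf}i\end{bf}_{\mathcal{M}}^{\mathcal{G}}(\sigma)\cong\begin{bf}i\end{bf}_{\mathcal{M}'}^{\mathcal{G}}(\sigma')$: by the cited uniqueness of cuspidal support (\cite{VigLivre}, II.2.20) the class $\mathfrak{Cs}(V)$ is a single $\mathcal{G}$-orbit, so $(\mathcal{M}',\sigma')=(\mathcal{M}^g,\sigma^g)$ for some $g\in\mathcal{G}$. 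Conjugation by $g$ carries a parabolic with Levi $\mathcal{M}$ to one with Levi $\mathcal{M}'$ and, being inner, induces the identity on isomorphism classes of $\mathcal{G}$-representations; together with the Howlett-Lehrer theorem (which makes $\begin{bf}i\end{bf}_{\mathcal{M}}^{\mathcal{G}}$ independent of the parabolic chosen) this gives $\begin{bf}i\end{bf}_{\mathcal{M}'}^{\mathcal{G}}(\sigma')\cong\begin{bf}i\end{bf}_{\mathcal{M}}^{\mathcal{G}}(\sigma)$. The same isomorphism, combined with the defining property of $\mathfrak{Cs}(V)$ (that $V$ embeds into $\begin{bf}i\end{bf}^{\mathcal{G}}$ of \emph{some} representative), shows $V$ is a subrepresentation of $\begin{bf}i\end{bf}_{\mathcal{M}}^{\mathcal{G}}(\sigma)$; and after a conjugation we may assume henceforth that $\mathcal{M}$ is a standard Levi subgroup.

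The substance is to show $V$ is also a quotient of $\begin{bf}i\end{bf}_{\mathcal{M}}^{\mathcal{G}}(\sigma)$, and the key is to establish that $\begin{bf}r\end{bf}_{\mathcal{M}}^{\mathcal{G}}(V)$ is \emph{semisimple}. Apply the exact functor $\begin{bf}r\end{bf}_{\mathcal{M}}^{\mathcal{G}}$ (exact because the relevant unipotent radical is a $p$-group and $\ell\neq p$) to an embedding $V\hookrightarrow\begin{bf}i\end{bf}_{\mathcal{M}}^{\mathcal{G}}(\sigma)$ to obtain an embedding $\begin{bf}r\end{bf}_{\mathcal{M}}^{\mathcal{G}}(V)\hookrightarrow\begin{bf}r\end{bf}_{\mathcal{M}}^{\mathcal{G}}\begin{bf}i\end{bf}_{\mathcal{M}}^{\mathcal{G}}(\sigma)$. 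Compute the right-hand side with the Harish-Chandra Mackey formula (Proposition~\ref{wnogivbweiogbiowebghtnwehgfoibweoigbvoibihnoi}.iii), taking both parabolics with Levi $\mathcal{M}$: every summand begins with a parabolic restriction of $\sigma$ to a sub-Levi of $\mathcal{M}$, which vanishes for cuspidal $\sigma$ unless that sub-Levi is $\mathcal{M}$ itself; the distinguished double-coset representatives surviving this are exactly those normalising $\mathcal{M}$, and each then contributes the irreducible $\sigma^w$. Hence $\begin{bf}r\end{bf}_{\mathcal{M}}^{\mathcal{G}}\begin{bf}i\end{bf}_{\mathcal{M}}^{\mathcal{G}}(\sigma)\cong\bigoplus_{w\in N_{\mathcal{G}}(\mathcal{M})/\mathcal{M}}\sigma^w$ is a direct sum of irreducibles (this is the classical computation; it is also available in \cite{VigLivre}), and a submodule of a semisimple module is semisimple, so $\begin{bf}r\end{bf}_{\mathcal{M}}^{\mathcal{G}}(V)$ is semisimple.

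The conclusion is then pure adjunction. From $V\hookrightarrow\begin{bf}i\end{bf}_{\mathcal{M}}^{\mathcal{G}}(\sigma)$ and the adjunction $\operatorname{Hom}_{\mathcal{G}}(V,\begin{bf}i\end{bf}_{\mathcal{M}}^{\mathcal{G}}(\sigma))\cong\operatorname{Hom}_{\mathcal{M}}(\begin{bf}r\end{bf}_{\mathcal{M}}^{\mathcal{G}}(V),\sigma)$ of Proposition~\ref{wnogivbweiogbiowebghtnwehgfoibweoigbvoibihnoi}.iv, the irreducible $\sigma$ is a quotient of $\begin{bf}r\end{bf}_{\mathcal{M}}^{\mathcal{G}}(V)$; by semisimplicity it is a direct summand, hence also a subobject, so $\operatorname{Hom}_{\mathcal{M}}(\sigma,\begin{bf}r\end{bf}_{\mathcal{M}}^{\mathcal{G}}(V))\neq 0$. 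The other adjunction $\operatorname{Hom}_{\mathcal{G}}(\begin{bf}i\end{bf}_{\mathcal{M}}^{\mathcal{G}}(\sigma),V)\cong\operatorname{Hom}_{\mathcal{M}}(\sigma,\begin{bf}r\end{bf}_{\mathcal{M}}^{\mathcal{G}}(V))$ then produces a nonzero $\mathcal{G}$-homomorphism $\begin{bf}i\end{bf}_{\mathcal{M}}^{\mathcal{G}}(\sigma)\to V$, which is surjective since $V$ is irreducible; thus $V$ is a quotient of $\begin{bf}i\end{bf}_{\mathcal{M}}^{\mathcal{G}}(\sigma)$.

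I expect the only real obstacle to be the computation $\begin{bf}r\end{bf}_{\mathcal{M}}^{\mathcal{G}}\begin{bf}i\end{bf}_{\mathcal{M}}^{\mathcal{G}}(\sigma)\cong\bigoplus_w\sigma^w$: reading off from Proposition~\ref{wnogivbweiogbiowebghtnwehgfoibweoigbvoibihnoi}.iii that, for cuspidal $\sigma$, exactly the double cosets of elements normalising $\mathcal{M}$ survive, and that each contributes a single conjugate $\sigma^w$ rather than a genuinely induced, non-semisimple module. If one prefers, this semisimplicity may simply be quoted from \cite{VigLivre}; everything else above is formal manipulation with the adjunctions and with the behaviour of $\mathfrak{Cs}$ under conjugation.
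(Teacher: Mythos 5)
Your argument is correct, but it reaches the second (substantive) assertion by a different route than the paper. For the isomorphism $\mathbf{i}_{\mathcal{M}}^{\mathcal{G}}(\sigma)\cong\mathbf{i}_{\mathcal{M}'}^{\mathcal{G}}(\sigma')$ you and the paper do essentially the same thing (conjugation invariance of Harish-Chandra induction plus Howlett--Lehrer independence of the parabolic; the paper quotes \cite{VigLivre}, I.5.4.iv, for the conjugation step). For the sub-and-quotient statement, however, the paper simply invokes the contravariant duality of \cite{BDC} (Corollary 2.2f): that duality fixes isomorphism classes of irreducibles and commutes with Harish-Chandra induction (cf.\ Proposition \ref{wnogivbweiogbiowebghtnwehgfoibweoigbvoibihnoi}.ii), so an embedding $V\hookrightarrow\mathbf{i}_{\mathcal{M}}^{\mathcal{G}}(\sigma)$ is turned into a surjection $\mathbf{i}_{\mathcal{M}}^{\mathcal{G}}(\sigma)\twoheadrightarrow V$ in one line. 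You instead compute $\mathbf{r}_{\mathcal{M}}^{\mathcal{G}}\mathbf{i}_{\mathcal{M}}^{\mathcal{G}}(\sigma)$ via the Mackey decomposition of Proposition \ref{wnogivbweiogbiowebghtnwehgfoibweoigbvoibihnoi}.iii, observe that for cuspidal $\sigma$ only the double cosets normalising $\mathcal{M}$ survive and each contributes an irreducible conjugate $\sigma^{w}$, deduce that $\mathbf{r}_{\mathcal{M}}^{\mathcal{G}}(V)$ is semisimple, and then pass back and forth with the two adjunctions of Proposition \ref{wnogivbweiogbiowebghtnwehgfoibweoigbvoibihnoi}.iv; this is sound (the cuspidality analysis of the surviving cosets is the classical geometric-lemma computation, valid in the modular setting since only $\ell\neq p$ is used, and it can indeed be quoted from \cite{VigLivre}). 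The trade-off: the paper's duality argument is shorter but imports a nontrivial external result, whereas your argument is more self-contained, stays entirely within the Mackey/adjunction formalism already set up in Section 2, and gives the extra information that $\sigma$ occurs as both a submodule and a quotient of $\mathbf{r}_{\mathcal{M}}^{\mathcal{G}}(V)$, which is a useful fact in its own right.
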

\begin{proof}
The first part follows from the Howlett-Lehrer result together with
\cite{VigLivre}, I.5.4.iv, and the second part follows
from the existence of the contravariant
duality explained in \cite{BDC}, in particular Corollary 2.2f.
\end{proof}
For the next proposition we use the following notation:
Consider two factorisations $ab = a'b'$ of some number $m$.
This gives rise to two Levi-subgroups $\mathcal{M}=
(\mathcal{G}_a)^b$ and $\mathcal{M}'=
(\mathcal{G}_{a'})^{b'}$ of $\mathcal{G}_m$.
Additionally, fix a supercuspidal $\mathcal{G}_{a}$-representation $\delta$
and a supercuspidal $\mathcal{G}_{a'}$-represen\-ta\-ti\-on $\delta'$.
Let $Q$ be a subquotient of 
$\begin{bf}i\end{bf}_{\mathcal{M}}^{\mathcal{G}_m}(\delta^{b})$ and $Q'$ be
a subquotient of
$\begin{bf}i\end{bf}_{\mathcal{M}'}^{\mathcal{G}_m}(\delta'^{b'})$.
Then $\mathfrak{Cs}(Q)$ contains an element $(M, \sigma)$ where $M$ is
of the form $\prod_{I} \mathcal{G}_{c_i}$ (with $\sum_I c_i = m$)
and $\sigma$ of the form $\boxtimes_I \sigma_i$ with the $\sigma_i$ cuspidal.
Just in the same manner, $\mathfrak{Cs}(Q')$ contains an 
element $(M', \sigma')$ where $M'$ is
of the form $\prod_{J} \mathcal{G}_{d_j}$ (with $\sum_J d_j = m$)
and $\sigma'$ of the form $\boxtimes_J \sigma'_j$ with the $\sigma'_j$ cuspidal.
\begin{prop}\label{aishfiowebhgiwbeiogfbiowebfrbweorwebroiweb}
Assume that $\delta$ is not isomorphic to 
$\delta'$ (which has to be checked only if
the two factorisations are identical). Then, for any choice $i\in I, j\in J$, 
the representations $\sigma_i$ and $\sigma'_j$ are not isomorphic (which, again,
is automatic if $c_i$ does not happen to be equal to $d_j$).
\end{prop}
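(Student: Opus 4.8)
The plan is to transport the question to supercuspidal support, where the combinatorics is rigid, and then to feed the resulting conjugacy-class equalities into the Krull-Remak-Schmidt input of Lemma~\ref{svbjuwebewoigbofgoewibfgoiwehbfg}. First I would dispose of the two parenthetical remarks: if $a\neq a'$ then $\delta$ and $\delta'$ are representations of the non-isomorphic groups $\mathcal{G}_a$, $\mathcal{G}_{a'}$, so $\delta\not\cong\delta'$ holds automatically; and if $c_i\neq d_j$ then $\sigma_i$, $\sigma'_j$ are representations of $\mathcal{G}_{c_i}\not\cong\mathcal{G}_{d_j}$, so the conclusion is automatic. Hence I may assume $a=a'$, $b=b'$ and $c_i=d_j$, and argue by contradiction, supposing $\sigma_i\cong\sigma'_j$.

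The heart of the argument is to compute $\mathfrak{Ss}(Q)$ in two ways. Since $\delta$ is supercuspidal and $Q$ is a subquotient of $\mathbf{i}^{\mathcal{G}_m}_{\mathcal{M}}(\delta^{b})$, one reads off directly $\mathfrak{Ss}(Q)=[(\mathcal{G}_a)^b,\delta^b]^{\mathcal{G}_m}$. On the other hand, $(M,\sigma)\in\mathfrak{Cs}(Q)$ means $Q$ is a subquotient of $\mathbf{i}^{\mathcal{G}_m}_{M}(\sigma)$ with $\sigma=\boxtimes_I\sigma_i$; writing $\mathfrak{Ss}(\sigma_i)=[\mathcal{N}_i,\upsilon_i]^{\mathcal{G}_{c_i}}$ with $\upsilon_i$ supercuspidal, the outer tensor product $\sigma$ is a subquotient of the Harish-Chandra induction of $\boxtimes_I\upsilon_i$ from $\prod_I\mathcal{N}_i$ inside $M$, so by transitivity (Proposition~\ref{wnogivbweiogbiowebghtnwehgfoibweoigbvoibihnoi}.i) and exactness of Harish-Chandra induction, $Q$ is a subquotient of $\mathbf{i}^{\mathcal{G}_m}_{\prod_I\mathcal{N}_i}(\boxtimes_I\upsilon_i)$, whence $\mathfrak{Ss}(Q)=[\prod_I\mathcal{N}_i,\boxtimes_I\upsilon_i]^{\mathcal{G}_m}$. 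Equating the two descriptions via uniqueness of the supercuspidal support up to conjugacy, the Levi $\prod_I\mathcal{N}_i$ is $\mathcal{G}_m$-conjugate, hence isomorphic, to $(\mathcal{G}_a)^b$; Lemma~\ref{svbjuwebewoigbofgoewibfgoiwehbfg} then forces every general linear block of every $\mathcal{N}_i$ to have size $a$, so $\mathcal{N}_i=(\mathcal{G}_a)^{c_i/a}$, and since the conjugacy only permutes these $b$ blocks while $\delta^b$ is constant over them, $\boxtimes_I\upsilon_i$ must assign $\delta$ to each block after reordering; thus $\mathfrak{Ss}(\sigma_i)=[(\mathcal{G}_a)^{c_i/a},\delta^{c_i/a}]^{\mathcal{G}_{c_i}}$.

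Running the identical computation with $Q'$, $\mathcal{M}'$, $\delta'$ yields $\mathfrak{Ss}(\sigma'_j)=[(\mathcal{G}_{a'})^{d_j/a'},(\delta')^{d_j/a'}]^{\mathcal{G}_{d_j}}$. Now if $\sigma_i\cong\sigma'_j$, then (recall $c_i=d_j$) their supercuspidal supports coincide, so $[(\mathcal{G}_a)^{c_i/a},\delta^{c_i/a}]=[(\mathcal{G}_{a'})^{c_i/a'},(\delta')^{c_i/a'}]$ as conjugacy classes in $\mathcal{G}_{c_i}$. Lemma~\ref{svbjuwebewoigbofgoewibfgoiwehbfg} forces $a=a'$, and then comparing the representations carried by the (permuted) size-$a$ blocks forces $\delta\cong\delta'$, contradicting the hypothesis.

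The step I expect to be the main obstacle is the comparison: extracting, from a single conjugacy-class equality of supercuspidal supports, the per-block assertion that every block has size $a$ and carries $\delta$. This is precisely where Lemma~\ref{svbjuwebewoigbofgoewibfgoiwehbfg} is indispensable, and where one crucially uses that $\delta^b$ is invariant under permuting its tensor factors, so that passing to conjugacy classes loses no information about which supercuspidal sits on a given block. Everything else — transitivity and exactness of Harish-Chandra induction, and the fact that an outer tensor product of supercuspidal representations is supercuspidal — is routine bookkeeping.
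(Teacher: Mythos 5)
Your argument is correct in substance, but it takes a genuinely different route from the paper's. The paper proves the disjointness by running the explicit Dipper--James/Vign\'eras parametrization: it writes $\delta=\pi(s,a)$, realises $Q$ as $\pi(I)$ for an $\ell$-t\^ete $I=(s,a;\mu,b)$, passes to an associated $\ell$-pied sp\'ecial to exhibit a representative of $\mathfrak{Cs}(Q)$ \emph{all} of whose cuspidal factors are of the form $\pi(s,a_j)$ for the same $s$, and concludes because $s$ and $s'$ are not associate when $\delta\not\cong\delta'$. You instead stay at the level of supercuspidal supports: compute $\mathfrak{Ss}(Q)$ in two ways (via transitivity and exactness of Harish-Chandra induction), use conjugacy of Levi pairs in $\mathcal{G}_m$ to force every block of every $\mathcal{N}_i$ to have size $a$ and to carry $\delta$, and then compare $\mathfrak{Ss}(\sigma_i)$ with $\mathfrak{Ss}(\sigma'_j)$. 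This is cleaner and avoids the $\ell$-t\^ete combinatorics, but it leans on the well-definedness (uniqueness up to conjugation) of the supercuspidal support for irreducible modular representations of $\mathcal{G}_m$: the paper's definition of $\mathfrak{Ss}$ tacitly assumes this but neither proves nor cites it, and for $\mathcal{G}_m$ it is itself a consequence of exactly the Vign\'eras III.2 classification that the paper's proof invokes directly. So the two arguments rest on the same deep input, packaged differently; the paper's version is anchored in citable numbered statements, yours is shorter once uniqueness of $\mathfrak{Ss}$ is granted. (Also, Lemma~\ref{svbjuwebewoigbofgoewibfgoiwehbfg} is stronger than you need: conjugate Levi subgroups of $\mathcal{G}_m$ have the same multiset of block sizes for elementary reasons.)

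One presentational slip: the reduction ``hence I may assume $a=a'$, $b=b'$'' is not justified by the observation that the hypothesis $\delta\not\cong\delta'$ is automatic when $a\neq a'$ --- the conclusion still has to be proved in that case, since $c_i=d_j$ can perfectly well occur with $a\neq a'$. Fortunately your argument never uses this assumption: the final comparison of supercuspidal supports \emph{derives} $a=a'$ and then $\delta\cong\delta'$, so the general case is covered; simply delete the spurious reduction.
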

\begin{proof}
We will use the language and theorems
of \cite{VigLivre}, III.2.3-5. First, $\delta$ can be
written as $\pi(s, a)$, where $s\in 
\overline{\mathbb{F}}_q$ is of degree $a$ 
over $\mathbb{F}_q$.
$Q$ is then of the form $\pi(\delta, \mu) = \pi(I)$, where $\mu$ is a
partition of $b$ and $I$ denotes the $\ell$-t\^{e}te
$(s,a; \mu, b)$. We use Lemma III.2.3.1 in \cite{VigLivre} to associate to $I$
a certain $\ell$-pied sp\'{e}cial $J = ((s, a_j; \mu_j, b_j))_j$ such 
that $\pi(I)=\pi(J)$ (see Thm. III.2.5 in \cite{VigLivre}), where it is important that
we extract from the proof of Lemma II.2.3.1 that $s$ indeed equals the one
used in the definition of $I$. According to Cor. III.2.5, we can 
take as representative of $\mathfrak{Cs}(Q)$
\begin{equation*}
M = \prod_j (\mathcal{G}_{a_j})^{b_j} \text{ and } 
\sigma = \boxtimes_j \pi(s, a_j)^{b_j}
\end{equation*}
We can do the same things with $Q'$ and conclude
\begin{equation*}
M' = \prod_{j'} (\mathcal{G}_{a'_{j'}})^{b'_{j'}} \text{ and } 
\sigma' = \boxtimes_{j'} \pi(s', a'_{j'})^{b'_{j'}},
\end{equation*}
where it is critical that $s$ and $s'$ are not associated (i. e. 
$s \neq \tau(s)$ for all $\tau\in \operatorname{Gal}(
\overline{\mathbb{F}}_q / {\mathbb{F}}_q)$, see section III.2.2 in 
\cite{VigLivre})
because $\delta$ and $\delta'$ are not isomorphic. This clearly 
implies that no factor of $\sigma'$ can be isomorphic to any 
factor of $\sigma$.
\end{proof}

\subsection{Generalities on intertwining}\label{asfhoiwgiowebiowehfiowehfio}
Consider two parahoric subgroups 
$\mathscr{Q}, \mathscr{P}$ of $G$, a 
$\mathscr{Q}$-representation 
$\kappa$ (inflated from 
a representation $\overline{\kappa}$
of $\mathcal{M}=\mathscr{Q}/\mathscr{Q}(1)$)
and a $\mathscr{P}$-representation 
$\pi$ (inflated from 
a representation $\overline{\pi}$
of $\mathcal{N}=\mathscr{P}/\mathscr{P}(1)$).
Consider the space
\begin{equation*}
M_G(\kappa, \pi) = 
\operatorname{Hom}_G\left( 
\textit{i}_{\mathscr{Q}}^G(\overline{\kappa}),
\textit{i}_{\mathscr{P}}^G(\overline{\pi})\right).
\end{equation*}
Using Proposition \ref{sofhiowbeviobweoifbwoeihfewhfowehoigf}.v (and sticking
to the notation of \cite{VigLivre}, Chapter I.8.5), 
we can write this as the model
\begin{equation*}
M_G'(\kappa, \pi) = 
\operatorname{Hom}_{\mathcal{M}}\left( 
\overline{\kappa},
\bigoplus_{g\in \mathscr{Q}\backslash G/
\mathscr{P}}
F^G_{\mathscr{Q}(q)g\mathscr{P}(q)}
(\overline{\pi})
\right).
\end{equation*}
Another model is the space
$M_G''(\kappa, \pi)$ consisting of maps
$f: G\rightarrow \operatorname{Hom}_R(V_{\pi}, V_{\kappa})$
which fulfill
\begin{itemize}
\item $f(pxq) = \pi(p)\circ f(x) \circ \kappa(q)$ for all $p\in \mathscr{P},
x\in G, q\in \mathscr{Q}$;
\item $f$ is supported on finitely many double cosets 
$\mathscr{P}x\mathscr{Q}$;
\end{itemize} 
(where $V_{\pi}$ denotes the underlying space of $\pi$).
\begin{defn}[Intertwining]
The Intertwining set $\mathcal{I}_G(\kappa, \pi)\subset 
\mathscr{P}\backslash G/\mathscr{Q}$ is the set of all double 
cosets $\mathscr{P}x\mathscr{Q}$ for which there exists an $f\in
M_G''(\kappa, \pi)$ such that $f(x)\neq 0$. 
\end{defn}
\begin{obs}
It follows readily from the explanations in I.8.5 in \cite{VigLivre} that
$\mathscr{P}x\mathscr{Q} \in \mathcal{I}_G(\kappa, \pi)$
if and only if there is a map in $ 
M_G'(\kappa, \pi)$ which has non-zero contribution to the 
$\mathscr{Q}x^{-1}\mathscr{P}$-th summand. This is the case if and only
if 
\begin{equation*}
\operatorname{Hom}_{\mathscr{Q}\cap x\mathscr{P}x^{-1}} (\kappa, \pi^x) \neq 0.
\end{equation*}

\end{obs}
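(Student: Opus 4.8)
The plan is to establish the two displayed equivalences separately.

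For the first one, recall that $\mathcal I_G(\kappa,\pi)$ is read off from the model $M_G''$: since the restriction of an $f\in M_G''$ to a union of double cosets again lies in $M_G''$, the space $M_G''$ is the direct sum over $\mathscr P\backslash G/\mathscr Q$ of its subspaces of functions supported on a single double coset, and $\mathscr P x\mathscr Q\in\mathcal I_G(\kappa,\pi)$ exactly when the subspace attached to $\mathscr P x\mathscr Q$ is non-zero. I would then run the canonical isomorphisms $M_G''\cong M_G\cong M_G'$: the first is the description of $\operatorname{Hom}_G\bigl(\operatorname{ind}_{\mathscr Q}^G(\kappa),\operatorname{ind}_{\mathscr P}^G(\pi)\bigr)$ by double-coset functions from \cite{VigLivre}, I.8.5, and the second is Proposition~\ref{sofhiowbeviobweoifbwoeihfewhfowehoigf}.v combined with the parahoric Mackey-decomposition Proposition~\ref{sofhiowbeviobweoifbwoeihfewhfowehoigf}.iii. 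The essential point, which is exactly what \cite{VigLivre}, I.8.5 provides, is that this composite isomorphism matches the support-decomposition of $M_G''$ with the decomposition of $M_G'$ into the pieces $\operatorname{Hom}_{\mathcal M}\bigl(\overline\kappa,F^G_{\mathscr Q(q)g\mathscr P(q)}(\overline\pi)\bigr)$: a function supported on $\mathscr P x\mathscr Q$ goes to a $G$-map whose image inside $r_{\mathscr Q}^G\circ i_{\mathscr P}^G(\overline\pi)$ lands in the summand attached to that double coset, and the parahoric Mackey-index $g\in\mathscr Q\backslash G/\mathscr P$ is matched with the support-index $\mathscr P\backslash G/\mathscr Q$ by $g\mapsto g^{-1}$. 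This yields the first equivalence.

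For the second equivalence I would compute the summand $\operatorname{Hom}_{\mathcal M}\bigl(\overline\kappa,F^G_{\mathscr Q(q)x^{-1}\mathscr P(q)}(\overline\pi)\bigr)$; note first that because $\overline\kappa$ is finite-dimensional, any $\mathcal M$-map out of it hits only finitely many summands, so the existence of a map in $M_G'$ with non-zero contribution to the $\mathscr Q x^{-1}\mathscr P$-th summand is equivalent to non-vanishing of this $\operatorname{Hom}$. Since $i_{\mathscr P}^G$ is $\operatorname{ind}_{\mathscr P}^G$ precomposed with trivial inflation and $r_{\mathscr Q}^G$ is the functor of $\mathscr Q(1)$-invariants, the parahoric Mackey-decomposition of Proposition~\ref{sofhiowbeviobweoifbwoeihfewhfowehoigf}.iii is the $\mathscr Q(1)$-invariant part of the ordinary Mackey-decomposition (Proposition~\ref{aioshoihbfohgbweoifgboiubosdbfoasbf}.iv) of $\operatorname{res}_{\mathscr Q}^G\operatorname{ind}_{\mathscr P}^G(\pi)$; reindexing so that the parahoric index $x^{-1}$ corresponds to the ordinary index $x$, this gives
\begin{equation*}
F^G_{\mathscr Q(q)x^{-1}\mathscr P(q)}(\overline\pi)\;=\;\Bigl(\operatorname{ind}_{\mathscr Q\cap x\mathscr P x^{-1}}^{\mathscr Q}\circ\operatorname{Int}(x)\circ\operatorname{res}_{\mathscr P\cap x^{-1}\mathscr Q x}^{\mathscr P}(\pi)\Bigr)^{\mathscr Q(1)}
\end{equation*}
as a representation of $\mathcal M=\mathscr Q/\mathscr Q(1)$. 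As $\kappa$ is inflated from $\overline\kappa$, every $\mathscr Q$-map from $\kappa$ into a smooth $\mathscr Q$-representation $W$ has image in $W^{\mathscr Q(1)}$, so $\operatorname{Hom}_{\mathcal M}(\overline\kappa,W^{\mathscr Q(1)})=\operatorname{Hom}_{\mathscr Q}(\kappa,W)$; applying this and then Frobenius reciprocity for the open, co-compact subgroup $\mathscr Q\cap x\mathscr P x^{-1}\subset\mathscr Q$ (Proposition~\ref{aioshoihbfohgbweoifgboiubosdbfoasbf}.ii) yields
\begin{equation*}
\operatorname{Hom}_{\mathcal M}\bigl(\overline\kappa,F^G_{\mathscr Q(q)x^{-1}\mathscr P(q)}(\overline\pi)\bigr)\;\cong\;\operatorname{Hom}_{\mathscr Q\cap x\mathscr P x^{-1}}\bigl(\kappa,\operatorname{Int}(x)\circ\operatorname{res}_{\mathscr P\cap x^{-1}\mathscr Q x}^{\mathscr P}(\pi)\bigr).
\end{equation*}
Finally, $\operatorname{Int}(x)\circ\operatorname{res}_{\mathscr P\cap x^{-1}\mathscr Q x}^{\mathscr P}(\pi)$ is by definition the representation $y\mapsto\pi(x^{-1}yx)$ of $\mathscr Q\cap x\mathscr P x^{-1}$, that is, the restriction of $\pi^x$; hence the right-hand side equals $\operatorname{Hom}_{\mathscr Q\cap x\mathscr P x^{-1}}(\kappa,\pi^x)$, which closes the chain.

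Essentially everything in the second step is a routine chain of adjunctions. The single delicate point is the bookkeeping in the first step: verifying that under $M_G''\cong M_G\cong M_G'$ the single-double-coset support subspaces correspond to the individual parahoric Mackey-summands, and getting the index inversion $\mathscr P x\mathscr Q\leftrightarrow\mathscr Q x^{-1}\mathscr P$ right (it is dictated by the conventions for Frobenius reciprocity and for the Mackey index). I would settle this by tracing a function supported on one double coset explicitly through both isomorphisms rather than reproving \cite{VigLivre}, I.8.5; this is the part I expect to be the main obstacle to a completely self-contained argument.
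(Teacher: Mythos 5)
Your argument is correct and is precisely the unwinding of \cite{VigLivre}, I.8.5 that the paper itself invokes without proof: you match the support-decomposition of $M_G''$ with the parahoric Mackey-summands of $M_G'$ (with the index inversion $\mathscr{P}x\mathscr{Q}\leftrightarrow\mathscr{Q}x^{-1}\mathscr{P}$), and then reduce the summand at $x^{-1}$ to $\operatorname{Hom}_{\mathscr{Q}\cap x\mathscr{P}x^{-1}}(\kappa,\pi^x)$ by the inflation--invariants adjunction and Frobenius reciprocity. Since the paper gives no independent argument beyond the citation, your proposal follows essentially the same route, just spelled out.
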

\begin{rem}
Inspired from this observation, 
we will say also that $\mathscr{Q}x^{-1}\mathscr{P}$ is in the intertwining
set. This can lead to confusion only if $\mathscr{P} = 
\mathscr{Q}$ and $\pi\neq \kappa$, and we will not encounter
this situation in the sequel. Remark that we could have a more
uniform notation of this if we used a version of Mackey's
decomposition which sums over $\mathscr{P}\backslash G/
\mathscr{Q}$ instead of $\mathscr{Q}\backslash G/
\mathscr{P}$ (and indeed this is preferred by some authors).
\end{rem}
\subsection{Intertwining of two subquotients}
We introduce the Levi-subgroup 
\begin{equation*}
M_{\tt{max}} = \prod_{i=1}^k G_{a_ib_i}\subset G.
\end{equation*}
\begin{thm}\label{shvbweogfbowehofwehofhgowehgfowbroubhwoegboiwe}
Let $X=\boxtimes_{i=1}^k X_i$ and $Y=\boxtimes_{i=1}^k Y_i$ be two
elements of $\Psi$.
Then 
\begin{equation*}
\operatorname{Hom}_G\left(\textit{i}_{\mathcal{M}_{\tt{max}}}^G(X),
\textit{i}_{\mathcal{M}_{\tt{max}}}^G(Y)\right)
\end{equation*}
has intertwining contained in 
$\mathscr{P}_{\tt{max}}M_{\tt{max}}\mathscr{P}_{\tt{max}}$.
\end{thm}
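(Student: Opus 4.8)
The statement concerns the intertwining set of $\operatorname{Hom}_G\bigl(\textit{i}_{\mathcal{M}_{\tt{max}}}^G(X),\textit{i}_{\mathcal{M}_{\tt{max}}}^G(Y)\bigr)$, where $X=\boxtimes_i X_i$ and $Y=\boxtimes_i Y_i$ lie in $\Psi$, so each $X_i$, $Y_i$ is an irreducible subquotient of $\begin{bf}i\end{bf}_{(\mathcal{G}_{n_i})^{m_i}}^{\mathcal{G}_{n_im_i}}\bigl((\overline{\rho}_i)^{m_i}\bigr)$. First I would rewrite everything in terms of the standard parahoric $\mathscr{P}_{\tt{max}}$ with reductive quotient $\mathcal{M}_{\tt{max}}=\prod_i\mathcal{G}_{n_im_i}$: by the adjunctions and the parahoric Mackey-decomposition (Proposition~\ref{sofhiowbeviobweoifbwoeihfewhfowehoigf}.iii, v), the intertwining set is governed by which double cosets $\mathscr{P}_{\tt{max}}d\mathscr{P}_{\tt{max}}$ (with $d$ ranging over the representatives $D=S_n\cdot\Lambda$ from Proposition~\ref{sofhiowbeviobweoifbwoeihfewhfowehoigf}.iv) admit a non-zero contribution, i.e.\ for which $\operatorname{Hom}_{\mathscr{P}_{\tt{max}}\cap d\mathscr{P}_{\tt{max}}d^{-1}}(\tilde X^\ast, (\tilde Y^\ast)^d)\neq 0$. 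Concretely this reduces, via the functor $F^G_{\mathscr{P}_{\tt{max}}(q)d\mathscr{P}_{\tt{max}}(q)}$, to a statement purely about Harish-Chandra induction inside the finite groups: a $d$ survives only if a certain parabolic-restriction-then-reinduce composite applied to $X$ has $Y$ as a sub/quotient.

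**Key steps in order.** (1) Using the explicit shape of $\mathscr{P}_{\tt{max}}$ and a matrix computation in the spirit of the proposition in Section~\ref{siodhfioebhgoiweigbewboibrfowe}, identify which $d=s\lambda\in D$ can possibly survive: the block structure of $\mathcal{M}_{\tt{max}}=\prod_i\mathcal{G}_{n_im_i}$ forces $\mathcal{P}_{\lambda}=f_{\mathscr{K}}(\lambda\mathscr{P}_{\tt{max}}\lambda^{-1}\cap\mathscr{K})$ to be a parabolic subgroup of $\mathcal{G}$ whose Levi factor is a product of $\mathcal{G}_{c}$'s, and $s$ must permute these. (2) For such a surviving $d$, the surviving summand is (up to $\mathcal{M}_{\tt{max}}$-isomorphism, arguing as in Lemma~\ref{askdhfwbvoiweboifwefhiowegbfiwoe}) essentially $\begin{bf}r\end{bf}$ of $X$ to some Levi, conjugated, then $\begin{bf}i\end{bf}$ back up, and we need a non-zero map to the analogous object built from $Y$. (3) Now push down to cuspidal supports: both $X$ and $Y$ have cuspidal support made of factors drawn from the supercuspidal blocks of the $\overline{\rho}_i$, and by Proposition~\ref{aishfiowebhgiwbeiogfbiowebfrbweorwebroiweb} (the disjointness statement of Dipper--James--Green type) the $i$-th ``stratum'' of $X$ and the $j$-th stratum of $Y$ share a common cuspidal constituent \emph{only if $i=j$}. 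This disjointness forces the permutation $s$ to respect the grouping of coordinates into the $k$ blocks $\{1,\dots\}$ of sizes $n_im_i$; equivalently $s$ lies in $\prod_i \mathcal{S}$ permuting within the $\mathcal{G}_{n_im_i}$-blocks but not across them. (4) Translate ``$s$ respects the block decomposition indexed by $i=1,\dots,k$'' into the group-theoretic statement $d\in \mathscr{P}_{\tt{max}}M_{\tt{max}}\mathscr{P}_{\tt{max}}$, using that $M_{\tt{max}}=\prod_i G_{a_ib_i}$ is precisely the $F$-points of the Levi compatible with that block grouping and that $\mathscr{I}\subset\mathscr{P}_{\tt{max}}$ so double-coset representatives may be taken in $D$ — this gives the claimed containment of the intertwining set.

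**Main obstacle.** The crux is step (3): converting the representation-theoretic disjointness of cuspidal supports (Proposition~\ref{aishfiowebhgiwbeiogfbiowebfrbweorwebroiweb}) into a rigid combinatorial constraint on the Weyl-group element $s$. One has to be careful that the surviving Mackey summand really does ``see'' the full cuspidal support of $X$ and $Y$ — this is where the strong conjugacy theorem (Theorem~\ref{asdfnjwbgfbgborgfouwrbogfb}) and the fact that cuspidal representations of $\mathcal{G}_{c}$ are both sub \emph{and} quotient of the relevant Harish-Chandra induced module are used, so that a non-zero intertwiner between the two induced modules genuinely matches up cuspidal supports factor-by-factor. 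Granting that, the bookkeeping that a support-preserving $s$ must be of the block-diagonal form, and that this is exactly membership in $\mathscr{P}_{\tt{max}}M_{\tt{max}}\mathscr{P}_{\tt{max}}$, is routine but needs to be written out with the explicit indices; I would invoke the normaliser description of Young subgroups (as in the proof of Lemma~\ref{askdhfwbvoiweboifwefhiowegbfiwoe}, following Borevich--Gavron) to keep that clean. The remaining steps are matrix computations and formal manipulations of the functors already established in Section~\ref{siodhfioebhgoiweigbewboibrfowe}.
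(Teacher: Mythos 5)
Your outline follows the same broad strategy as the paper (replace $X$ and $Y$ by their cuspidal supports, then invoke Lemma~\ref{askdhfwbvoiweboifwefhiowegbfiwoe} and the disjointness statement of Proposition~\ref{aishfiowebhgiwbeiogfbiowebfrbweorwebroiweb} to force the conjugating Weyl element into $\mathcal{M}_{\tt{max}}$), but the order in which you perform the two reductions creates a genuine gap. In steps (1)--(2) you propose to run the Mackey survival analysis of Section~\ref{siodhfioebhgoiweigbewboibrfowe} directly on the double cosets $\mathscr{P}_{\tt{max}}d\mathscr{P}_{\tt{max}}$ with the representations $X$, $Y$ in place. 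That analysis, however, rests on the criterion ``$d$ survives iff the parabolic-restriction step is trivial'', which is valid only because $\overline{\rho}$ is \emph{cuspidal}; the $X_i$, $Y_i$ are merely subquotients of Harish-Chandra induced modules and have nonvanishing parabolic restriction along many proper parabolics, so the matrix computation identifying the surviving $d=s\lambda$ does not carry over, and a priori far more cosets contribute nonzero Mackey summands than your step (1) allows.

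The paper resolves this by reversing the order: it first uses Theorem~\ref{asdfnjwbgfbgborgfouwrbogfb} to realise $X$ as a quotient of, and $Y$ as a sub of, Harish-Chandra inductions of cuspidal pairs $(\mathcal{M}_1,\pi_1)$, $(\mathcal{M}_2,\pi_2)$ inside $\mathcal{M}_{\tt{max}}$, and then builds an explicit support-non-decreasing map $\Theta$ from intertwiners of the $(X,Y)$-problem to intertwiners of the cuspidal $(\pi_1,\pi_2)$-problem (composing with the surjection and the injection, so $f(g)\neq 0$ forces $\Theta(f)(g)\neq 0$), together with a separate argument showing that a nonzero intertwiner of $M''_G\bigl(\mathbf{i}_{\mathcal{M}_1}^{\mathcal{M}_{\tt{max}}}(\pi_1),\mathbf{i}_{\mathcal{M}_2}^{\mathcal{M}_{\tt{max}}}(\pi_2)\bigr)$ supported on $\mathscr{P}_{\tt{max}}g_0\mathscr{P}_{\tt{max}}$ yields one for $M''_G(\pi_1,\pi_2)$ supported on some $\mathscr{P}_1kg_0k'\mathscr{P}_2$. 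Only after this coset-by-coset transfer does the cuspidal machinery (Lemma~\ref{ashfiowehgiwehighweihfioewhriweh}, Lemma~\ref{askdhfwbvoiweboifwefhiowegbfiwoe}, Proposition~\ref{aishfiowebhgiwbeiogfbiowebfrbweorwebroiweb}, and the known bound from Vign{\'e}ras, IV.3.2--3, on the intertwining of a cuspidal parahoric type) apply. You have correctly flagged exactly this transfer as the ``main obstacle'', but flagging it is not the same as supplying it: without the explicit construction of $\Theta$ and of the section/retraction maps $s,t$ used in the paper's Step~3, matching cuspidal supports only controls whether the global Hom space vanishes, not \emph{on which double cosets} the intertwining is carried, which is what the theorem asserts.
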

\begin{proof}
\underline{Step 1}: In accordance with Section \ref{askfnsbivobbfewbofiweboib},
can find two cuspidal representations $\pi_1, \pi_2$ of (standard)
Levi-subgroups
$\mathcal{M}_1,\mathcal{M}_2\subset \mathcal{M}_{\tt{max}}$ such that there
are maps
\begin{equation*}
p:\begin{bf}i\end{bf}_{\mathcal{M}_1}^{\mathcal{M}_{\tt{max}}}(\pi_1)
\twoheadrightarrow X\quad \text{and}\quad 
\imath: Y\hookrightarrow 
\begin{bf}i\end{bf}_{\mathcal{M}_2}^{\mathcal{M}_{\tt{max}}}(\pi_2).
\end{equation*}
\underline{Step 2}: Define a map
\begin{equation*}
\Theta: 
\operatorname{Hom}_G\left(\textit{i}_{\mathcal{M}_{\tt{max}}}^G(X),
\textit{i}_{\mathcal{M}_{\tt{max}}}^G(Y)\right)
\rightarrow
\operatorname{Hom}_G\left(\textit{i}_{\mathcal{M}_{1}}^G(\pi_1),
\textit{i}_{\mathcal{M}_{2}}^G(\pi_2)\right)
\end{equation*}
by sending $\varphi$ 
to $\textit{i}_{\mathcal{M}_{\tt{max}}}^G(\imath)\circ \varphi\circ
\textit{i}_{\mathcal{M}_{\tt{max}}}^G(p)$. By translation 
into the model $M''$ of Section
\ref{asfhoiwgiowebiowehfiowehfio}, $\Theta$ can be understood as the map
\begin{equation*}
M_G''(X, Y)\rightarrow 
M_G''\left( \begin{bf}i\end{bf}_{\mathcal{M}_1}^{\mathcal{M}_{\tt{max}}}(\pi_1),
\begin{bf}i\end{bf}_{\mathcal{M}_2}^{\mathcal{M}_{\tt{max}}}(\pi_2)\right)
\text{ defined by } \Theta(f): g\mapsto 
\imath\circ f(g) \circ p.
\end{equation*}
$\Theta$ respects the support in the sense that $f(g)\neq 0$ implies
$\Theta(f)(g)\neq 0$.\newline
\underline{Step 3}: By transitivity of the parahoric induction, it is 
clear that 
\begin{equation*}
M_G''\left( \begin{bf}i\end{bf}_{\mathcal{M}_1}^{\mathcal{M}_{\tt{max}}}(\pi_1),
\begin{bf}i\end{bf}_{\mathcal{M}_2}^{\mathcal{M}_{\tt{max}}}(\pi_2)\right)
\cong 
M_G''(\pi_1, \pi_2).
\end{equation*}
This identity is compatible with the intertwining set in the following sense:
\begin{prop}
Define $\mathscr{P}_i$ to be the standard parahoric subgroup in $G$ with
the property $\mathscr{P}_i/\mathscr{P}_i(1) = \mathcal{M}_i$ ($i=1,2$).\\
Let $g_0 \in G$ and assume that $\mathscr{P}_1 kg_0k'\mathscr{P}_2$
is not in the intertwining of $M''(\pi_1,\pi_2)$ for all choices 
$k,k'\in \mathscr{P}_{\tt{max}}$. Then $\mathscr{P}_{\tt{max}}g_0
\mathscr{P}_{\tt{max}}$ is not in the intertwining of
$M_G''\left( \begin{bf}i\end{bf}_{\mathcal{M}_1}^{\mathcal{M}_{\tt{max}}}(\pi_1),
\begin{bf}i\end{bf}_{\mathcal{M}_2}^{\mathcal{M}_{\tt{max}}}(\pi_2)\right)
$.
\end{prop}
\begin{proof}[Proof of the proposition.]\renewcommand{\qedsymbol}{$\Diamond$}
Assume, that $\mathscr{P}_{\tt{max}}g_0
\mathscr{P}_{\tt{max}}$ is in the intertwining of
$M_G''\left( \begin{bf}i\end{bf}_{\mathcal{M}_1}^{\mathcal{M}_{\tt{max}}}(\pi_1),
\begin{bf}i\end{bf}_{\mathcal{M}_2}^{\mathcal{M}_{\tt{max}}}(\pi_2)\right)
$. Then there is an $f$ in this set for which 
we can fix a $g\in \mathscr{P}_{\tt{max}}g_0
\mathscr{P}_{\tt{max}}$ and write $f(g)(\zeta_1) = \zeta_2 + r$ for suitable
choices of
\begin{itemize}
\item $\gamma_i \in 
\mathcal{M}_{\tt{max}}$ ($i=1,2$);
\item $\zeta_i \in \begin{bf}i\end{bf}_{\mathcal{M}_i}^{\mathcal{M}_{
\tt{max}}}(\pi_i)$ non-zero with support $\mathscr{P}_i\cdot \gamma_i$
($i=1,2$);
\item $r\in \begin{bf}i\end{bf}_{\mathcal{M}_2}^{\mathcal{M}_{
\tt{max}}}(\pi_2)$ supported outside $\mathscr{P}_2\cdot \gamma_2$.
\end{itemize}
By replacing $g$ by $\tilde{\gamma}_2^{-1}g\tilde{\gamma}_1^{-1}$ (where
$\tilde{\gamma}_i$ denotes a lift of $\gamma_i$ to 
$\mathscr{P}_{\tt{max}}$), we can assume that $\gamma_1 = \gamma_2 =1$.
In order to prove the claim, we have to construct a non-zero map 
$\varepsilon \in M_G''(\pi_1, \pi_2)$ with support 
$\mathscr{P}_2g\mathscr{P}_1$. This is done as follows: We have two maps
\begin{equation*}
s: V_{\pi_1}\hookrightarrow V_{
\begin{bf}i\end{bf}_{\mathcal{M}_1}^{\mathcal{M}_{\tt{max}}}(\pi_1)} \qquad
t: V_{
\begin{bf}i\end{bf}_{\mathcal{M}_2}^{\mathcal{M}_{\tt{max}}}(\pi_2)}
\twoheadrightarrow V_{\pi_2}
\end{equation*}
(where, as usual, $V_\pi$ denotes the space underlying $\pi$) given by
$s(v) = \xi_v$, where $\xi_v$ has support $\mathscr{P}_1$ and maps 
$p$ to $pv$. $t$ is defined by sending a $\xi$ to $\xi(1)$. $s$ intertwines
with $\mathscr{P}_1$ and $t$ intertwines with $\mathscr{P}_2$.
Hence we can define the 
map $\varepsilon:G\rightarrow \operatorname{Hom}_G(\pi_1, \pi_2)$
supported on $\mathscr{P}_2g\mathscr{P}_1$ and characterised by sending 
$p_2gp_1$ to 
\begin{equation*}
t\circ f(p_2gp_1) \circ s = t\circ
\begin{bf}i\end{bf}_{\mathcal{M}_2}^{\mathcal{M}_{\tt{max}}}(\pi_2) (p_2)\circ
f(g) \circ 
\begin{bf}i\end{bf}_{\mathcal{M}_1}^{\mathcal{M}_{\tt{max}}}(\pi_1) (p_1)\circ
s = \pi_2(p_2)\circ t\circ f(g)\circ s \circ \pi_1(p_1).
\end{equation*}
\end{proof}\noindent
\underline{Step 4}: The strategy now is to show that the intertwining
of $M_G(\pi_1, \pi_2)$ lies inside $\mathscr{P}_2M_{\tt{max}}\mathscr{P}_1$.
As a consequence of the proposition, this will imply the claim. For this, we
use the model $M'$ and write
\begin{equation*}
\operatorname{Hom}_{\mathcal{M}_1}\left( \pi_1, 
\bigoplus_{d\in D} F^G_{\mathscr{P}_1(q)d\mathscr{P}_2(q)}(\pi_2)\right).
\end{equation*}
If there is no $d$ such that there is an $f$ in $M'$ with contribution
to the $d$th summand, there is nothing to prove. If there is such 
a $d$, we can apply Lemma \ref{ashfiowehgiwehighweihfioewhriweh} which
tells us that
\begin{equation*}
\bigoplus_J
\begin{bf}i\end{bf}_{\mathcal{M}_1}^{\mathcal{G}}(\pi_1)
\cong
\bigoplus_{J'}
\begin{bf}i\end{bf}_{\mathcal{M}_2}^{\mathcal{G}}(\pi_2)
\end{equation*}
for two finite index sets $J, J'$. This, in turn, puts us in a position
to apply Lemma \ref{askdhfwbvoiweboifwefhiowegbfiwoe} and conclude
that -- up to $\mathcal{M}_1$-isomorphism -- 
$(\mathcal{M}_1, \pi_1)$ and $(\mathcal{M}_2, \pi_2)$ are conjugated
by a simple rearrangement of blocks $t\in S_n\subset
\mathcal{G}$ (as in the 
formulation of Lemma \ref{askdhfwbvoiweboifwefhiowegbfiwoe}).
Proposition \ref{aishfiowebhgiwbeiogfbiowebfrbweorwebroiweb} then tells us
that $t$ must be contained in $\mathcal{M}_{\tt{max}}$.
We conclude 
\begin{equation*}
\begin{bf}i\end{bf}_{\mathcal{M}_1}^{\mathcal{M}_{\tt{max}}}(\pi_1)
\cong
\begin{bf}i\end{bf}_{\mathcal{M}_2}^{\mathcal{M}_{\tt{max}}}(\pi_2),
\end{equation*}
i. e. $Y$ is a quotient of 
$\begin{bf}i\end{bf}_{\mathcal{M}_1}^{\mathcal{M}_{\tt{max}}}(\pi_1)$
and we actually have to compute the intertwining of $\mathscr{H}(G, 
\mathscr{P}_1, \pi_1)$ instead of $M_G(\pi_1, \pi_2)$. It is known
that this intertwining is contained in $\mathscr{P}_1 M_{\tt{max}}\mathscr{P}_1$
(see \cite{vignéras1998induced}, Section IV.3.2-3).
\end{proof} 

\subsection{Bounds for intertwining pass over to extensions}\label{aohfoweiht08zr84z3r849tgr94g0}
First, we need a general 
\begin{lem}
In the module-category over some ring $\mathcal{R}$, 
consider two short exact sequences
\begin{equation*}
0 \rightarrow A \overset{a}{\rightarrow} B 
\overset{b}{\rightarrow} C \rightarrow 0 \qquad
\text{and} \qquad 
0 \rightarrow X \overset{x}{\rightarrow} Y 
\overset{y}{\rightarrow} Z \rightarrow 0.
\end{equation*}
If $\operatorname{Hom}_{\mathcal{R}}(U, V)=0$ for all $U\in \{A, C\}, 
V\in \{X, Z\}$, then $\operatorname{Hom}_{\mathcal{R}}(B, Y)=0$.
\end{lem}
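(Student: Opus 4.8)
The plan is a two-step diagram chase, descending first along the right-hand sequence and then along the left-hand one. Fix an arbitrary $\mathcal{R}$-module map $f\colon B\to Y$; the goal is to show $f=0$. First I would push $f$ into $Z$: consider $y\circ f\colon B\to Z$ and precompose with $a$, obtaining $y\circ f\circ a\in\operatorname{Hom}_{\mathcal{R}}(A,Z)$, which is zero by hypothesis. Hence $y\circ f$ annihilates $\operatorname{im}(a)=\ker(b)$ and therefore factors through the cokernel $C$, say $y\circ f=g\circ b$ with $g\colon C\to Z$. Since $\operatorname{Hom}_{\mathcal{R}}(C,Z)=0$ we get $g=0$, so $y\circ f=0$.

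Consequently $\operatorname{im}(f)\subseteq\ker(y)=\operatorname{im}(x)$, and because $x$ is a monomorphism, $f$ lifts uniquely to a map $f'\colon B\to X$ with $x\circ f'=f$. Now I would run exactly the same argument one floor lower: $f'\circ a\in\operatorname{Hom}_{\mathcal{R}}(A,X)=0$, so $f'$ kills $\ker(b)$ and factors as $f'=h\circ b$ for some $h\colon C\to X$; since $\operatorname{Hom}_{\mathcal{R}}(C,X)=0$ we obtain $h=0$, hence $f'=0$ and finally $f=x\circ f'=0$. As $f$ was arbitrary, $\operatorname{Hom}_{\mathcal{R}}(B,Y)=0$.

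There is no genuine obstacle here; the only ingredients are the universal property of the cokernel $b\colon B\to C$ (to factor a map vanishing on $\ker b$), the fact that $\ker y=\operatorname{im} x$ with $x$ injective (to lift a map landing in $\ker y$), and the four vanishing hypotheses, used in the order "$Z$ first, then $X$". Note that precisely the vanishing of $\operatorname{Hom}_{\mathcal{R}}(A,Z)$, $\operatorname{Hom}_{\mathcal{R}}(C,Z)$, $\operatorname{Hom}_{\mathcal{R}}(A,X)$ and $\operatorname{Hom}_{\mathcal{R}}(C,X)$ enters, matching the stated assumption $\operatorname{Hom}_{\mathcal{R}}(U,V)=0$ for $U\in\{A,C\}$, $V\in\{X,Z\}$.
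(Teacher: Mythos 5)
Your proof is correct and is essentially the same elementary diagram chase as the paper's, just run in transposed order: you first kill $y\circ f$ using the vanishing of $\operatorname{Hom}_{\mathcal{R}}(A,Z)$ and $\operatorname{Hom}_{\mathcal{R}}(C,Z)$ and then lift $f$ into $X$, whereas the paper first shows $f\circ a=0$ (using the vanishing against $Z$ and $X$) and then factors $f$ through $C$. Your direct formulation via the universal properties of the cokernel $b$ and the kernel $\operatorname{im}(x)=\ker(y)$ is, if anything, a bit cleaner than the paper's argument by contradiction, but the content is the same.
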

\begin{proof}
Assume we have a non-zero $f\in \operatorname{Hom}_{\mathcal{R}}(B, Y)$.\\
The first observation is that $f\circ a = 0$: Assume, this is not the case, 
i. e. there is an $\alpha \in A$ such that $f(a(\alpha))\neq 0$. Then in any
case $y(f(a(\alpha)))$ must vanish, because otherwise we would have produced
a non-zero arrow $A\rightarrow Z$. So $f(a(\alpha))$ lies in the image 
of $x$. This is true for any $\alpha'$ (with $f(a(\alpha'))$ zero or not), 
hence $f\circ a$ restricts
to a map $A\rightarrow \operatorname{im}(x) \cong X$. By the 
assumption on $\alpha$ this map is non-zero and this is a contradiction.\\
So now we can talk about the following diagram
\begin{equation*}
\xymatrix{
0 \ar[r] &A \ar[r]^{a}\ar[d]_{0} &B\ar[r]^{b}\ar[d]^{f} &C\ar[r]
\ar@{.>}[d]^{g} &0\\
0 \ar[r] &X \ar[r]_{x} &Y\ar[r]_{y} &Z\ar[r] &0
}
\end{equation*}
and the $ABXY$-square is commuting. We will now construct a 
$g$ making $BCYZ$ commute:\\
Let $\gamma \in C$. Then we can take a pre-image $b^{-1}(\gamma)$ and
consider $f(b^{-1}(\gamma))$. The fact that the left square commutes 
implies that this element in $Y$ is independent of the choice of 
the pre-image. Then define $g(\gamma)$ as $y(f(b^{-1}(\gamma)))$. It is 
straightforward to see that this assignment is $\mathcal{R}$-equivariant.
By construction, the $BCYZ$-square commutes. By assumption, $g = 0$. Hence
we are in the situation of the following commuting diagram
\begin{equation*}
\xymatrix{
0 \ar[r] &A \ar[r]^{a}\ar[d]_{0} &B\ar[r]^{b}\ar[d]^{f} &C\ar[r]
\ar[d]^{0} &0\\
0 \ar[r] &X \ar[r]_{x} &Y\ar[r]_{y} &Z\ar[r] &0
}
\end{equation*}
and we still assume $f\neq 0$.\\
Return to the assignment $\gamma \mapsto f(b^{-1}(\gamma))$. This indeed
is a well-defined $\mathcal{R}$-homomorphism $h:C\rightarrow Y$.
As $f$ is assumed to be non-zero, so is $h$. Moreover, 
$\operatorname{im}(h) = \operatorname{im}(f)$. But 
$\operatorname{im}(f) \subset \operatorname{ker}(y) =
\operatorname{im}(x)\cong X$. Hence we have produced a non-zero map
$C\rightarrow X$ which gives the final 
contradiction.
\end{proof}
Now let $M$ be a (finite-length) module and $\mathcal{Q}$ some 
set of modules.
We say that $M$ decomposes into $\mathcal{Q}$ if there is a sequence of nested
submodules
\begin{equation*}
M_1 \subset M_2 \subset \ldots \subset M_n = M
\end{equation*}
with $M_1$ and all the $M_i/M_{i-1}$ for $2\leq q \leq n$ 
being isomorphic to members of $\mathcal{Q}$
(and we
remark that we do not assume that all members of $\mathcal{Q}$ are
irreducible). Define the number $\operatorname{length}_{\mathcal{Q}}(M)$ 
to be the smallest $n\in\mathbb{N}$ 
such that a nested sequence as above
exists. The zero-module has $\mathcal{Q}$-length $0$ for any $\mathcal{Q}$.
If $\mathcal{V}$ is another set of modules, we write $\operatorname{Hom}(
\mathcal{Q}, \mathcal{V}) = 0$ if
$\operatorname{Hom}({Q}, {V}) = 0$ for any choice $Q\in\mathcal{Q},
V\in\mathcal{V}$. 
\begin{cor} \label{asdfhjkashfjaskdhf}
Let $M, N$ in $\mathcal{R}\!-\!\textbf{Mod}$ be finite-length
such that $M$ decomposes into
$\mathcal{Q}$ and $N$ into $\mathcal{V}$. 
Then $\operatorname{Hom}(\mathcal{Q}, \mathcal{V})= 0$ implies
$\operatorname{Hom}(M, N)= 0$.
\end{cor}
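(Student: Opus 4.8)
The plan is to prove Corollary~\ref{asdfhjkashfjaskdhf} by induction on $n+m$, where $n \coloneqq \operatorname{length}_{\mathcal{Q}}(M)$ and $m \coloneqq \operatorname{length}_{\mathcal{V}}(N)$, using the lemma proved just above as the only non-formal ingredient. For the base cases: if $n=0$ or $m=0$ then $M=0$ or $N=0$ and there is nothing to prove; if $n=m=1$ then $M$ is isomorphic to some $Q\in\mathcal{Q}$ and $N$ to some $V\in\mathcal{V}$, so $\operatorname{Hom}_{\mathcal{R}}(M,N)=0$ is exactly the hypothesis $\operatorname{Hom}(\mathcal{Q},\mathcal{V})=0$.

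For the inductive step, assume $n,m\geq 1$ and $n+m\geq 3$, so that $n\geq 2$ or $m\geq 2$. Suppose first $n\geq 2$. Fixing a nested chain $M_1\subset\dots\subset M_n=M$ witnessing $\operatorname{length}_{\mathcal{Q}}(M)=n$, we obtain a short exact sequence $0\to M_{n-1}\to M\to M/M_{n-1}\to 0$ in which $\operatorname{length}_{\mathcal{Q}}(M_{n-1})\leq n-1$ and $M/M_{n-1}$ is isomorphic to a member of $\mathcal{Q}$, hence has $\mathcal{Q}$-length $1$. Pair this with the trivial short exact sequence $0\to N\overset{\operatorname{id}}{\to} N\to 0\to 0$ and apply the preceding lemma with $A=M_{n-1}$, $B=M$, $C=M/M_{n-1}$ and $X=N$, $Y=N$, $Z=0$. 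Of its four vanishing hypotheses, the two involving the zero module are automatic, while $\operatorname{Hom}_{\mathcal{R}}(M_{n-1},N)=0$ and $\operatorname{Hom}_{\mathcal{R}}(M/M_{n-1},N)=0$ follow from the inductive hypothesis, since $(n-1)+m<n+m$ and (using $n\geq 2$) $1+m<n+m$. The lemma then yields $\operatorname{Hom}_{\mathcal{R}}(M,N)=\operatorname{Hom}_{\mathcal{R}}(B,Y)=0$. If instead $n=1$ and $m\geq 2$, one argues symmetrically: keep the trivial sequence $0\to M\overset{\operatorname{id}}{\to} M\to 0\to 0$ in the first coordinate and split $N$ as $0\to N_{m-1}\to N\to N/N_{m-1}\to 0$ from a chain witnessing $\operatorname{length}_{\mathcal{V}}(N)=m$, again checking the four Hom-groups against the inductive hypothesis.

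I do not expect a genuine obstacle here: all the content sits in the preceding lemma, and the only thing to watch is the bookkeeping — namely that pairing a two-step filtration on one side with the trivial one-step filtration on the other makes each of the four Hom-groups occurring in that lemma either visibly zero or a strictly smaller instance of the statement being proved, so the induction closes.
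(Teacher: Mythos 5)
Your proof is correct and follows essentially the same route as the paper: an induction on a length parameter that reduces everything to the preceding lemma. The only (immaterial) difference is that the paper peels a $\mathcal{Q}$-quotient off $M$ and a $\mathcal{V}$-quotient off $N$ simultaneously and inducts on $\max\{\operatorname{length}_{\mathcal{Q}}(M),\operatorname{length}_{\mathcal{V}}(N)\}$, whereas you split only one module per step (pairing with the trivial short exact sequence on the other side) and induct on the sum, which if anything handles the borderline cases a bit more carefully.
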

\begin{proof}
Induction on $d_{M, N} = \operatorname{max}\{
\operatorname{length}_{\mathcal{Q}}(M),
\operatorname{length}_{\mathcal{V}}(N)\}$: If $d_{M, N}=0$, the statement
is obviously true; So let $d_{M, N}>1$ and assume the statement is
known for all $M', N'$ with $d_{M', N'}<d_{M, N}$.
(Additionally, assume that neither $M$ nor $N$ are zero; in that case 
the claim is true anyway.)
Then we can embed $M$ and $N$ into sequences
\begin{equation*}
0 \rightarrow M' \rightarrow M\rightarrow Q \rightarrow 0 \qquad \text{and}
\qquad
0 \rightarrow N' \rightarrow N\rightarrow V \rightarrow 0
\end{equation*}
where $Q\in \mathcal{Q}, V\in\mathcal{V}$, $M'$ decomposes into $\mathcal{Q}$, 
$N'$ decomposes into $\mathcal{V}$ and
$d_{M', N'}<d_{M, N}$. The proof now follows from the above lemma.
\end{proof}
We can use this machinery to prove
\begin{thm}\label{sdoihfoiwhbgfiwebhfgbwei}
The super-Hecke algebra $\mathscr{H}(G, \mathscr{P}_{\tt{max}}, \tilde{\rho})$
has intertwining contained in $\mathscr{P}_{\tt{max}}M_{\tt{max}}
\mathscr{P}_{\tt{max}}$.
\end{thm}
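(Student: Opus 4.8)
The plan is to deduce the statement from Theorem~\ref{shvbweogfbowehofwehofhgowehgfowbroubhwoegboiwe}, which already controls the intertwining of the irreducible building blocks $X,Y\in\Psi$, by pushing it through the filtration machinery of Section~\ref{aohfoweiht08zr84z3r849tgr94g0}. Since $\mathscr{P}_{\tt{max}}$ is open, $\operatorname{ind}^G_{\mathscr{P}_{\tt{max}}}(\tilde{\rho})$ is a compactly (hence parahorically) induced representation and $\mathscr{H}(G,\mathscr{P}_{\tt{max}},\tilde{\rho})=\operatorname{End}_G\bigl(\operatorname{ind}^G_{\mathscr{P}_{\tt{max}}}(\tilde{\rho})\bigr)$ may be realized, as in Section~\ref{asfhoiwgiowebiowehfiowehfio}, as the function model $M_G''(\tilde{\rho},\tilde{\rho})$ consisting of $f\colon G\to\operatorname{Hom}_R(V_{\tilde{\rho}},V_{\tilde{\rho}})$ with $f(p x q)=\tilde{\rho}(p)f(x)\tilde{\rho}(q)$ and finite support modulo $\mathscr{P}_{\tt{max}}\times\mathscr{P}_{\tt{max}}$. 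The assertion is then that every such $f$ is supported on $\mathscr{P}_{\tt{max}}M_{\tt{max}}\mathscr{P}_{\tt{max}}$; equivalently, for each $x\in G$ with $\mathscr{P}_{\tt{max}}x\mathscr{P}_{\tt{max}}\not\subset\mathscr{P}_{\tt{max}}M_{\tt{max}}\mathscr{P}_{\tt{max}}$ one must show $\mathscr{P}_{\tt{max}}x\mathscr{P}_{\tt{max}}\notin\mathcal{I}_G(\tilde{\rho},\tilde{\rho})$, which by the Observation of Section~\ref{asfhoiwgiowebiowehfiowehfio} is the same as $\operatorname{Hom}_{H_x}(\tilde{\rho},\tilde{\rho}^{x})=0$, where $H_x\coloneqq\mathscr{P}_{\tt{max}}\cap x\mathscr{P}_{\tt{max}}x^{-1}$.

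So I would fix such an $x$ and first extract the irreducible case. For any $X,Y\in\Psi$, Theorem~\ref{shvbweogfbowehofwehofhgowehgfowbroubhwoegboiwe} says the intertwining of $\operatorname{Hom}_G\bigl(\textit{i}_{\mathcal{M}_{\tt{max}}}^G(X),\textit{i}_{\mathcal{M}_{\tt{max}}}^G(Y)\bigr)$ is contained in $\mathscr{P}_{\tt{max}}M_{\tt{max}}\mathscr{P}_{\tt{max}}$, hence does not contain $\mathscr{P}_{\tt{max}}x\mathscr{P}_{\tt{max}}$. Writing $\tilde{X}^{\ast},\tilde{Y}^{\ast}$ for the inflations to $\mathscr{P}_{\tt{max}}$ of the irreducibles $X,Y$ (so $\textit{i}_{\mathcal{M}_{\tt{max}}}^G(X)=\operatorname{ind}^G_{\mathscr{P}_{\tt{max}}}(\tilde{X}^{\ast})$), the Observation of Section~\ref{asfhoiwgiowebiowehfiowehfio} turns this into
\begin{equation*}
\operatorname{Hom}_{H_x}\bigl(\tilde{X}^{\ast},(\tilde{Y}^{\ast})^{x}\bigr)=0\qquad\text{for all }X,Y\in\Psi.
\end{equation*}

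The next step is to propagate this to $\tilde{\rho}$. By Proposition~\ref{shdbgbewrgbwrbigbwr}, Lemma~\ref{jklsdhfsdhgiuoahgf} and Corollary~\ref{aisfhwebofbweobfuiehuigbjkbjjiozjbasudgifiibvuvu97u89hb}, $\tilde{\rho}$ is a finite-length $\mathscr{P}_{\tt{max}}$-representation whose composition factors all lie in $\{\tilde{X}^{\ast}:X\in\Psi\}$. Since restriction to the open subgroup $H_x$ is exact, $\operatorname{res}^{\mathscr{P}_{\tt{max}}}_{H_x}\tilde{\rho}$ decomposes, in the sense of Section~\ref{aohfoweiht08zr84z3r849tgr94g0}, into the finite set $\mathcal{Q}=\{\operatorname{res}^{\mathscr{P}_{\tt{max}}}_{H_x}\tilde{X}^{\ast}:X\in\Psi\}$; likewise, conjugation by $x$ and then restriction to $H_x$ being exact, $\operatorname{res}_{H_x}(\tilde{\rho}^{x})$ decomposes into $\mathcal{V}=\{\operatorname{res}_{H_x}(\tilde{Y}^{\ast})^{x}:Y\in\Psi\}$. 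The vanishing established above says precisely $\operatorname{Hom}_{H_x}(\mathcal{Q},\mathcal{V})=0$, so Corollary~\ref{asdfhjkashfjaskdhf}, applied to $\mathscr{H}(H_x)$-modules, gives $\operatorname{Hom}_{H_x}(\tilde{\rho},\tilde{\rho}^{x})=0$. Inserting this back into the Observation yields $\mathscr{P}_{\tt{max}}x\mathscr{P}_{\tt{max}}\notin\mathcal{I}_G(\tilde{\rho},\tilde{\rho})$; as $x$ was an arbitrary element outside $\mathscr{P}_{\tt{max}}M_{\tt{max}}\mathscr{P}_{\tt{max}}$, the theorem follows.

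I expect the obstacles to be bookkeeping rather than conceptual: one has to check that the model $M_G''$ and the Observation's criterion apply verbatim to the reducible (but finite-length, hence finitely generated) representation $\tilde{\rho}$, and that the decomposition hypothesis of Corollary~\ref{asdfhjkashfjaskdhf} really survives the exact functors $\operatorname{res}_{H_x}$ and $(-)^{x}$ — which is exactly the reason that corollary was stated for arbitrary, not necessarily irreducible, collections of modules, so that the possibly reducible restrictions $\operatorname{res}_{H_x}\tilde{X}^{\ast}$ are admissible members of $\mathcal{Q}$ and $\mathcal{V}$.
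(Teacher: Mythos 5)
Your proposal is correct and follows essentially the same route as the paper: fix $g\notin \mathscr{P}_{\tt{max}}M_{\tt{max}}\mathscr{P}_{\tt{max}}$, use Theorem \ref{shvbweogfbowehofwehofhgowehgfowbroubhwoegboiwe} to kill the intertwining of each pair from $\Psi$ at $g$, and then lift the vanishing to $\tilde{\rho}$ through the filtration machinery of Corollary \ref{asdfhjkashfjaskdhf}, translated back via the intertwining criterion of Section \ref{asfhoiwgiowebiowehfiowehfio}. The only (cosmetic) difference is that you run Corollary \ref{asdfhjkashfjaskdhf} on the restrictions of the inflations $\tilde{X}^{\ast}$ to $\mathscr{P}_{\tt{max}}\cap g\mathscr{P}_{\tt{max}}g^{-1}$, while the paper phrases the same decomposition in terms of the induced representations $\operatorname{ind}_{\mathscr{P}_{\tt{max}}}^G(X)$; your bookkeeping is, if anything, the more precise match to the Hom-criterion.
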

\begin{proof}
Let $g\in G - \mathscr{P}_{\tt{max}}M_{\tt{max}}
\mathscr{P}_{\tt{max}}$. The $\mathscr{P}_{\tt{max}}
\cap g\mathscr{P}_{\tt{max}}g^{-1}$-representation \\
$\operatorname{ind}_{\mathscr{P}_{\tt{max}}}^G(\tilde{\rho})$ decomposes
into $\mathcal{Q}= \{\operatorname{ind}_{\mathscr{P}_{\tt{max}}}^G(X)|X\in \Psi\}$
and the $\mathscr{P}_{\tt{max}}
\cap g\mathscr{P}_{\tt{max}}g^{-1}$-representation
$\operatorname{ind}_{\mathscr{P}_{\tt{max}}}^G(\tilde{\rho})^g$ decomposes
into $\mathcal{Q}^g= \{\operatorname{ind}_{\mathscr{P}_{\tt{max}}}^G(X)^g
|X\in \Psi\}$. By Theorem
\ref{shvbweogfbowehofwehofhgowehgfowbroubhwoegboiwe}, 
\begin{equation*}
\operatorname{Hom}_{\mathscr{P}_{\tt{max}}
\cap g\mathscr{P}_{\tt{max}}g^{-1}}(\mathcal{Q},\mathcal{Q}^g)=0.
\end{equation*}
The claim therefore follows from 
Corollary \ref{asdfhjkashfjaskdhf}.
\end{proof}

\section{Factorisation of the Hecke algebra of a supercover}\label{aoihf0eh803t90uwegf8aiohoihg89327zrgt8nh40j8}
In the last section, we showed (Theorem \ref{sdoihfoiwhbgfiwebhfgbwei})
that the subspace 
\begin{equation*}
\mathscr{H}(\mathscr{P}_{\tt{max}}M_{\tt{max}}
\mathscr{P}_{\tt{max}}, \mathscr{P}_{\tt{max}}, \tilde{\rho}) \subset
\mathscr{H}(G, \mathscr{P}_{\tt{max}}, \tilde{\rho})
\end{equation*}
of functions with support in $\mathscr{P}_{\tt{max}}M_{\tt{max}}
\mathscr{P}_{\tt{max}}$ is actually all of
$\mathscr{H}(G, \mathscr{P}_{\tt{max}}, \tilde{\rho})$, hence 
it is an $R$-algebra.
This allows us to use Proposition II.8 and Proposition II.4 of 
\cite{vignéras1998induced}, which tells us that there is an isomorphism of
algebras
\begin{equation*}
\mathscr{H}(G, \mathscr{P}_{\tt{max}}, \tilde{\rho}) \cong
\mathscr{H}(M_{\tt{max}}, \mathscr{P}_{\tt{max}}^{\circ}, 
\tilde{\rho}^{\circ}),
\end{equation*}
where $\mathscr{P}_{\tt{max}}^{\circ} = \mathscr{P}_{\tt{max}}\cap M_{\tt{max}}$
and $\tilde{\rho}^{\circ} = 
\tilde{\rho}|\mathscr{P}_{\tt{max}}^{\circ}$. Using this, we can show
\begin{thm} \label{saoihfoih89thg2984gt98234rgh92trg5942rvgb9uvb9q87gwtgb4}
There are numbers $u_i\in \mathbb{N}$ such that
\begin{equation*}
\mathscr{H}(G, \mathscr{P}_{\tt{max}}, \tilde{\rho}) \cong
\bigotimes_{i\in I} \mathscr{H}\bigl(G_{n_im_i}, \mathscr{K}_{n_im_i}, 
\oplus_{u_i\text{copies }}\,\tilde{\rho}_i\bigr).
\end{equation*}
The $i$-th tensor factor is Morita-equivalent to
\begin{equation*}
\mathscr{H}(G_{n_im_i}, \mathscr{K}_{n_im_i}, 
\tilde{\rho}_i).
\end{equation*}
\end{thm}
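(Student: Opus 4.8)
The plan is to build on the isomorphism of $R$-algebras $\mathscr{H}(G, \mathscr{P}_{\tt{max}}, \tilde{\rho}) \cong \mathscr{H}(M_{\tt{max}}, \mathscr{P}_{\tt{max}}^{\circ}, \tilde{\rho}^{\circ})$ established just above (with $\mathscr{P}_{\tt{max}}^{\circ} = \mathscr{P}_{\tt{max}}\cap M_{\tt{max}}$ and $\tilde{\rho}^{\circ} = \tilde{\rho}|\mathscr{P}_{\tt{max}}^{\circ}$) and to factor the right-hand side along the block decomposition $M_{\tt{max}} = \prod_{i=1}^{k} G_{n_im_i}$. Throughout, $\mathscr{H}(M_{\tt{max}}, \mathscr{P}_{\tt{max}}^{\circ}, \tilde{\rho}^{\circ})$ is read, as always, as $\operatorname{End}_{M_{\tt{max}}}\bigl(\operatorname{ind}_{\mathscr{P}_{\tt{max}}^{\circ}}^{M_{\tt{max}}}(\tilde{\rho}^{\circ})\bigr)$, so the whole argument is carried out at the level of the module $\operatorname{ind}_{\mathscr{P}_{\tt{max}}^{\circ}}^{M_{\tt{max}}}(\tilde{\rho}^{\circ})$.

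First I would pin down the two ingredients on the right. A direct matrix computation shows $\mathscr{P}_{\tt{max}}^{\circ} = \prod_{i=1}^{k}\mathscr{K}_{n_im_i}$, and that under this identification the reduction map $\mathscr{P}_{\tt{max}}^{\circ}\twoheadrightarrow\mathcal{M}_{\tt{max}}$ is $\prod_i f_{\mathscr{K}_{n_im_i}}$; hence $\tilde{\rho}^{\circ}$ is the inflation to $\prod_i\mathscr{K}_{n_im_i}$ of $\bigoplus_{X\in\Psi}\hat{X}$. Next, unwinding the construction of the supercover (Definition \ref{i3h802hrf8032grt892z4392876432t754t7g}), using that $\Psi$ is indexed by $\Xi = \prod_{i=1}^{k}\Xi_i$ and that $\hat{X} = \boxtimes_{i=1}^{k}\hat{P}_{i,\mu_i}$, and applying distributivity of the outer tensor product (Proposition \ref{snvoiwboivgbeifbgeiwbgvbdwoigvboweobiiojsdbvajb}.i), I would rewrite $\bigoplus_{(\mu_1,\ldots,\mu_k)\in\prod_i\Xi_i}\boxtimes_i\tilde{P}_{i,\mu_i}$ as an outer tensor product $\boxtimes_{i=1}^{k}\sigma_i$ of $\mathscr{K}_{n_im_i}$-representations in which each $\sigma_i$ is a direct sum of $u_i$ copies of the simple-type supercover $\tilde{\rho}_i = \bigoplus_{\mu_i\in\Xi_i}\tilde{P}_{i,\mu_i}$ (the bookkeeping in fact allows $u_i = 1$, but only the existence of the $u_i$ is asserted).

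With $\tilde{\rho}^{\circ}\cong\boxtimes_{i}\sigma_i$ in hand, Proposition \ref{wihfgoiwbgwighni4ehbio4hbgoihno2i4hng42} pulls the (open) induction through the outer tensor product:
\[
\operatorname{ind}_{\mathscr{P}_{\tt{max}}^{\circ}}^{M_{\tt{max}}}(\tilde{\rho}^{\circ})\;\cong\;\boxtimes_{i=1}^{k}\operatorname{ind}_{\mathscr{K}_{n_im_i}}^{G_{n_im_i}}(\sigma_i).
\]
Each factor is finitely generated and projective by Corollary \ref{aisfhwebofbweobfuiehuigbjkbjjiozjbasudgifiibvuvu97u89hb} and Proposition \ref{aioshoihbfohgbweoifgboiubosdbfoasbf}.iii, so Lemma \ref{eiheg509hw8hfw2h4gf8h2498eghf84e29ghf8924htf} identifies the endomorphism ring of the left-hand side with $\bigotimes_{i=1}^k\operatorname{End}_{G_{n_im_i}}\bigl(\operatorname{ind}_{\mathscr{K}_{n_im_i}}^{G_{n_im_i}}(\sigma_i)\bigr)$; since $\sigma_i\cong\oplus_{u_i\text{ copies}}\tilde{\rho}_i$, this is exactly $\bigotimes_{i\in I}\mathscr{H}\bigl(G_{n_im_i},\mathscr{K}_{n_im_i},\oplus_{u_i\text{ copies}}\tilde{\rho}_i\bigr)$, which is the claimed factorisation. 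For the final sentence, Proposition \ref{aioshoihbfohgbweoifgboiubosdbfoasbf}.i gives $\operatorname{ind}_{\mathscr{K}_{n_im_i}}^{G_{n_im_i}}\bigl(\oplus_{u_i}\tilde{\rho}_i\bigr)\cong\oplus_{u_i}\operatorname{ind}_{\mathscr{K}_{n_im_i}}^{G_{n_im_i}}(\tilde{\rho}_i)$, so its endomorphism ring is the matrix algebra $\mathrm{M}_{u_i}\bigl(\mathscr{H}(G_{n_im_i},\mathscr{K}_{n_im_i},\tilde{\rho}_i)\bigr)$, which is Morita-equivalent to $\mathscr{H}(G_{n_im_i},\mathscr{K}_{n_im_i},\tilde{\rho}_i)$.

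The main obstacle I expect is the second step: correctly identifying $\tilde{\rho}^{\circ}$ as an outer tensor product. This requires checking that the abstract algebra isomorphism coming from \cite{vign�ras1998induced} (Propositions II.8, II.4) is compatible with the $\operatorname{End}$-interpretation of $\mathscr{H}(M_{\tt{max}}, \mathscr{P}_{\tt{max}}^{\circ}, \tilde{\rho}^{\circ})$, tracking the precise restriction of $\tilde{\rho}$ to $\mathscr{P}_{\tt{max}}^{\circ}$ through the inflation/projective-cover construction, and matching the index set $\Psi$ of the global supercover with $\prod_i\Xi_i$. Once this bookkeeping is in place, the remaining steps are formal consequences of the outer-tensor-product machinery of Section \ref{weivoiwebhgiohwrohguagberfbwisdaifvpbwgf}.
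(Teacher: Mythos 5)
Your proposal follows essentially the same route as the paper: it starts from the isomorphism $\mathscr{H}(G, \mathscr{P}_{\tt{max}}, \tilde{\rho}) \cong \mathscr{H}(M_{\tt{max}}, \mathscr{P}_{\tt{max}}^{\circ}, \tilde{\rho}^{\circ})$, identifies $\mathscr{P}_{\tt{max}}^{\circ} = \prod_i \mathscr{K}_{n_im_i}$ and $\tilde{\rho}^{\circ} \cong \boxtimes_i\bigl(\oplus_{u_i}\tilde{\rho}_i\bigr)$, and then applies Proposition \ref{wihfgoiwbgwighni4ehbio4hbgoihno2i4hng42} and Lemma \ref{eiheg509hw8hfw2h4gf8h2498eghf84e29ghf8924htf}, finishing the Morita claim via the matrix-algebra identification, exactly as the paper does (which moreover records the explicit value $u_i = \#\Xi/\#\Xi_i$). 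The argument is correct.
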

\begin{proof}
Let's unravel the definitions:
\begin{itemize}
\item $M_{\tt{max}} = \prod_I G_{n_im_i}$;
\item $\mathscr{P}_{\tt{max}}^{\circ} = \prod_I {\mathscr{K}}_{n_im_i}$;
\item $\tilde{\rho}^{\circ} = \boxtimes_I \bigl( 
\oplus_{u_i \text{copies }} \tilde{\rho}_i\bigr)$ with
$u_i = \frac{\# \Xi}{\# \Xi_i}$.
\end{itemize}
The first claim follows now from 
applying 
Proposition \ref{wihfgoiwbgwighni4ehbio4hbgoihno2i4hng42}
followed by 
Lemma \ref{eiheg509hw8hfw2h4gf8h2498eghf84e29ghf8924htf}.
For the second claim, we see that the $i$-th factor is equal to
\begin{equation*}
\operatorname{End}_{G_{n_im_i}}\Bigl( \oplus_{u_i \text{copies }}
\operatorname{ind}_{\mathscr{K}_{n_im_i}}^{G_{n_im_i}}
(\tilde{\rho}_i)\Bigr)	
\cong \mathbb{M}_{u_i\times u_i} \Bigl(
\mathscr{H}(G_{n_im_i}, \mathscr{K}_{n_im_i}, 
\tilde{\rho}_i)\Bigr)
\end{equation*}
and this ring is Morita-equivalent to 
$\mathscr{H}(G_{n_im_i}, \mathscr{K}_{n_im_i}, 
\tilde{\rho}_i)$.
\end{proof}

\section{A worked example} \label{ewihfih08ghtf832gbtfug32b984g732g4rq09t}
We conclude with working out the example
\begin{itemize}
\item $G= \operatorname{GL}_2(\mathbb{Q}_p)$;
\item $M = T$ and $\pi = \pi_1 \boxtimes \pi_2$, where $\pi_i$ are
level-$0$ characters such that $\pi_1 / \pi_2$ is ramified;
\item Let $\chi_i$ be the restriction of $\pi_i$ to $\mathbb{Z}_p^{\times}$.
Then $\chi_i$ is inflated from a character 
$\overline{\chi}_i$ of
$\mathbb{Z}_p^{\times}/(1+\mathfrak{P})
\cong k^{\times}$. The associated type to $(M, \pi)$ is $(I, \chi)$,
where $I$ is the Iwahori-subgroup of $G$ and $\chi$ is inflated from
$\overline{\chi}_1\boxtimes \overline{\chi}_2$.

\end{itemize}
Decompose $s = k^{\times}$ as $s = s_{\ell}\times s^{(\ell)}$, where $
s_{\ell}$ is an $\ell$-group and the order of 
$s^{(\ell)}$ is prime to $\ell$. 
We also set $\mathcal{T}= s\times s = \mathcal{T}_{\ell} \times
\mathcal{T}^{(\ell)} = (s_{\ell}\times s_{\ell}) \times (s^{(\ell)}
\times s^{(\ell)})$. If $\theta$ is the projection $\mathbb{Z}_p^{\times}\twoheadrightarrow 
k^{\times}$,
denote by $(\mathbb{Z}_p^{\times})^{(\ell)}$ the pre-image of $s^{(\ell)}$
under $\theta$. This gives rise to a subgroup
\begin{equation*}
I^{(\ell)} = \begin{pmatrix}
(\mathbb{Z}_p^{\times})^{(\ell)} & \mathbb{Z}_p \\
\mathfrak{P} & (\mathbb{Z}_p^{\times})^{(\ell)}
\end{pmatrix}
\end{equation*}
of $I$. We have
\begin{itemize}
\item $I/I(1) = \mathcal{T}$;
\item $I^{(\ell)}/I(1) = \mathcal{T}^{(\ell)}$;
\item $I/I^{(\ell)} = \mathcal{T}_{\ell}$.
\end{itemize}
Inflation among $I(1)$ defines two functors
\begin{eqnarray*}
\operatorname{infl}_{\mathcal{T}}^{I}: 
\mathcal{T}\!-\!\textbf{Mod}\rightarrow I\!-\!\textbf{Mod};\quad\\
\operatorname{infl}_{\mathcal{T}^{(\ell)}}^{I^{(\ell)}}:
\mathcal{T}^{(\ell)}\!-\!\textbf{Mod}
\rightarrow I^{(\ell)}\!-\!\textbf{Mod}.
\end{eqnarray*}

\begin{prop}
The functors $\operatorname{ind}_{I^{(\ell)}}^I\circ 
\operatorname{infl}_{\mathcal{T}^{(\ell)}}^{I^{(\ell)}}$ and
$\operatorname{infl}_{\mathcal{T}}^{I} \circ
\operatorname{ind}_{\mathcal{T}^{(\ell)}}^{\mathcal{T}}$ are isomorphic.
\end{prop}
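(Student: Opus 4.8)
The plan is to construct an explicit natural isomorphism between the two functors using the function-space model of induction. First I would record the two structural facts that make everything work: $I(1)$ is normal in $I$ (it is the kernel of the map $I\twoheadrightarrow\mathcal{T}$) and is contained in $I^{(\ell)}$, and $I^{(\ell)}$ is open in the compact group $I$ because it contains the open subgroup $I(1)$. Hence $\operatorname{ind}_{I^{(\ell)}}^I$ is induction over the finitely many cosets in $I/I^{(\ell)}=\mathcal{T}_{\ell}$, with no smoothness issue — any function on $I$ lying in the naive induced space is already fixed under right translation by the open normal subgroup $I(1)$, since $I(1)$ acts trivially after inflation and $g^{-1}I(1)g=I(1)$ for all $g$ — and likewise $\operatorname{ind}_{\mathcal{T}^{(\ell)}}^{\mathcal{T}}$ is ordinary induction of representations of finite groups.

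Now fix a $\mathcal{T}^{(\ell)}$-representation $W$ and let $q\colon I\twoheadrightarrow\mathcal{T}$ be the quotient map, which restricts to $q\colon I^{(\ell)}\twoheadrightarrow\mathcal{T}^{(\ell)}$. A vector of $\operatorname{ind}_{I^{(\ell)}}^I\bigl(\operatorname{infl}_{\mathcal{T}^{(\ell)}}^{I^{(\ell)}}W\bigr)$ is a function $f\colon I\to W$ with $f(hx)=q(h)f(x)$ for all $h\in I^{(\ell)}$ and $x\in I$; applying this with $h\in I(1)$, which acts trivially on $W$, shows $f$ is left $I(1)$-invariant, so $f=\bar f\circ q$ for a unique $\bar f\colon\mathcal{T}\to W$. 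The equivariance condition descends to $\bar f(\bar h\bar x)=\bar h\,\bar f(\bar x)$ for $\bar h\in\mathcal{T}^{(\ell)}$, that is, $\bar f\in\operatorname{ind}_{\mathcal{T}^{(\ell)}}^{\mathcal{T}}(W)$, and $f\mapsto\bar f$ is a bijection with inverse $\bar f\mapsto\bar f\circ q$. It is $I$-equivariant because both sides carry the right-translation action and $q$ intertwines them: if $(g\cdot f)(x)=f(xg)$ then $\overline{g\cdot f}(\bar x)=\bar f(\bar x\bar g)=(\bar g\cdot\bar f)(\bar x)$, and the right-hand side is precisely the $I$-action on $\operatorname{infl}_{\mathcal{T}}^I\bigl(\operatorname{ind}_{\mathcal{T}^{(\ell)}}^{\mathcal{T}}W\bigr)$. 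This isomorphism is manifestly natural in $W$, which proves the proposition.

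I do not expect a serious obstacle; the only points requiring a little care are checking that the compactly-supported induction $\operatorname{ind}_{I^{(\ell)}}^I$ contributes no non-smooth vectors (handled by the normality of $I(1)$ above) and the bookkeeping that the support and equivariance conditions transport correctly through $q$ in both directions. As an alternative one could bypass the explicit map: by Frobenius reciprocity for the open subgroup $I^{(\ell)}\subset I$ (Proposition~\ref{aioshoihbfohgbweoifgboiubosdbfoasbf}.ii) together with the standard adjunction between inflation along $I(1)$ and taking $I(1)$-invariants, both functors are seen to be left adjoint to the single functor $V\mapsto\bigl(V^{I(1)}\bigr)\big|_{\mathcal{T}^{(\ell)}}$ on smooth $I$-representations, so the claim follows from uniqueness of adjoints.
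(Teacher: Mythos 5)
Your argument is correct and is essentially the paper's own proof: both realise the two functors as spaces of functions $I\to W$ and $\mathcal{T}\to W$ with the appropriate left-equivariance, and identify them via the quotient map $I\twoheadrightarrow\mathcal{T}=I/I(1)$ (the paper writes the inverse direction, $\varphi\mapsto\varphi\circ\theta$, and checks the same right-translation equivariance). Your additional remarks on smoothness and the alternative uniqueness-of-adjoints argument are sound but not needed beyond what the paper does.
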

\begin{proof}
Let $(\pi, V)$ be in $\mathcal{T}^{(\ell)}\!-\!\textbf{Mod}$.
$\operatorname{ind}_{I^{(\ell)}}^I\circ 
\operatorname{infl}_{\mathcal{T}^{(\ell)}}^{I^{(\ell)}}(V)$
consists of all maps
\begin{equation*}
f: I\rightarrow V \text{ such that } f(i^{(\ell)}i) = 
\pi(\overline{i^{(\ell)}})f(i)\text{ for all }i^{(\ell)}\in 
I^{(\ell)}, i\in I,
\end{equation*}
where $\overline{i^{(\ell)}} = \theta(i^{(\ell)})$ and where $i\in I$
acts by $f\mapsto f(\textvisiblespace\,i)$.
$\operatorname{infl}_{\mathcal{T}}^{I} \circ
\operatorname{ind}_{\mathcal{T}^{(\ell)}}^{\mathcal{T}}(V)$ on the other
hand consists of all maps
\begin{equation*}
\varphi: \mathcal{T}\rightarrow V \text{ such that } \varphi(t^{(\ell)}t) = 
\pi({t^{(\ell)}})\varphi(t)\text{ for all }t^{(\ell)}\in \mathcal{T}^{(\ell)},
t\in \mathcal{T},
\end{equation*}
where $i\in I$ acts by $\varphi \mapsto \varphi(\textvisiblespace\,
\overline{i})$. It is clear that the assignment $\varphi \mapsto 
\varphi\circ \theta$ gives rise to the desired isomorphism.
\end{proof}
\begin{prop}
Let $\Gamma$ be a (locally) profinite group, 
$H$ a normal compact subgroup such that $\Gamma /H$ is a finite
$\ell$-group. If $\chi$ is an $H$-character which admits a continuation
to $\Gamma$, then $\operatorname{ind}_H^{\Gamma}(\chi)$ is
indecomposable. 
\end{prop}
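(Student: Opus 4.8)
The plan is to reduce to the case of the trivial character and then recognise the induced representation as the (left) regular representation of the finite $\ell$-group $\overline\Gamma \coloneqq \Gamma/H$, whose group algebra over $R$ is local. First I would observe that $H$ is automatically \emph{open} in $\Gamma$: it is compact, hence closed, and a closed subgroup of finite index in a topological group is open (its complement is a finite union of cosets, each closed). In particular $\Gamma$ itself is compact and $\operatorname{ind}_H^\Gamma$ agrees with ordinary finite induction, so no smoothness or support condition is binding. Now let $\widetilde\chi \colon \Gamma \to R^\times$ be a character extending $\chi$. The projection formula $\operatorname{ind}_H^\Gamma(\sigma)\otimes\tau \cong \operatorname{ind}_H^\Gamma(\sigma\otimes\operatorname{res}_H^\Gamma\tau)$ (applied with $\sigma=\mathbf 1_H$, $\tau=\widetilde\chi$), or equivalently the direct check that $f\mapsto \widetilde\chi^{-1}f$ is an isomorphism on the function model, gives
\begin{equation*}
\operatorname{ind}_H^\Gamma(\chi) \;\cong\; \widetilde\chi \otimes \operatorname{ind}_H^\Gamma(\mathbf 1_H)
\end{equation*}
as $\Gamma$-representations. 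Since tensoring with the one-dimensional $\widetilde\chi$ is an auto-equivalence of $\mathfrak R(\Gamma)$ it preserves indecomposability, so it suffices to treat $\chi = \mathbf 1_H$.

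Next I would identify $\operatorname{ind}_H^\Gamma(\mathbf 1_H)$ explicitly as the space of $R$-valued functions on $H\backslash\Gamma$ with $\Gamma$ acting by right translation. Because $H$ is \emph{normal}, $H\backslash\Gamma$ is the finite group $\overline\Gamma = \Gamma/H$ and the $\Gamma$-action factors through $\overline\Gamma$; thus $\operatorname{ind}_H^\Gamma(\mathbf 1_H)$ is the inflation to $\Gamma$ of the regular representation $R[\overline\Gamma]$. Consequently
\begin{equation*}
\operatorname{End}_\Gamma(\operatorname{ind}_H^\Gamma(\mathbf 1_H)) \;\cong\; \operatorname{End}_{\overline\Gamma}(R[\overline\Gamma]) \;\cong\; R[\overline\Gamma]^{\mathrm{op}} \;\cong\; R[\overline\Gamma]
\end{equation*}
as $R$-algebras (the last step via $g\mapsto g^{-1}$). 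A representation is indecomposable precisely when its endomorphism ring contains no idempotents other than $0$ and $1$, so it remains to check this for $R[\overline\Gamma]$. This is the one genuine ingredient: since $\overline\Gamma$ is a finite $\ell$-group and $R$ has characteristic $\ell$, the augmentation ideal of $R[\overline\Gamma]$ is nilpotent and coincides with the Jacobson radical, with $R[\overline\Gamma]/\operatorname{rad}\cong R$; hence $R[\overline\Gamma]$ is local and has only the trivial idempotents. Therefore $\operatorname{ind}_H^\Gamma(\mathbf 1_H)$, and hence $\operatorname{ind}_H^\Gamma(\chi)$, is indecomposable.

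The hard part is really just the classical fact that the modular group algebra of an $\ell$-group over a field of characteristic $\ell$ is local; once this is granted, everything else is the formal reduction above. The only point in that reduction that needs a little care is to carry out the endomorphism-ring computation at the level of rings, not merely of $R$-modules — this is why it is convenient to recognise $\operatorname{ind}_H^\Gamma(\mathbf 1_H)$ concretely as the regular representation of $\overline\Gamma$, rather than computing $\operatorname{Hom}$ only via Frobenius reciprocity and the Mackey decomposition of Proposition \ref{aioshoihbfohgbweoifgboiubosdbfoasbf}, which on its own would yield just the dimension $|\overline\Gamma|$.
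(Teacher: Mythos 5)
Your proof is correct and follows essentially the same route as the paper: twist by the extension $\tilde{\chi}$ to reduce to the trivial character, identify $\operatorname{ind}_H^{\Gamma}(\mathbf{1})$ with the (inflated) regular representation of the finite $\ell$-group $\Gamma/H$, and then invoke the classical fact about modular group algebras of $\ell$-groups. The only divergence is in how that last fact is justified — the paper cites Brauer theory (blocks correspond to $\ell$-regular classes, of which only the trivial one exists), while you use locality of $R[\Gamma/H]$ via the nilpotent augmentation ideal and the endomorphism-ring criterion, which if anything is the more direct route to indecomposability of the regular module.
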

\begin{proof}
Denote the continuation by $\tilde{\chi}$, then we have
$\operatorname{ind}_H^{\Gamma}(\chi) =
\tilde{\chi}\otimes \operatorname{ind}_H^{\Gamma}(1)$
by \cite{VigLivre}, I.5.2.d. 
It is an elementary observation that $\operatorname{ind}_H^{\Gamma}(1)
\cong R[H\backslash\Gamma]$, where $\gamma\in \Gamma$ acts on 
$R[H\backslash\Gamma]$ 
by multiplication
with $\gamma^{-1}$ from the right.
By Brauer theory, 
$R[H\backslash\Gamma]$ is indecomposable.
($H\backslash\Gamma$ is a finite $\ell$-group,
hence its group-algebra decomposes into indecomposable blocks, and there
is a $1$-to-$1$-correspondence between these blocks and $\ell$-regular classes.
There is only the trivial $\ell$-regular class.)
\end{proof}
\begin{lem}
$\operatorname{ind}_{(\mathbb{Z}_p^{\times})^{(\ell)}}^{
\mathbb{Z}_p^{\times}}(\chi_i)$ is the projective cover of
$\chi_i$ and $\operatorname{ind}_{\mathcal{T}^{(\ell)}}^{\mathcal{T}}(
\overline{\chi}_i)$ is the projective cover of $\overline{\chi}_i$.
\end{lem}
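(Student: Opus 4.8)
The plan is to establish the following uniform statement and then apply it twice: if $\Gamma$ is a profinite group (in particular $\Gamma$ may be finite) and $H\trianglelefteq\Gamma$ is an open normal subgroup whose pro-order is prime to $\ell$ and with $\Gamma/H$ a finite $\ell$-group, then for every $R$-character $\psi$ of $\Gamma$ the induced representation $\operatorname{ind}_H^\Gamma(\psi|_H)$ is the projective cover of $\psi$. The two assertions of the lemma are the special cases $(\Gamma,H,\psi)=(\mathbb Z_p^\times,(\mathbb Z_p^\times)^{(\ell)},\chi_i)$ and, \emph{mutatis mutandis}, $(\Gamma,H,\psi)=(\mathcal T,\mathcal T^{(\ell)},\overline\chi_i)$. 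In both situations $H$ has finite index in the (pro)finite group $\Gamma$, hence is open; the quotient $\Gamma/H$ equals $\mathcal T_\ell$ (resp.\ a Sylow $\ell$-subgroup), a finite $\ell$-group; and $H$ has pro-order prime to $\ell$ --- for $(\mathbb Z_p^\times)^{(\ell)}$ because it is an extension of the group $s^{(\ell)}$ of order prime to $\ell$ by the pro-$p$ group $1+\mathfrak P$ and $\ell\neq p$, and for $\mathcal T^{(\ell)}$ by its very definition.

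First I would record indecomposability. Since $\psi|_H$ visibly extends to the character $\psi$ of $\Gamma$, the preceding proposition applies and shows that $\operatorname{ind}_H^\Gamma(\psi|_H)$ is indecomposable. Next, projectivity: as the pro-order of $H$ is prime to $\ell$, the category $\mathfrak R(H)$ is semisimple, so $\psi|_H$ is projective in $\mathfrak R(H)$; since $H$ is open in $\Gamma$, the functor $\operatorname{ind}_H^\Gamma$ preserves projectivity by Proposition~\ref{aioshoihbfohgbweoifgboiubosdbfoasbf}.iii, whence $\operatorname{ind}_H^\Gamma(\psi|_H)$ is projective. Finally, to identify the head I would use Frobenius reciprocity for the open subgroup $H$ (Proposition~\ref{aioshoihbfohgbweoifgboiubosdbfoasbf}.ii): one has $\operatorname{Hom}_\Gamma(\operatorname{ind}_H^\Gamma(\psi|_H),\psi)\cong\operatorname{Hom}_H(\psi|_H,\psi|_H)$, which is nonzero, so there is a nonzero --- and, $\psi$ being one-dimensional hence simple, surjective --- map $\operatorname{ind}_H^\Gamma(\psi|_H)\twoheadrightarrow\psi$.

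It remains to assemble these three facts. Projective covers of finite-length objects exist in all the categories at hand (\cite{VigLivre}, A.6), and an indecomposable projective $P$ is the projective cover of its head $P/\operatorname{rad}P$, which is therefore simple; in particular $P\mapsto P/\operatorname{rad}P$ matches indecomposable projectives with simple objects. Hence $\operatorname{ind}_H^\Gamma(\psi|_H)$, being indecomposable and projective, is the projective cover of some simple object $S$; but it also surjects onto the simple object $\psi$, and any simple quotient of $P$ factors through $P/\operatorname{rad}P\cong S$, forcing $\psi\cong S$. Therefore $\operatorname{ind}_H^\Gamma(\psi|_H)$ is the projective cover of $\psi$, and specialising to the two cases above gives both halves of the lemma.

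I do not anticipate a real obstacle: once the uniform statement is set up, everything is formal, resting only on the preceding proposition, on semisimplicity of $\mathfrak R(H)$, and on the standard theory of projective covers recalled in \cite{VigLivre}, A.6. The only points demanding a little care are the verification that $(\mathbb Z_p^\times)^{(\ell)}$ indeed has pro-order prime to $\ell$ and that both inductions are genuine compact inductions from \emph{open} subgroups of (pro)finite groups --- after which the argument runs exactly as above.
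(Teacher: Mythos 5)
Your proposal is correct and follows essentially the same route as the paper: projectivity of the induced representation (via the prime-to-$\ell$ pro-order of the subgroup and the fact that compact induction from an open subgroup preserves projectives), indecomposability via the preceding proposition on inductions along finite $\ell$-quotients, a surjection onto the character by Frobenius reciprocity, and then the standard fact that an indecomposable projective surjecting onto a simple object is its projective cover (the paper cites \cite{VigLivre}, A.4, where you argue via the head). Your write-up merely supplies more of the routine verifications (normality, pro-orders, the uniform formulation) than the paper does.
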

\begin{proof}
The induced representation is projective (see also \cite{VigLivre}, I.4.6, in the
$\mathbb{Z}_p^{\times}$-case) 
and admits $\chi_i$ (resp. $\overline{\chi}_i$)
as a quotient. By the above proposition it is also clear that 
it is indecomposable. This is sufficient to conclude the statement
by \cite{VigLivre}, A.4.
\end{proof}
Now the preceding results
yield
\begin{equation*}
\mathfrak{R}^{[(M, \pi)]} \;\cong\;
\mathscr{H}(G, I^{(\ell)}, \chi|I^{(\ell)})\!-\!\textbf{Mod}
\end{equation*}
and
\begin{eqnarray*}
\mathscr{H}(G, I^{(\ell)}, \chi|I^{(\ell)}) \cong
\mathscr{H}(\mathbb{Q}_p, (\mathbb{Z}_p^{\times})^{(\ell)}, \chi_1) 
\otimes
\mathscr{H}(\mathbb{Q}_p, (\mathbb{Z}_p^{\times})^{(\ell)}, \chi_2)\qquad\\
\qquad\qquad\cong 
\bigotimes_{\text{two copies}}
R\left[\mathbb{Q}_p/ (\mathbb{Z}_p^{\times})^{(\ell)}\right],
\end{eqnarray*}
where
\begin{equation*}
R\left[\mathbb{Q}_p/ (\mathbb{Z}_p^{\times})^{(\ell)}\right]\!-\!
\textbf{Mod}
\;{\cong} \;\text{Unipotent block of }
\mathfrak{R}\bigl(\operatorname{GL}_1(F)\bigr).
\end{equation*}

\providecommand{\MR}{\relax\ifhmode\unskip\space\fi MR }
\providecommand{\MRhref}[2]{%
  \href{http://www.ams.org/mathscinet-getitem?mr=#1}{#2}
}
\providecommand{\href}[2]{#2}

\end{document}